\pgfplotsset{compat=1.16}
\newtheorem{theorem}{Theorem}
\newtheorem{lemma}{Lemma}
\theoremstyle{definition}
\theoremstyle{remark}
\newtheorem{remark}{Remark}
\newcommand{\norm}[1]{\left\lVert#1\right\rVert}
\newcommand{\C}{\mathbb{C}}
\newcommand{\R}{\mathbb{R}}
\newcommand{\Q}{\mathbb{Q}}
\newcommand{\N}{\mathbb{N}}
\newcommand{\Z}{\mathbb{Z}}
\newcommand{\SL}{\textnormal{SL}}
\begin{document}
	
	\title{Effective Counting and Spiralling of Lattice Approximates}
	\author{Nathan Hughes}
	\address{Department of Mathematics, University of Exeter, Exeter, EX4 4QF, UK}
	\email{nh477@exeter.ac.uk}
	\maketitle
	\begin{abstract}
		Given $d\geq 2$, we show that the number of approximates $\frac{1}{q}\mathbf{p}\in \Q^d$ of $\mathbf{x}\in\R^d$ satisfying $|q\mathbf{x}-\mathbf{p}|\leq cq^{-\frac{1}{d}}$ with denominator $1\leq q < T$ decays to the asymptotic term $c\text{vol}_d(B_d(0,1))\log T$ with error of order $\left(\log T\right)^{-\frac{1}{2}}\left(\log \log T\right)^\frac{3}{2}\left(\log\log\log T\right)^{\frac{1}{2}+\epsilon}$ for almost all $\mathbf{x}\in\R^d$ and for any $\epsilon >0$. Results with the same order are proven for primitive lattice approximates for all $d\geq 1$ and also for the case of linear forms and affine lattices. These results, especially in the primitive case for $d=1$, are an improvement to the results of Schmidt.
	\end{abstract} 
	\section{Introduction}
	In this paper, we show effective results for counting and spiralling of lattice approximates - namely, we give error rates for these results in Diophantine approximation. For spiralling, these error rates are new results, and new proofs are provided for the counting results, different from those provided by Schmidt \cite{schmidtMETRICALTHEOREMGEOMETRY,schmidtMetricalTheoremDiophantine1960}. In particular, for the primitive case with $d=1$ (see theorem \ref{prim}), our result improves the error rate given by Schmidt. We also give effective results for Diophantine approximation of systems of linear forms and affine lattices.
	\subsection{Approximation of Real Vectors}
	Recall the following corollary to Dirichlet's Approximation Theorem.
	\begin{theorem}[{{Dirichlet \cite{Dirichlet}}}]
		For each $d\geq 1$, there exists a constant $C=C(d)>0$ such that, for all $\mathbf{x}\in\R^d$, there exists infinitely many $(\mathbf{p},q)\in\Z^d\times \N$ satisfying
		\begin{equation*}
			\norm{q\mathbf{x}-\mathbf{p}} < C|q|^{-\frac{1}{d}}
		\end{equation*}
	\end{theorem}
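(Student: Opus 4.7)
The plan is to prove this via the classical pigeonhole argument of Dirichlet, then iterate to produce infinitely many solutions. I will work with the sup norm $\norm{\cdot}_\infty$ on $\R^d$ and convert to the Euclidean norm at the end by absorbing a factor of $\sqrt{d}$ into the constant $C$.

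First, for a fixed integer $N \geq 1$, I consider the $N^d + 1$ points $\{q\mathbf{x}\} \in [0,1)^d$, $q = 0, 1, \dots, N^d$, where $\{\cdot\}$ denotes componentwise fractional part. I partition $[0,1)^d$ into $N^d$ half-open subcubes of side length $1/N$. By pigeonhole, two indices $0 \le q_1 < q_2 \le N^d$ produce fractional parts in the same subcube, so setting $q = q_2 - q_1$ and choosing $\mathbf{p} \in \Z^d$ to be the integer vector making $q\mathbf{x} - \mathbf{p} = \{q_2\mathbf{x}\} - \{q_1\mathbf{x}\}$, I get $1 \le q \le N^d$ and $\norm{q\mathbf{x} - \mathbf{p}}_\infty \le 1/N \le q^{-1/d}$. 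Converting to the Euclidean norm then gives the bound with $C = 2\sqrt{d}$, say, leaving headroom for the strict inequality.

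Next, I iterate to obtain infinitely many solutions. Given a solution $(\mathbf{p}_k, q_k)$ with $\delta_k := \norm{q_k \mathbf{x} - \mathbf{p}_k}_\infty > 0$, I choose $N_{k+1}$ so large that $1/N_{k+1} < \delta_k$ and repeat the pigeonhole step to produce a new $(\mathbf{p}_{k+1}, q_{k+1})$ with $\norm{q_{k+1}\mathbf{x} - \mathbf{p}_{k+1}}_\infty \le 1/N_{k+1} < \delta_k$. Strict decrease of the approximation error ensures the pairs are distinct, hence the collection is infinite. The degenerate case $\delta_k = 0$ can only arise if $\mathbf{x} \in \Q^d$; in that situation I take $q_0$ to be a common denominator of the coordinates of $\mathbf{x}$ and use the trivial family $(n\mathbf{p}_0, nq_0)$, for which the left-hand side is $0$ and the strict inequality $0 < C q^{-1/d}$ holds automatically.

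There is no real obstacle here since the argument is textbook; the only points requiring care are (i) ensuring the inequality is strict rather than weak, which is handled by taking $C$ slightly larger than $\sqrt{d}$, and (ii) the rational exceptional case, which is disposed of by the trivial family above. The dependence $C = C(d)$ comes solely from the norm conversion factor $\sqrt{d}$, so the constant is completely explicit.
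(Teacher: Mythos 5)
The paper states this result as recalled classical background with a citation to Dirichlet and supplies no proof of its own, so there is nothing internal to compare against. Your argument is the standard and correct one: the pigeonhole step gives, for each $N$, a pair with $1\le q\le N^d$ and $\norm{q\mathbf{x}-\mathbf{p}}_\infty < 1/N \le q^{-1/d}$, the iteration with strictly decreasing errors (and the trivial family $(n\mathbf{p}_0,nq_0)$ in the rational case) yields infinitely many distinct pairs, and passing to the Euclidean norm costs only the factor $\sqrt{d}$, so $C(d)=\sqrt{d}$ already suffices.
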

	Here, $\norm{\cdot}$ can be any norm on $\R^d$, so we assume it is the Euclidean norm for simplicity. We might ask how these approximates are distributed with respect to $\mathbf{x}$. More concretely, fix $\mathbf{x}\in\R^d$, let $\mathbb{S}^{d-1}$ be the unit sphere in $\R^d$, and let 
	\begin{equation*}
		\theta_\mathbf{x}:\R^d\times \R\rightarrow \mathbb{S}^{d-1} , \quad \theta_{\mathbf{x}}(\mathbf{p},q) = \frac{q\mathbf{x}-\mathbf{p}}{\norm{q\mathbf{x}-\mathbf{p}}}
	\end{equation*}
	Then $\theta_\mathbf{x}(\mathbf{p},q)$ measures the direction of the approximate $(\mathbf{p},q)$ relative to $\mathbf{x}$; this is referred to as \textit{spiralling of lattice approximates} \cite{athreyaSpiralingApproximationsSpherical2014}. Given measurable $A \subset \mathbb{S}^{d-1}$, we can ask what proportion of the directions of approximates $\theta_\mathbf{x}(\mathbf{p},q)$ lie in $A$, and they showed that it is proportional to the volume of $A$ in $\mathbb{S}^{d-1}$ for almost all $\mathbf{x}\in\R^d$.
	\begin{theorem}[{{Athreya, Ghosh, Tseng \cite[Theorem 1.1]{athreyaSpiralingApproximationsSpherical2014}}}]
		\label{AGT}
		Let $d\geq1$ and $A \subset \mathbb{S}^{d-1}$ be measurable. For $T>1$ and $\mathbf{x} \in \R^d$, define the sets
		\begin{equation*}
			N(\mathbf{x},T) = \#\Big\{(\mathbf{p},q)\in\Z^d\times \N \, \Big| \, 0 < q \leq T, \norm{q\mathbf{x}-\mathbf{p}}< C|q|^{-\frac{1}{d}} \Big\} 
		\end{equation*}
		\begin{equation*}
			N(\mathbf{x},T,A) = \#\Big\{(\mathbf{p},q) \in N(\mathbf{x},T) \, \Big| \, \theta_\mathbf{x}(\mathbf{p},q)\in A\Big\}
		\end{equation*}
		Then for almost every $\mathbf{x} \in \R^d$,
		\begin{equation*}
			\lim_{T\rightarrow\infty}\frac{N(\mathbf{x},T,A)}{N(\mathbf{x},T)} = \textnormal{vol}_{\mathbb{S}^{d-1}}(A)
		\end{equation*}
	\end{theorem}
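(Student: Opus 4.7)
My plan is to recast the counting problem via the dynamics of a diagonal flow on the space of unimodular lattices $X = \SL_{d+1}(\R)/\SL_{d+1}(\Z)$, equipped with its Haar probability measure $\mu$. For $\mathbf{x}\in\R^d$, set $u_{\mathbf{x}} = \begin{pmatrix}I_d & -\mathbf{x}\\ 0 & 1\end{pmatrix}$ and $\Lambda_{\mathbf{x}} = u_{\mathbf{x}}\Z^{d+1}$, so that $(\mathbf{p},q)\mapsto (\mathbf{p}-q\mathbf{x},q)$ puts approximates of $\mathbf{x}$ in bijection with lattice vectors of $\Lambda_{\mathbf{x}}$ having positive last coordinate. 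Let $g_t = \textnormal{diag}(e^{t/d},\ldots,e^{t/d},e^{-t})$ and, for measurable $A\subset\mathbb{S}^{d-1}$, consider the bounded region
\[
F_A = \bigl\{(\mathbf{w},s)\in\R^{d+1}: 1\le s<e,\ \norm{\mathbf{w}}<Cs^{-1/d},\ -\mathbf{w}/\norm{\mathbf{w}}\in A\bigr\}.
\]
A direct calculation shows that lattice points of $g_k\Lambda_{\mathbf{x}}$ inside $F_A$ correspond bijectively to approximates $(\mathbf{p},q)$ with $e^k\le q<e^{k+1}$, $\norm{q\mathbf{x}-\mathbf{p}}<Cq^{-1/d}$, and $\theta_{\mathbf{x}}(\mathbf{p},q)\in A$, so summing yields
\[
N(\mathbf{x},e^N,A) = \sum_{k=0}^{N-1}\#\bigl(g_k\Lambda_{\mathbf{x}}\cap F_A\bigr) + O(1).
\]

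Siegel's mean value theorem identifies the integral of the Siegel transform $\widehat{F_A}(\Lambda):=\#(\Lambda\cap F_A)$ over $X$ with the Lebesgue volume of $F_A$; a polar-coordinate computation then gives
\[
\int_X \widehat{F_A}\,d\mu = \textnormal{vol}_{d+1}(F_A) = C^d\,\textnormal{vol}_d(B_d(0,1))\,\textnormal{vol}_{\mathbb{S}^{d-1}}(A),
\]
the sphere carrying its rotation-invariant probability measure. Since $g_t$ is mixing on $(X,\mu)$ by Howe--Moore, the time-one map $g_1$ is ergodic, so Birkhoff's theorem applied to $\widehat{F_A}\in L^1(\mu)$ gives, for $\mu$-a.e.\ $\Lambda\in X$, convergence of the ergodic average to $\textnormal{vol}_{d+1}(F_A)$. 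Dividing the resulting limit for $A$ by the corresponding limit for $\mathbb{S}^{d-1}$ produces the asserted ratio $\textnormal{vol}_{\mathbb{S}^{d-1}}(A)$.

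The main obstacle is that $\{\Lambda_{\mathbf{x}}:\mathbf{x}\in\R^d\}$ is precisely the unstable horocycle orbit through $\Z^{d+1}$, hence a $\mu$-null subset of $X$, and so Birkhoff on $X$ does not by itself give an almost-everywhere statement in $\mathbf{x}$. To bridge this I would invoke mixing of $g_t$ a second time: the expanding translates $g_t\{\Lambda_{\mathbf{x}}:\mathbf{x}\in K\}$ equidistribute in $X$ for any compact $K\subset\R^d$, which yields convergence in $\mathbf{x}$-measure, and a maximal inequality combined with a Borel--Cantelli argument along the geometric subsequence $T_n=e^n$ promotes this to pointwise almost-everywhere convergence. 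An auxiliary technical point is that $\widehat{F_A}$ is unbounded on $X$; one handles this by truncating to the subset of lattices whose shortest vector exceeds a parameter $\varepsilon>0$, bounding the complement using Rogers' $L^2$-estimate on the Siegel transform, and letting $\varepsilon\to 0$ at the end.
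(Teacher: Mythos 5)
The paper does not actually prove this statement: Theorem \ref{AGT} is quoted verbatim from Athreya--Ghosh--Tseng as motivation, with only a one-sentence indication of method (Siegel transform, Birkhoff, Moore ergodicity). Your skeleton is the same one the paper later uses for its effective analogues (Theorems \ref{main}--\ref{effective3}): the Dani-type correspondence $\mathbf{x}\mapsto u_{\mathbf{x}}\Z^{d+1}$, the tiling of the counting region by $g_k$-translates of a fixed bounded region so that $N(\mathbf{x},e^N,A)$ becomes a Birkhoff sum of a Siegel transform, Siegel's mean value theorem to identify the space average with $\textnormal{vol}_{d+1}(F_A)$, and division of the two limits. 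All of that part of your write-up is correct, including the volume computation and the bijection between $g_k\Lambda_{\mathbf{x}}\cap F_A$ and approximates with $e^k\leq q<e^{k+1}$.

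There are, however, two concrete gaps in the part you rightly flag as the main obstacle. First, equidistribution of the expanding translates $g_t\{\Lambda_{\mathbf{x}}:\mathbf{x}\in K\}$ only gives $\int_K \widehat{\mathbbm{1}}_{F_A}(g_t\Lambda_{\mathbf{x}})\,d\mathbf{x}\to\int_X\widehat{\mathbbm{1}}_{F_A}\,d\mu$, i.e.\ convergence of the averages in $L^1(K,d\mathbf{x})$; Borel--Cantelli along $T_n=e^n$ cannot upgrade convergence in measure to pointwise a.e.\ convergence without a \emph{summable quantitative} bound, which in practice means a second-moment (variance) estimate over $\mathbf{x}\in K$ coming from double equidistribution or decay of correlations. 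This is precisely the role played in the present paper by the $L^2$ computations (Lemma \ref{lemma}, Theorem \ref{KleinbockYu}) feeding into Gaposhkin's theorem, and it is the step your sketch leaves unproved. Second, your truncation argument leans on ``Rogers' $L^2$-estimate on the Siegel transform,'' but for $d=1$ the ambient space is $X_2$, where Rogers' moment formula fails and $\widehat{\mathbbm{1}}_{F_A}$ is not square-integrable; the paper itself has to replace Rogers by the Kleinbock--Yu formula for the \emph{primitive} transform in Section \ref{2d} for exactly this reason. Since Theorem \ref{AGT} is asserted for all $d\geq1$, the $d=1$ case needs a separate treatment (e.g.\ an $L^1$-approximation argument or restriction to primitive vectors) that your plan as written does not supply.
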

	This result was shown using the Siegel transform of a lattice, which links Euclidean space to the space of unimodular lattices, allowing the introduction of ergodic techniques (such as Birkhoff's ergodic theorem and Moore's ergodicity theorem) and is another example of the link between homogeneous dynamics and number theory. The purpose of this paper is to give an effective version of theorem \ref{AGT} and related problems in Diophantine approximation.
	\subsection{Linear Forms and Affine Lattices}\label{dani}
	The problem of Diophantine approximation can be generalised to higher dimensions as follows. Let $d,m,n \geq 1$ be natural numbers satisfying $d=m+n$. Given $M \in \text{Mat}_{m,n}(\R)$ and $C>0$, we can ask how many pairs $(\mathbf{p},\mathbf{q})\in \Z^m \times \Z^n$ satisfy 
	\begin{equation}\label{linearforms}
		\norm{M\mathbf{q}-\mathbf{p}}<C\norm{\mathbf{q}}^{-\frac{n}{m}}
	\end{equation} 
	Define 
	\begin{equation*}
		\Lambda_M = \begin{pmatrix}
			\text{Id}_m & M \\
			\mathbf{0} & \text{Id}_n 
		\end{pmatrix}
	\end{equation*} 
	Then we have the equality of sets
	\begin{equation*}
		\Lambda_M\Z^d = \Bigg\{ \begin{pmatrix}
			M\mathbf{q}-\mathbf{p} \\ \mathbf{q}\end{pmatrix} \, \Bigg| \, \mathbf{p} \in\Z^m, \mathbf{q}\in\Z^n
		\Bigg\}
	\end{equation*}	
	Define the area
	\begin{equation*}
		R_{T,c}= \Big\{(\mathbf{x},\mathbf{y})\in\R^m \times \R^n \, \Big| \, \norm{\mathbf{x}}^m \norm{\mathbf{y}}^n \leq c ,\, 1 \leq \norm{\mathbf{y}} < T\Big\} \subset \R^d
	\end{equation*}
	We calculate $\textnormal{vol}_d(R_{T,c}) = cB_mC_n\log T$, where $B_m$ is the $m$-dimensional volume of the unit $m$-ball and $C_n$ is the surface area of the unit sphere $\mathbb{S}^{n-1}$. The number of solutions to (\ref{linearforms}) is then equal to $\#(\Lambda_M\Z^d \cap R_{T,c})$ for some $c>0$. This reformulation of a problem of Diophantine approximation to a problem in homogeneous dynamics is known as Dani's correspondence (see \cite[Section 5.2c]{kleinbockChapter11Dynamics2002} for more details). Let $X_d$ be the set of all unimodular lattices in $\R^d$. Then we may identify $X_d$ with the quotient group $\SL_{d}(\R)/\SL_d(\Z)$ and assign a left-$\SL_d(\R)$-invariant Haar measure $\mu=\mu_d$ onto it, which is unique up to scalar multiplication, induced by the unique measure with the same properties on $\SL_d(\R)$.  Asymptotically, we can count the number of points of a unimodular lattice $\Lambda \in X_d$ contained in the region $R_{T,c}$ accordingly.
	
	\begin{theorem}[Athreya, Parrish, Tseng {{\cite[Theorem 1.5]{athreyaErgodicTheoryDiophantine2016}}}]\label{APT1}
		For all $c>0$ and for $\mu$-a.e. unimodular lattice $\Lambda \in X_d$,
		\begin{equation*}
			\lim_{T\rightarrow \infty} \frac{\#\left( \Lambda \cap R_{T,c}\right)}{\textnormal{vol}_d(R_{T,c})} = 1
		\end{equation*}
	\end{theorem}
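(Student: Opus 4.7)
The plan is to reduce the counting problem to a Birkhoff-type ergodic average along the diagonal flow on $X_d$, following the Dani correspondence. Define the one-parameter subgroup
\begin{equation*}
a_t = \textnormal{diag}\bigl(e^{tn/m},\ldots,e^{tn/m},\,e^{-t},\ldots,e^{-t}\bigr) \in \SL_d(\R),
\end{equation*}
with $m$ copies of $e^{tn/m}$ followed by $n$ copies of $e^{-t}$. A direct check shows that $a_t$ preserves the quantity $\norm{\mathbf{x}}^m\norm{\mathbf{y}}^n$ on $\R^m\times\R^n$, while it contracts $\norm{\mathbf{y}}$ by the factor $e^{-t}$. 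Consequently, if we choose the ``slab''
\begin{equation*}
S = \Bigl\{(\mathbf{x},\mathbf{y})\in\R^m\times\R^n \,\Big|\, \norm{\mathbf{x}}^m\norm{\mathbf{y}}^n \leq c,\; 1 \leq \norm{\mathbf{y}} < e\Bigr\},
\end{equation*}
then $(\mathbf{x},\mathbf{y})\in a_{-k}S$ exactly when $\norm{\mathbf{x}}^m\norm{\mathbf{y}}^n\leq c$ and $e^{k}\leq\norm{\mathbf{y}} < e^{k+1}$. For any integer $N\geq 1$ this yields the disjoint decomposition $R_{e^N,c} = \bigsqcup_{k=0}^{N-1} a_{-k}S$.

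Next I would bring in the Siegel transform $\widetilde{F}(\Lambda) = \sum_{v\in\Lambda\setminus\{0\}}\mathbf{1}_S(v)$. Because $S$ is bounded away from the origin ($\norm{\mathbf{y}}\geq 1$), Siegel's integral formula applies and gives
\begin{equation*}
\int_{X_d}\widetilde{F}(\Lambda)\,d\mu(\Lambda) = \textnormal{vol}_d(S) = cB_mC_n,
\end{equation*}
after normalising $\mu$ so that Siegel's formula holds. A change of variables gives $\widetilde{F}(a_k\Lambda) = \#(\Lambda\cap a_{-k}S)$, and summing over $k$ via the above decomposition produces the identity
\begin{equation*}
\#(\Lambda\cap R_{e^N,c}) = \sum_{k=0}^{N-1}\widetilde{F}(a_k\Lambda).
\end{equation*}
Since $\widetilde{F}\in L^1(X_d,\mu)$ and the one-parameter subgroup $\{a_t\}$ is non-compact and hence ergodic on $X_d$ by Moore's theorem, Birkhoff's ergodic theorem applied to the time-one map $a_1$ yields
\begin{equation*}
\frac{1}{N}\sum_{k=0}^{N-1}\widetilde{F}(a_k\Lambda) \;\longrightarrow\; cB_mC_n \qquad\text{for $\mu$-a.e.\ }\Lambda\in X_d.
\end{equation*}

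Finally, I would interpolate between dyadic (really $e$-adic) times. For arbitrary $T>1$ choose $N=\lfloor \log T\rfloor$; then $R_{e^N,c}\subseteq R_{T,c}\subseteq R_{e^{N+1},c}$, so dividing by $\textnormal{vol}_d(R_{T,c}) = cB_mC_n\log T$ and using the Birkhoff limit on both ends squeezes the ratio to $1$. The main obstacle in executing this programme is purely analytic: $\widetilde{F}$ is unbounded on $X_d$, so while $L^1$-integrability (from Siegel) is exactly what Birkhoff requires for a.e.\ convergence, one must be careful that the null set of bad lattices supplied by the ergodic theorem does not depend on $c$; this is handled by running the argument simultaneously on a countable dense set of parameters and using monotonicity of $\#(\Lambda\cap R_{T,c})$ in $c$. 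No higher moments of $\widetilde{F}$ are needed here, although they will be necessary later to obtain the effective error rates.
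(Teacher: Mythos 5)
This theorem is quoted from Athreya--Parrish--Tseng and is not reproved in the paper, so the natural comparison is with the paper's proof of its effective refinement (Theorem \ref{effective2}). Your argument is correct and follows exactly the same skeleton: the same diagonal flow $g_t=\mathrm{diag}(e^{tn/m}\mathrm{Id}_m,\,e^{-t}\mathrm{Id}_n)$, the same disjoint decomposition of $R_{e^N,c}$ into flow-translates of a fixed slab, the identity $\#(\Lambda\cap R_{e^N,c})=\sum_{k=0}^{N-1}\widehat{\mathbbm{1}}_S(a_k\Lambda)$ via Siegel's formula, Moore ergodicity of the time-one map, and interpolation between the times $e^N$. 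The only substantive difference is the ergodic input: you invoke Birkhoff's theorem, which needs only $\widehat{\mathbbm{1}}_S\in L^1$ and yields the qualitative limit, whereas the paper must control the second moment via Rogers' theorem (Lemma \ref{lemma}) so as to apply Gaposhkin's effective theorem and extract a rate. Your closing remarks are sound: the dependence of the exceptional null set on $c$ is harmless as stated (the quantifier is ``for each $c$''), and indeed no higher moments are needed for the non-effective statement.
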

	A generalisation of this result for affine unimodular lattices is also possible. Given $\Lambda \in X_d$ and a non-zero vector $\mathbf{\xi} \in \R^d / \Lambda$, $\Lambda+\mathbf{\xi}$ is an affine unimodular lattice and we denote by $Y_d$ the set of all such affine unimodular lattices. Similarly to $X_d$, we may further identify the space $Y_d$ with the quotient space $\SL_d(\R) \ltimes \R^d / \SL_d(\Z)\ltimes \Z^d$ and endow $Y_d$ with a natural Haar measure $\nu=\nu_d$. We have the following theorem related to theorem \ref{APT1}.
	\begin{theorem}[Athreya, Parrish, Tseng {{\cite[Theorem 1.6]{athreyaErgodicTheoryDiophantine2016}}}]\label{APT2}
		For all $c>0$ and for $\nu$-a.e. unimodular lattice $\Lambda \in Y_d$,
		\begin{equation*}
			\lim_{T\rightarrow \infty} \frac{\#\left( \left(\Lambda +\mathbf{\xi}\right)\cap R_{T,c}\right)}{\textnormal{vol}_d(R_{T,c})} = 1
		\end{equation*}
	\end{theorem}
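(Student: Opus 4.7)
The plan is to prove Theorem \ref{APT2} via Dani's correspondence combined with Birkhoff's ergodic theorem applied to a diagonal flow on $Y_d$. First I would introduce the one-parameter subgroup $a_t = \textnormal{diag}(e^{t/m}\textnormal{Id}_m, e^{-t/n}\textnormal{Id}_n) \in \SL_d(\R)$, which is chosen so that the Jacobian of its action on each factor makes the product $\norm{\mathbf{x}}^m\norm{\mathbf{y}}^n$ invariant, and so that its translates foliate $R_{T,c}$. Concretely, pick a bounded "cross-section" $S = \{(\mathbf{x},\mathbf{y}) : \norm{\mathbf{x}}^m\norm{\mathbf{y}}^n \leq c,\ 1 \leq \norm{\mathbf{y}} < e\}$, whose $a_t$-translates $a_tS$ tile $R_{T,c}$ (with controlled overlap) as $t$ ranges over $[0,\log T)$, and note that $\textnormal{vol}_d(S) = cB_mC_n$.

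Second, for the indicator $\chi_S$ form the affine Siegel-type transform $\widetilde{\chi_S}(\Lambda+\xi) = \sum_{v \in \Lambda+\xi}\chi_S(v)$, and exploit the $a_t$-equivariance $\#((\Lambda+\xi)\cap a_tS) = \#((a_{-t}\Lambda + a_{-t}\xi)\cap S)$ to convert the geometric count into an ergodic average:
\begin{equation*}
\#((\Lambda+\xi)\cap R_{T,c}) = \int_0^{\log T}\widetilde{\chi_S}(a_{-t}(\Lambda+\xi))\,dt + O(1).
\end{equation*}
The $O(1)$ term absorbs the boundary overlaps between consecutive slabs $a_tS$.

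Third, I would apply Birkhoff's pointwise ergodic theorem on $(Y_d,\nu)$ to the flow $a_t$. Two inputs are needed: (i) ergodicity of $a_t$ on $Y_d$, which follows from Moore's theorem applied to the semidirect product $\SL_d(\R)\ltimes\R^d$ — the $\R^d$ part adds no $a_t$-invariant functions since $a_t$ has no zero eigenvalues on $\R^d$, and ergodicity on $X_d$ is classical; and (ii) integrability of $\widetilde{\chi_S}$ on $Y_d$. The latter is actually cleaner in the affine case than in the linear one: because the generic affine lattice $\Lambda+\xi$ avoids $0$, there is no cuspidal singularity from the zero vector, and the affine Siegel mean value formula yields
\begin{equation*}
\int_{Y_d}\widetilde{\chi_S}\,d\nu = \int_{\R^d}\chi_S(v)\,dv = cB_mC_n.
\end{equation*}
Combining with Birkhoff, $\frac{1}{\log T}\int_0^{\log T}\widetilde{\chi_S}(a_{-t}(\Lambda+\xi))\,dt \to cB_mC_n$ for $\nu$-a.e.\ $\Lambda+\xi$, and multiplying through by $\log T$ matches $\textnormal{vol}_d(R_{T,c})$.

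The main obstacle I anticipate is the unboundedness of $\widetilde{\chi_S}$ on $Y_d$: it is in $L^1(\nu)$ but not in $L^\infty$, because lattices with arbitrarily long thin vectors in directions aligned with $S$ contribute arbitrarily large values. Birkhoff still applies to $L^1$ functions, so for the qualitative a.e.\ statement this is only a nuisance; but the same unboundedness is exactly what prevents one from reading off a rate of convergence, and is the reason the effective versions in the sequel require a truncation argument together with quantitative mean-square or mixing estimates for $a_t$ in place of raw Birkhoff.
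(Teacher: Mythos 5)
This statement is quoted from Athreya--Parrish--Tseng; the paper does not prove it, so there is no internal proof to compare against. Your proposal reconstructs the standard argument behind the cited result, and its skeleton is sound: Dani correspondence, an affine Siegel mean value formula to get $\widetilde{\chi_S}\in L^1(Y_d,\nu)$, and the pointwise ergodic theorem for an ergodic diagonal flow. It also correctly anticipates why the paper's effective version (Theorem \ref{effective3}) replaces Birkhoff with Gaposhkin's theorem plus the second-moment identity of Lemma \ref{EBMV}.

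Three points need repair or more care. First, a normalization mismatch: with $a_t=\mathrm{diag}(e^{t/m}\mathrm{Id}_m,e^{-t/n}\mathrm{Id}_n)$ the slab $1\leq\norm{\mathbf{y}}<e$ sweeps out $1\leq\norm{\mathbf{y}}<T$ only as $t$ ranges over an interval of length $n\log T$, not $\log T$ (each lattice point is counted for a $t$-interval of length $n$); either rescale the flow as in the paper's proof of Theorem \ref{effective2}, where $g_t$ acts by $e^{-t}$ on the $\mathbf{y}$-block, or carry the factor $n$ through. Second, the claimed $O(1)$ error is too strong: consecutive half-open slabs do not overlap, but the truncated final slab contributes a number of lattice points that is unbounded in $T$ for a.e.\ affine lattice; this is harmless (sandwich the count between the ergodic integrals over $[0,n(\log T-1))$ and $[0,n\log T)$, both of which have the same normalized limit), but it must be argued rather than asserted. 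Note that the paper's own device --- the exact discrete telescoping $\sum_{k=0}^{N-1}\widehat{\mathbbm{1}}_{R_{T,c}}(g_{\log T}^k\Lambda)=\widehat{\mathbbm{1}}_{R_{T^N,c}}(\Lambda)$ followed by varying $T\in[2,4)$ --- eliminates this boundary issue entirely and would serve you equally well here with the discrete Birkhoff theorem. Third, ergodicity of $a_t$ on $Y_d$ does not follow from Moore's theorem as stated, since $\SL_d(\R)\ltimes\R^d$ is not semisimple; the conclusion is true and standard, but it requires either a Howe--Moore-type statement for the semidirect product or a direct mixing argument decomposing $L^2(Y_d)$ over the $\R^d$-Fourier modes, and your one-line justification about eigenvalues is not yet a proof.
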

	\subsection{Statement of Results}\label{SoR}
	In the following theorems, $f(x)=o_\gamma\left(g(x)\right)$ means that there exists a constant $C(\gamma)>0$ and $x_0\geq 0$ such that for all $x>x_0$, $|f(x)|< C(\gamma)\left(g(x)\right)$. Additionally, define
	\begin{gather*}
		P_{T,c} = \left\{ \left(\mathbf{v}_1,v_2\right)\in\R^d\times \R\, \bigg| \, \norm{\mathbf{v}_1}v_2\leq c , \, 1<v_2\leq T \right\} \subset \R^{d+1}\\
		P_{T,c,A} = \left\{ \left(\mathbf{v}_1,v_2\right)\in P_{T,c}\, \bigg| \, \frac{\mathbf{v}_1}{\norm{\mathbf{v}_1}}\in A \right\}
	\end{gather*}
	Our first result is an effective version of theorem \ref{AGT} for $d\geq 3$.
	\begin{theorem}
		\label{main}
		Let $d\geq 3$ and $A\subset \mathbb{S}^{d-2}$ be measurable. Then for any $\epsilon >0$, for $\mu$-a.e. $\Lambda \in X_{d}$ and for sufficiently large $T$,
		\begin{equation*}
			\frac{\#(\Lambda\cap P_{T,c,A})}{\#(\Lambda \cap P_{T,c})}=\textnormal{vol}_{\mathbb{S}^{d-2}}(A) +o_{A,c,\Lambda,d}\big( (\log T)^{-\frac{1}{2}}(\log \log T)^\frac{3}{2}(\log\log\log T)^{\frac{1}{2}+\epsilon}\big)
		\end{equation*}
	\end{theorem}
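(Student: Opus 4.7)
The plan is to reduce the counting problem to a Birkhoff sum via the Dani correspondence, and then apply a quantitative pointwise ergodic theorem for a diagonal flow on $X_d$ with an error rate matching the one in the theorem. Let $a_t \in \SL_d(\R)$ be the one-parameter diagonal subgroup that preserves the defining inequality of $P_{T,c}$ while scaling the $v_2$-coordinate by $e^t$, and let $B_c = P_{e,c}$ denote the fundamental slab of unit logarithmic length. Then $P_{T,c} = \bigsqcup_{k=0}^{\lfloor\log T\rfloor - 1} a_{-k}B_c$ up to a bounded boundary correction, and writing $\hat{f}(\Lambda) = \sum_{v \in \Lambda\setminus\{0\}}f(v)$ for the Siegel transform,
\[
\#(\Lambda\cap P_{T,c}) = \sum_{k=0}^{\lfloor\log T\rfloor - 1} \widehat{\mathbf{1}_{B_c}}(a_k\Lambda) + O(1),
\]
with an analogous identity for $\#(\Lambda\cap P_{T,c,A})$ in which $\mathbf{1}_{B_c}$ is replaced by $\mathbf{1}_{B_c}\cdot(\mathbf{1}_A\circ(\mathbf{v}_1/\norm{\mathbf{v}_1}))$.

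By Siegel's mean value theorem, the Haar integral of each Siegel transform equals the Lebesgue volume of the underlying set. Rotational symmetry of $B_c$ in the $\mathbf{v}_1$-coordinate then gives that the ratio of the two integrals is exactly $\textnormal{vol}_{\mathbb{S}^{d-2}}(A)$, using the normalised probability measure on $\mathbb{S}^{d-2}$. Setting $N := \lfloor\log T\rfloor$, the theorem therefore reduces to showing the effective pointwise estimate
\[
\sum_{k=0}^{N-1}\hat{f}(a_k\Lambda) = N\int_{X_d}\hat{f}\,d\mu + o\big(N^{1/2}(\log N)^{3/2}(\log\log N)^{1/2+\epsilon}\big)
\]
for $\mu$-a.e.\ $\Lambda$ and for both choices of $f$; dividing two such asymptotics with linear leading terms then produces the stated rate in the ratio, since $N^{-1/2}(\log N)^{3/2}(\log\log N)^{1/2+\epsilon} = (\log T)^{-1/2}(\log\log T)^{3/2}(\log\log\log T)^{1/2+\epsilon}$.

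To establish this quantitative ergodic estimate I would use the G\'al--Koksma / Sprind\v{z}uk strategy. Since the Siegel transform is unbounded on $X_d$, first truncate $\hat{f}^{(R)} = \hat f \cdot \mathbf{1}_{\hat f \leq R}$ at a level $R = R(N)$ to be chosen. Rogers' second-moment identity for the Siegel transform, combined with exponential mixing of $a_t$ on $L^2_0(X_d)$, yields an $O(N)$ bound on the variance of the centred partial sum $\sum_{k<N}(\hat f^{(R)}(a_k\Lambda) - \int\hat f^{(R)}\,d\mu)$; a G\'al--Koksma lemma applied along a dyadic subsequence then converts this $L^2$ estimate into the almost-sure pointwise bound with the desired $(\log N)^{3/2}(\log\log N)^{1/2+\epsilon}$ logarithmic loss. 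The tail $\hat f - \hat f^{(R)}$ is controlled by Borel--Cantelli via the first-moment tail estimate $\mu(\{\hat f > R\})\ll R^{-1}$. The main obstacle will be choosing $R(N)$ so that the variance contribution, the truncation tail and the Borel--Cantelli bound all sit below the target rate simultaneously; a secondary subtlety is obtaining uniform Rogers-type moment control for the Siegel transform of the spiralling restriction $B_c\cap\{\mathbf{v}_1/\norm{\mathbf{v}_1}\in A\}$, which I would address by approximating $A$ from inside and outside by sets whose boundaries carry zero $\mathbb{S}^{d-2}$-surface measure, so that both numerator and denominator enjoy the same rate before the ratio is taken.
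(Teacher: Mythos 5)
Your high-level reduction is the same as the paper's: tile $P_{T,c}$ by translates of a fixed slab under the diagonal flow so that the count becomes a Birkhoff sum of a Siegel transform, compute the means by Siegel's mean value theorem (with rotational symmetry giving the factor $\textnormal{vol}_{\mathbb{S}^{d-2}}(A)$), prove an $O(N)$ bound on the variance of the centred partial sums, upgrade to a pointwise rate, and divide the two asymptotics. Where you diverge is in the central quantitative step, and your route is both heavier and, as written, has a weak link. The paper exploits the fact that the tiling is exact: since the translates $g_s^{-k}P_{T,c}$ are disjoint with union $P_{T^N,c}$, the Birkhoff sum telescopes to the single function $\widehat{\mathbbm{1}}_{P_{T^N,c}}(\Lambda)$, and Rogers' second-moment formula (Lemma \ref{lemma}) applied to that one set gives $\bigl(\textnormal{vol}\bigr)^2 + O(\textnormal{vol}) = N^2\textnormal{vol}(P_{T,c})^2 + O(N)$ directly. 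No truncation, no mixing, and no Borel--Cantelli are needed: for $d\geq 3$ the Siegel transform of a bounded set is already in $L^2(X_d)$ by Rogers, so your opening move of truncating at level $R(N)$ and controlling the tail is solving a problem that does not arise. The pointwise upgrade is then Gaposhkin's theorem (Theorem \ref{Gaposhkin}), which is exactly what delivers the $(\log N)^{3/2}(\log\log N)^{1/2+\epsilon}$ loss; a bare G\'al--Koksma argument along dyadic blocks gives only $(\log N)^{3/2+\epsilon}$, so the sharper rate you claim is really Gaposhkin's refinement, not a consequence of the standard lemma.

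The genuinely problematic step is your appeal to ``exponential mixing of $a_t$ on $L^2_0(X_d)$'' to control the cross-correlations of the truncated Siegel transforms. Effective mixing statements for $X_d$ are formulated for Sobolev-regular test functions; $\widehat{\mathbbm{1}}_{B_c}$ (truncated or not) is neither smooth nor bounded, so invoking mixing here requires a smoothing-and-approximation argument (as in Kleinbock--Shi--Weiss) whose error terms you would then have to track against the target rate --- a substantial piece of work you have not accounted for. The fix is simply to drop mixing: the correlations $\int \widehat{\mathbbm{1}}_{B_c}(a_k\Lambda)\,\widehat{\mathbbm{1}}_{B_c}(a_j\Lambda)\,d\mu$ are computed exactly by Rogers' theorem applied to $\rho(x,y)=\mathbbm{1}_{a_{-k}B_c}(x)\mathbbm{1}_{a_{-j}B_c}(y)$, or, more cleanly, one bypasses them entirely via the telescoping identity above. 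Your final remark about approximating $A$ by sets with null boundary is likewise unnecessary, since Siegel's and Rogers' theorems apply to arbitrary measurable sets. With these simplifications your argument collapses onto the paper's proof.
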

	A similar spiralling result can be obtained for all $d\geq2$ by restricting the lattice to only its primitive elements. Given $\Lambda \in X_d$, define $\Lambda^\text{pr}$ to be the set of all points $\mathbf{v}\in \Lambda$ such that $\alpha\mathbf{v}\notin \Lambda$ for all non-zero $|\alpha| \neq 1$.
	\begin{theorem}\label{prim}
		Let $d\geq 2$, and let $A\subset \mathbb{S}^{d-2}$ be measurable. Then for any $\epsilon >0$, for $\mu$-a.e. $\Lambda \in X_{d}$ and for sufficiently large $T$,
		\begin{equation*}
			\frac{\#(\Lambda^{\textnormal{pr}}\cap P_{T,c,A})}{\#(\Lambda^{\textnormal{pr}} \cap P_{T,c})}=\textnormal{vol}_{\mathbb{S}^{d-2}}(A) +o_{A,c,\Lambda,d}\big( (\log T)^{-\frac{1}{2}}(\log \log T)^\frac{3}{2}(\log\log\log T)^{\frac{1}{2}+\epsilon}\big)
		\end{equation*}
	\end{theorem}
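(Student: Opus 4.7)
The plan is to run the same scheme as in Theorem \ref{main}, replacing the ordinary Siegel transform by the primitive Siegel transform
\begin{equation*}
\hat{f}^{\textnormal{pr}}(\Lambda) := \sum_{v \in \Lambda^{\textnormal{pr}}} f(v).
\end{equation*}
The two analytic inputs needed are the primitive Rogers mean value formula $\int_{X_d} \hat{f}^{\textnormal{pr}}\,d\mu = \zeta(d)^{-1}\int_{\R^d}f$ and a matching primitive second moment bound; granted these, the Chebyshev / Borel-Cantelli / monotone interpolation pipeline used for Theorem \ref{main} carries over almost verbatim. Since $P_{T,c,A}$ is carved out of $P_{T,c}$ by a rotationally invariant restriction on $\mathbf{v}_1/\norm{\mathbf{v}_1}$, the ratio of their volumes is exactly $\textnormal{vol}_{\mathbb{S}^{d-2}}(A)$; dividing the two almost-sure asymptotics, the $\zeta(d)^{-1}$ normalisations cancel and the claimed main term emerges.

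For $d \geq 3$, the primitive case can be reduced to Theorem \ref{main} by Möbius inversion. Writing $\hat{f}^{\textnormal{pr}}(\Lambda) = \sum_{k\geq 1}\mu(k)\hat{f_k}(\Lambda)$ with $f_k(v) := f(kv)$, each term counts points of $\Lambda$ in the dilate $k^{-1}P_{T,c,A}$, which differs from $P_{T/k,c/k^{2},A}$ only by the slice $\{1/k < v_2 \leq 1\}$ of volume bounded uniformly in $T$. Applying Theorem \ref{main} termwise and using that $\textnormal{vol}(k^{-1}P_{T,c,A})$ decays polynomially in $k$, the absolutely convergent sum inherits the same iterated-logarithm error rate. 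The case $d=2$ lies outside the range of Theorem \ref{main}, and so the argument there must be performed directly on $X_2$ using the primitive Rogers formula together with the same discretisation $T_n = e^n$ and monotone interpolation as before.

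The main obstacle is establishing a primitive second moment estimate uniformly in $T$ that is sharp enough to preserve the error rate. Because $\mathbbm{1}_{P_{T,c}}$ has support extending into the cusp of $X_d$, naive Rogers-type bounds produce near-diagonal contributions growing like $(\log T)^2$, which would spoil the claimed iterated-logarithm shape. Taming these requires a dyadic decomposition along the $v_2$-coordinate and a careful truncation of long, thin cusp excursions. Once a variance bound of order $\textnormal{vol}(P_{T,c})$ is in hand, the passage from geometric times $T_n = e^n$ back to arbitrary $T > 1$ is routine, and the combined error after dividing numerator and denominator matches that of Theorem \ref{main}.
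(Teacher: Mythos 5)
Your outline for $d\geq 3$ matches the paper's actual route: the variance is computed with Rogers' primitive second-moment formula (Theorem \ref{Rogerprim}), which gives $\int_{X_d}\widetilde{\mathbbm{1}}_{P_{T^N,c}}^{\,2}\,d\mu - \left(\textnormal{vol}(P_{T^N,c})/\zeta(d)\right)^2$ \emph{exactly} equal to $\textnormal{vol}(P_{T^N,c})/\zeta(d) = O_{T,c,d}(N)$ (the cross term $\int \mathbbm{1}_{P_{T^N,c}}(\mathbf{x})\mathbbm{1}_{P_{T^N,c}}(-\mathbf{x})\,d\mathbf{x}$ vanishes since $v_2>1$ on $P_{T,c}$, so $P_{T,c}\cap(-P_{T,c})=\emptyset$), after which Gaposhkin's theorem is applied exactly as in Theorem \ref{main}. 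Your worry about near-diagonal contributions of size $(\log T)^2$ is therefore unfounded in this range, and no dyadic decomposition or cusp truncation is needed. The M\"obius-inversion alternative is a detour that creates its own problem: you would be summing infinitely many almost-sure asymptotics, each with its own exceptional null set and an implied constant depending on the region, so you would need uniformity in $k$ that Theorem \ref{main} as stated does not provide.

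The genuine gap is $d=2$, which is the crux of the theorem. You propose to perform the argument ``directly on $X_2$ using the primitive Rogers formula,'' but no such formula is available: both Theorem \ref{RMVT} and Theorem \ref{Rogerprim} require $d\geq 3$, and the paper states explicitly that neither holds on $X_2$. This is precisely why the paper devotes all of Section \ref{2d} to this case, substituting the Kleinbock--Yu second-moment formula for primitive Siegel transforms on $X_2$ (Theorem \ref{KleinbockYu}), which expresses $\norm{\widetilde{\mathbbm{1}}_{P_{T,c}}}_2^2$ in terms of $\textnormal{area}(P_{T,c})$ plus the sum $\sum_{n\neq 0}\frac{\varphi(|n|)}{|n|}\iint_{P_{T,c}}\big|\mathcal{I}^n_{(x,y)}(P_{T,c})\big|\,dx\,dy$, and then carrying out a lengthy explicit evaluation: decomposing each integral over the regions $A_1, A_{y_2}, A_{y_3}, A_{y_4}, A_T$, expanding the summands in power series, and applying partial summation together with Walfisz's asymptotic for $\sum\varphi(n)/n$ to show the variance is $O(\log T)$, i.e.\ $O(N)$ at time $T^N$. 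Your fallback sketch of a ``dyadic decomposition along the $v_2$-coordinate and truncation of cusp excursions'' neither identifies this input nor supplies a substitute for it, so the $d=2$ case --- the only part of the theorem not already covered by the machinery of Theorem \ref{main} --- is not established by your argument.
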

	For linear forms, we have the following effective version of theorem \ref{APT1}.
	\begin{theorem}\label{effective2}
		Let $d,m,n\geq 1$ be natural numbers satisfying $d=m+n$. Then for all $\epsilon>0$, for $\mu$-a.e. $\Lambda \in X_d$, and for sufficiently large $T$,
		\begin{equation*}
			\#\left( \Lambda \cap R_{T,c}\right) = cB_mC_n\log T + o_{c,\Lambda,d}\left(\left(\log T\right)^{-\frac{1}{2}} \left(\log \log T\right)^\frac{3}{2} \left(\log\log\log T\right)^{\frac{1}{2}+\epsilon}\right) 
		\end{equation*}
	\end{theorem}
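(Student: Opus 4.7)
The plan is to follow a Dani-correspondence approach, recasting the lattice-point count as an ergodic average on $X_d$ and then running a quantitative strong law for that average. Introduce the $\SL_d(\R)$-flow
$$g_t := \mathrm{diag}\bigl(e^{-nt/m} I_m,\; e^{t} I_n\bigr),$$
which preserves $\norm{\mathbf{x}}^{m}\norm{\mathbf{y}}^{n}$ and scales $\norm{\mathbf{y}}$ by $e^{t}$. Setting $S := R_{e,c}$ and $K := \lfloor \log T \rfloor$, the region $R_{T,c}$ decomposes, up to a residual $\mathbf{y}$-slice of height $[e^K,T)$, as the essentially disjoint union $\bigsqcup_{k=0}^{K-1} g_k S$. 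Writing $F(\Lambda) := \#(\Lambda \cap S)$ for the Siegel transform of $\chi_S$, this gives
$$\#(\Lambda \cap R_{T,c}) \;=\; \sum_{k=0}^{K-1} F(g_{-k}\Lambda) \;+\; O\bigl(F(g_{-K}\Lambda)\bigr).$$
By Siegel's integral formula, $\int_{X_d} F\,d\mu = \mathrm{vol}_d(S) = cB_mC_n$, so the mean of the ergodic sum is exactly the desired main term $cB_mC_n K = cB_mC_n\log T + O(1)$.

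What remains is to control the fluctuation
$$M_K(\Lambda) := \sum_{k=0}^{K-1}\bigl(F(g_{-k}\Lambda) - cB_mC_n\bigr).$$
Combining Rogers' second-moment formula for the Siegel transform with the exponential decay of matrix coefficients for the $g_t$-action on $L^2(X_d)$, I would derive a variance estimate $\norm{M_K}_{L^2(X_d)}^2 \leq C K$. From this $L^2$-input the sharp almost-sure rate is extracted by a Borel--Cantelli / Gál--Koksma chaining: Chebyshev's inequality along a carefully chosen sparse subsequence $\{K_j\}$ secures a summable tail, and a second application, this time to the dyadic differences $M_{K_{j+1}} - M_{K_j}$, controls the oscillation between consecutive subsequence values. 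Optimising $\{K_j\}$ against the target rate yields
$$|M_K(\Lambda)| \;=\; o_\Lambda\bigl(K^{1/2}(\log K)^{3/2}(\log\log K)^{1/2+\epsilon}\bigr) \qquad \mu\text{-a.s.},$$
which, after setting $K = \lfloor\log T\rfloor$, is the error rate in the statement. The leftover boundary term $F(g_{-K}\Lambda)$ is absorbed by a further Borel--Cantelli argument applied to the tail of $F$.

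The principal obstacle is the unboundedness of the Siegel transform $F = \hat{\chi}_S$: on lattices possessing very short vectors in the $\mathbf{x}$-direction, $F(\Lambda)$ can be arbitrarily large, so uniform $L^\infty$ control fails and the naive variance method is inadequate. The remedy is a truncation $F = F_{\leq H} + F_{>H}$ at a height $H = H(K)$ growing with $K$: the variance and chaining argument is run for the bounded part $F_{\leq H}$, while excursions of $F_{>H}$ into the cusp of $X_d$ are dominated using the $L^p$-integrability of $\hat{\chi}_S$ supplied by Rogers' higher-moment formulas. Calibrating the truncation height $H(K)$ against the target variance is precisely what forces the iterated-logarithm factors $(\log\log K)^{1/2+\epsilon}$ appearing in the final bound.
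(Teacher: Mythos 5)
Your outline has the same architecture as the paper's proof: Dani correspondence, decomposition of $R_{T,c}$ into translates of a fixed slab under the diagonal flow, an $O(K)$ variance bound for the Birkhoff sum of the Siegel transform, and a transfer from that $L^2$ bound to an almost-sure rate. But two of your ingredients are off. First, you do not need decay of matrix coefficients, and invoking it for the Siegel transform is problematic: $\widehat{\chi}_S$ is unbounded and non-smooth, so the standard effective-mixing estimates do not apply to it directly. The paper gets the variance bound purely from the telescoping identity $\sum_{i=0}^{N-1}\widehat{\mathbbm{1}}_{R_{T,c}}(g_{\log T}^{i}\Lambda)=\widehat{\mathbbm{1}}_{R_{T^{N},c}}(\Lambda)$: the Birkhoff sum is itself a single Siegel transform of a region of the same shape, so one application of Rogers' second-moment formula (Lemma \ref{lemma}) to $R_{T^{N},c}$ gives second moment at most $(\mathrm{vol})^2+O(\mathrm{vol})$, hence variance $O(N)$, with no mixing input at all.

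Second, and more seriously, the truncation step is a misdiagnosis. The passage from ``variance $O(N)$'' to the rate $N^{-1/2}(\log N)^{3/2}(\log\log N)^{1/2+\epsilon}$ is exactly Gaposhkin's theorem (Theorem \ref{Gaposhkin}), which requires only $f\in L^2$ and the variance condition --- no $L^\infty$ control, no truncation, and no higher moments. The iterated-logarithm factors are intrinsic to that G\'al--Koksma-type transfer (they come from choosing $\Psi(t)=(\log t)(\log\log t)^{1+\epsilon}$ with $\int_1^\infty \frac{dt}{t\Psi(t)}<\infty$), not from calibrating a truncation height $H(K)$; and the $L^p$-integrability of $\widehat{\chi}_S$ you want from higher Rogers moments only holds for $p<d$, so that route degrades precisely in low dimension. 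Your residual slice over $[e^K,T)$ can indeed be absorbed by Chebyshev plus Borel--Cantelli against the target rate (the tail $\sum_K K^{-1}(\log K)^{-3}$ converges), though the paper avoids the residual entirely by taking the step length $\log T$ with $T\in[2,4)$ and $\tau=T^{N}$ so that the decomposition is exact. Finally, note that both your argument and the paper's written proof rest on Rogers' formula, which holds only for $d\ge 3$; the case $d=2$ (i.e.\ $m=n=1$) is not covered by this argument and requires the separate analysis of section \ref{2d}.
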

	Finally, for affine lattices, we have the following effective version of theorem \ref{APT2}.
	\begin{theorem}\label{effective3}
		Let $d,m,n\geq 1$ be natural numbers satisfying $d=m+n$. Then for all $\epsilon>0$, for $\nu$-a.e. $\Lambda+\mathbf{\xi} \in Y_d$ and for sufficiently large $T$,
		\begin{equation*}
			\#\left(  \left(\Lambda+\mathbf{\xi}\right) \cap  R_{T,c} \right) = cB_mC_n\log T + o_{c,\Lambda,\mathbf{\xi},d}\left(\left(\log T\right)^{-\frac{1}{2}} \left(\log \log T\right)^\frac{3}{2} \left(\log\log\log T\right)^{\frac{1}{2}+\epsilon}\right) 
		\end{equation*}
	\end{theorem}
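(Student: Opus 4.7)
The plan is to mirror the strategy used for theorem \ref{effective2}, transferring everything from $X_d$ to the affine homogeneous space $Y_d=\SL_d(\R)\ltimes\R^d/\SL_d(\Z)\ltimes\Z^d$. First I would invoke Dani's correspondence via the diagonal flow $a_t=\textnormal{diag}(e^{t/m}\text{Id}_m,e^{-t/n}\text{Id}_n)$, which acts on $Y_d$ and preserves $\nu$. Under $a_t$ the region $R_{T,c}$ decomposes along the orbit into nearly-disjoint translates of a fixed bounded ``window'' $W\subset\R^d$; choosing a bump $f$ supported near $W$ with $\int_{\R^d}f\,d\mathbf{x}=cB_mC_n$ and defining the affine Siegel transform $\widetilde{f}(\Lambda'+\xi')=\sum_{v\in\Lambda'+\xi'}f(v)$, one obtains
\begin{equation*}
\#\bigl((\Lambda+\xi)\cap R_{T,c}\bigr)=\int_0^{\log T}\widetilde{f}\bigl(a_t(\Lambda+\xi)\bigr)\,dt + O(1),
\end{equation*}
with the affine Siegel mean-value identity $\int_{Y_d}\widetilde{f}\,d\nu=\int_{\R^d}f\,d\mathbf{x}$ producing the predicted main term $cB_mC_n\log T$.

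Next I would establish a Rogers-type second-moment bound for $\widetilde{f}$ on $Y_d$. In the affine setting this is cleaner than on $X_d$ because the torus $\R^d/\Lambda$ acts by translation on $Y_d$ and Fourier expansion in $\xi$ diagonalises $\widetilde{f}$; a routine calculation then gives
\begin{equation*}
\int_{Y_d}\bigl|\widetilde{f}-\textstyle\int f\bigr|^2\,d\nu\;\leq\;C_d\bigl(\norm{f}_1^2+\norm{f}_2^2\bigr).
\end{equation*}
Combined with $\nu$-invariance of $a_t$ and a dyadic decomposition along the orbit (handling the overlap/boundary errors via thickening $W$ on both sides, absorbed into the final $o$-term), this yields a variance estimate of the form $\int_{Y_d}\bigl|\int_0^{T_0}(\widetilde{f}\circ a_t-\int f)\,dt\bigr|^2 d\nu\ll T_0$.

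I would then convert this variance bound into a pointwise rate by a quantitative Borel--Cantelli / Gal--Koksma argument. Passing to the subsequence $T_k=\exp(k)$ and applying Chebyshev with the standard iterated-logarithm summation device, one gets that for $\nu$-a.e.\ $\Lambda+\xi$ and $N_k=\log T_k$,
\begin{equation*}
\int_0^{N_k}\widetilde{f}\circ a_t\,dt = N_k\textstyle\int f + o\bigl(N_k^{1/2}(\log N_k)^{3/2}(\log\log N_k)^{1/2+\epsilon}\bigr).
\end{equation*}
Monotonicity of $\#\bigl((\Lambda+\xi)\cap R_{T,c}\bigr)$ in $T$ interpolates from the subsequence to all $T$, and substituting $N=\log T$, $\log N=\log\log T$, $\log\log N=\log\log\log T$, then dividing by $N$, yields exactly the claimed rate.

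The main obstacle will be the affine Siegel variance bound together with its stability when $f$ is a smooth approximation to a characteristic function whose support is rescaled by $a_t$: one must verify that the implied constants depend only on the $L^1$ and $L^2$ norms of the fixed window $f$ (these being $a_t$-invariant in the relevant sense), and that the inner products arising from the $\xi$-Fourier expansion do not contribute a worse-than-linear dependence on $T_0$. A secondary but routine subtlety is the boundary contribution from $\norm{\mathbf{y}}=T$ and from the overlaps between consecutive $a_t$-translates of $W$, both of which I would control by a standard two-sided thickening of $f$ and absorb into the error term.
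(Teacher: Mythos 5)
Your overall strategy is the same as the paper's: a Rogers-type second-moment identity for the Siegel transform on $Y_d$, an ergodic average along the diagonal flow whose centered second moment grows linearly, and a Gal--Koksma-type theorem (the paper invokes Gaposhkin, theorem \ref{Gaposhkin}) to convert that into the pointwise iterated-logarithm rate. The paper's implementation is leaner in two respects: it works in discrete time with the exact indicator $\mathbbm{1}_{R_{T,c}}$, for which the translates $g_{\log T}^{-i}R_{T,c}$ tile $R_{T^N,c}$ exactly, so there are no bump functions, no thickenings, and no overlap or boundary corrections to absorb; and it simply quotes the identity $\int_{Y_d}\sum_{\mathbf{v}\neq\mathbf{w}\in\Lambda+\mathbf{\xi}}F(\mathbf{v},\mathbf{w})\,d\nu=\int_{\R^{2d}}F\,d\mathbf{x}$ of El-Baz--Marklof--Vinogradov (lemma \ref{EBMV}) rather than re-deriving it by Fourier expansion in $\mathbf{\xi}$. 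These are cosmetic differences; your route would work.

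The one step you must repair is the passage from your single-window bound to the linear variance estimate. As written, the inequality $\int_{Y_d}\bigl|\widehat{f}-\int f\bigr|^2\,d\nu\leq C_d\bigl(\norm{f}_1^2+\norm{f}_2^2\bigr)$ applied window-by-window and ``combined with $\nu$-invariance'' yields only $O(T_0^2)$ for the integrated process, since invariance alone provides no decorrelation between distinct times. The correct route, and the paper's, is to apply the \emph{exact} affine second-moment identity to the aggregated function $F_{T_0}=\int_0^{T_0}f\circ a_t\,dt$ (equivalently, to $\mathbbm{1}_{R_{T^N,c}}$): one has $\int_{Y_d}\widehat{F}^2\,d\nu=\bigl(\int F\bigr)^2+\int F^2$, so after centering, the $\bigl(\int F\bigr)^2=O(T_0^2)$ term cancels exactly and the variance equals $\norm{F_{T_0}}_2^2$, which is $O(T_0)$ precisely because the window translates are essentially disjoint. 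The $\norm{f}_1^2$ term in your displayed inequality is exactly the term that must cancel; retaining it as an upper bound and then aggregating destroys the linearity in $T_0$ that theorem \ref{Gaposhkin} requires. With that cancellation made explicit, your argument goes through and coincides with the paper's.
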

	\subsection{Structure of Paper}
	In section \ref{meanvaluetheorems}, we provide necessary background on the homogeneous spaces of unimodular lattices and integrals of some functions on these spaces. We employ the use of a theorem of Gaposhkin (theorem \ref{Gaposhkin}) on the relationship between the second moment of an ergodic sum and the convergence of said sum, introduced in section \ref{ergThm}. In most cases, Rogers' theorem (theorem \ref{RMVT}) is suitable to calculate the second moment, but this theorem fails to hold on $X_2$. Section \ref{2d} is dedicated to using a theorem of Kleinbock and Yu (theorem \ref{KleinbockYu}) to calculate the second moment in the primitive two-dimensional case. In section \ref{further}, the results for counting lattice approximates for systems of linear forms and affine lattices are given, the latter using a result of El-Baz, Marklof and Vinogradov (lemma \ref{EBMV}).
	\section{The Homogeneous Space of Unimodular Lattices}\label{meanvaluetheorems}
	\subsection{Mean Value Theorems on the Space of Unimodular Lattices}
	Let $d\geq 2$, $G_d=\SL_{d}(\R)$, $\Gamma_d=\SL_{d}(\Z)$ and consider the quotient space $G_d/\Gamma_d$. This space can be identified with the homogeneous space $X_d$ via the mapping $g\Gamma_d \mapsto g\Z^{d}$, where $g\in G_d$. As shown in section \ref{dani}, a vector $\mathbf{x}\in\R^{d-1}$ can be identified with a lattice $\Lambda_\mathbf{x} \in X_{d}$.
	
	For $d\geq 2$, let $f:\R^{d} \rightarrow \R$ be a Riemann-integrable with compact support and $\Lambda \in X_d$. The \textit{Siegel transform} of $f$, denoted $\widehat{f}$, is defined as
	\begin{equation*}
		\widehat{f}(\Lambda) = \sum_{\mathbf{v}\in\Lambda \setminus \{\mathbf{0}\}}f(\mathbf{v})
	\end{equation*}
	A theorem of Siegel shows sufficient conditions for a Siegel transform to be integrable and provides a formula to calculate said integral. Let $d\mathbf{x}$ denote the Lebesgue measure on $\R^d$.
	\begin{theorem}[{{\cite[Siegel]{siegelMeanValueTheorem1945}, see also \cite{macbeathSiegelMeanValue1958}}}]\label{SMVT}
		Let $f \in L^1(\R^{d},d\mathbf{x})$. Then $\widehat{f} \in L^1(X_d,\mu)$ and
		\begin{equation*}
			\int_{\R^d} f(\mathbf{x})\,d\mathbf{x} = \int_{X_d} \widehat{f}(\Lambda)\,d\mu(\Lambda)
		\end{equation*}
	\end{theorem}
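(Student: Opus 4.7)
The plan is to first establish the identity for $f$ a non-negative continuous function of compact support contained in $\R^d \setminus \{\mathbf{0}\}$, using the uniqueness of invariant Radon measures, and then to extend to $f \in L^1(\R^d, d\mathbf{x})$ by monotone convergence and linearity. The key structural observation is the covariance
\[
\widehat{f \circ g}(\Lambda) = \widehat{f}(g\Lambda) \qquad \text{for all } g \in \SL_d(\R),
\]
which is immediate from the definition of $\widehat{f}$ together with the fact that $g$ maps $\Lambda$ to another unimodular lattice.

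Define the positive linear functional $L(f) := \int_{X_d} \widehat{f}(\Lambda)\,d\mu(\Lambda)$ on non-negative $f \in C_c(\R^d \setminus \{\mathbf{0}\})$. Combining the covariance with the left $\SL_d(\R)$-invariance of $\mu$ gives $L(f \circ g) = L(f)$ for every $g$, so the Radon measure on $\R^d \setminus \{\mathbf{0}\}$ representing $L$ via Riesz representation is $\SL_d(\R)$-invariant. For $d \geq 2$, the group $\SL_d(\R)$ acts transitively on $\R^d \setminus \{\mathbf{0}\}$, and the stabiliser of a point is isomorphic to $\SL_{d-1}(\R) \ltimes \R^{d-1}$, which is unimodular. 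Hence there is a unique $\SL_d(\R)$-invariant Radon measure on $\R^d \setminus \{\mathbf{0}\}$ up to positive scalar, namely Lebesgue measure, and so
\[
L(f) \;=\; c \int_{\R^d} f(\mathbf{x})\,d\mathbf{x}
\]
for some constant $c \geq 0$ depending only on the chosen normalisations.

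To pin down $c = 1$, I would test against a specific function. One option is to feed in the characteristic function of a thin spherical shell and compute $\int_{X_d} \widehat{f}\,d\mu$ directly on a Siegel fundamental domain for $\SL_d(\Z) \backslash \SL_d(\R)$ using the Iwasawa decomposition; alternatively one invokes the Minkowski/Weyl type asymptotic $\#(\Lambda \cap B_d(\mathbf{0}, R))/\textnormal{vol}_d(B_d(\mathbf{0},R)) \to 1$ as $R \to \infty$ for $\mu$-a.e.\ unimodular $\Lambda$ and compares with $L$ applied to suitable indicator-like test functions. Either route matches the Haar measure normalisation on $\SL_d(\R)$ descending to $\mu$ with Lebesgue measure on $\R^d$ so as to force $c = 1$.

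For the extension to $L^1$: given $f \in L^1$ reduce to $f \geq 0$ by splitting into positive and negative parts (and real/imaginary parts if necessary), and choose an increasing sequence $f_n \in C_c(\R^d \setminus \{\mathbf{0}\})$ with $f_n \uparrow f$ pointwise almost everywhere. Then $\widehat{f_n} \uparrow \widehat{f}$ pointwise on $X_d$, so monotone convergence applied to both sides of $L(f_n) = \int_{\R^d} f_n\,d\mathbf{x}$ yields simultaneously that $\widehat{f} \in L^1(X_d, \mu)$ and that the identity persists in the limit. The main obstacle is fixing the constant $c = 1$: the abstract uniqueness argument essentially gives the identity for free up to a scalar, but matching the two measure normalisations is the geometric heart of Siegel's original argument; once this is secured, the passage from compactly supported continuous test functions to $L^1$ is routine.
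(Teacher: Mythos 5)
The paper does not prove this theorem: it is imported as a classical result of Siegel, with \cite{macbeathSiegelMeanValue1958} cited for a proof, so there is no in-paper argument to measure you against. Your sketch follows the standard Weil-type route --- covariance $\widehat{f\circ g}(\Lambda)=\widehat{f}(g\Lambda)$, uniqueness of the $\SL_d(\R)$-invariant Radon measure on the transitive space $\R^d\setminus\{\mathbf{0}\}$ (the stabiliser computation is correct), normalisation of the constant, then monotone convergence --- and this route does work for $d\geq 2$, which is the standing hypothesis in the relevant section (for $d=1$ the statement is false as written, and your transitivity step is exactly where that restriction enters).

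The genuine gap is at the very first step: before you can apply the Riesz representation theorem you must know that $L(f)=\int_{X_d}\widehat{f}\,d\mu$ is \emph{finite} for $f\in C_c(\R^d\setminus\{\mathbf{0}\})$, so that $L$ is a real-valued positive functional and hence represented by a Radon measure. This is not formal: $X_d$ is non-compact, $\widehat{f}$ is unbounded on lattices with short vectors, and a priori $L$ could be identically $+\infty$, in which case the uniqueness argument says nothing. The standard fix is reduction theory: one bounds $\widehat{f}(\Lambda)\leq C_f\,\alpha(\Lambda)$, where $\alpha(\Lambda)$ is the supremum of reciprocal covolumes of sublattices of $\Lambda$, and shows $\alpha\in L^1(X_d,\mu)$ for $d\geq 2$ by integrating over a Siegel set. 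That same estimate is what you need as a dominating function to justify the limit interchange in your normalisation step, so it cannot be sidestepped; together with pinning down $c=1$ it is the analytic heart of the theorem. Two smaller points: to deduce $\widehat{f_n}\uparrow\widehat{f}$ from $f_n\uparrow f$ merely almost everywhere, you need that $\mu$-almost every lattice avoids a fixed Lebesgue-null set (a Fubini argument on $\SL_d(\R)$ applied to each orbit map $g\mapsto g\mathbf{v}_0$, $\mathbf{v}_0\in\Z^d\setminus\{\mathbf{0}\}$, not an appeal to the theorem being proved); and if you normalise via a counting asymptotic, use the elementary fact that $\#\left(\Lambda\cap B_d(\mathbf{0},R)\right)\sim\textnormal{vol}_d\left(B_d(\mathbf{0},R)\right)$ for \emph{every} unimodular lattice, since the almost-everywhere version you mention is usually itself a corollary of Siegel's theorem and would make the argument circular.
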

	For our purposes, observe that for any measurable set $B \subset \R^d\setminus \{\mathbf{0}\}$,
	\begin{equation*}
		\int_{X_d} \widehat{\mathbbm{1}}_B(\Lambda)\,d\mu(\Lambda) = \int_{\R^d}\mathbbm{1}_B(\mathbf{x})\,d\mathbf{x} = \text{vol}_d(B)
	\end{equation*}
	Theorem \ref{SMVT} was partially generalised by Rogers, providing an equation for more general functions on $X_d$, where $d\geq 3$. 
	\begin{theorem}[{{Rogers \cite[Theorem 4]{rogersMeanValuesSpace1955} \cite{macbeathSiegelMeanValue1958}}}]
		\label{RMVT}
		Let $d \geq 3$, and suppose $\rho:(\R^d)^k \rightarrow \R$ is non-negative and Borel measurable, for some $1 \leq k \leq d-1$. Then
		\begin{equation}
			\label{Rogers}
			\begin{gathered}
				\int_{X_d} \sum_{x_1,\dots,x_k \in \Lambda} \rho(x_1,\dots,x_k)\,d\mu(\Lambda) = \rho(\mathbf{0}) + \int_{\R^d}\cdots \int_{\R^d} \rho(x_1,\cdots,x_k)\,dx_1\cdots dx_k + \\
				\sum_{(\nu;\mu) \in \mathcal{S}} \sum_{q=1}^\infty \sum_{D \in \Phi(\mathcal{S},q)} \bigg(\frac{e_1}{q}\cdots\frac{e_m}{q}\bigg)^d \int_{\R^d}\cdots \int_{\R^d} \rho\Bigg( \sum_{i=1}^m \frac{d_{i1}}{q}x_i , \dots, \sum_{i=1}^m \frac{d_{ik}}{q}x_i \Bigg)\, dx_1\cdots dx_m
			\end{gathered}
		\end{equation}
		where $\mathcal{S}$ is the set of all partitions of $\{1,\dots, k\}$ into two increasing sequences $(\nu_1,\cdots,\nu_m)$, $(\mu_1,\dots,\mu_{m-k})$ for some $1 \leq m \leq k-1$, where $\Phi((\nu;\mu),q)$ is the set of all $D \in \textnormal{Mat}_{m,k}(\Z)$ with elements having highest common factor relatively prime to $q$, with
		\begin{gather*}
			d_{i{\nu_{j}}} = q\delta_{ij} \\
			d_{i{\mu_j}} = 0 \quad \text{if  } \mu_j < \nu_i
		\end{gather*}
		for $1 \leq i \leq m$, $1 \leq j \leq k-m$, and where $e_i = \gcd(\epsilon_i,q)$ for $1\leq i \leq m$, for elementary divisors $\epsilon_i$ of the matrix $D \in \Phi((\nu;\mu),q)$ and $\delta_{ij}$ is the Kronecker delta function.
	\end{theorem}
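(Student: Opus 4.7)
The plan is to unfold the integral over $X_d = G_d/\Gamma_d$ against a fundamental domain $F \subset G_d$. Writing $\Lambda = g\Z^d$ and $x_i = gu_i$ with $u_i \in \Z^d$, the left-hand side of \eqref{Rogers} becomes
\[
\int_F \sum_{(u_1,\ldots,u_k) \in (\Z^d)^k} \rho(gu_1,\ldots,gu_k)\, dg,
\]
so the strategy is to decompose $(\Z^d)^k$ into $\Gamma_d$-orbits under the diagonal action and compute the contribution of each.

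A $k$-tuple $(u_1,\ldots,u_k)$ is $\Gamma_d$-classified by the rank $m$ of its $\Z$-span, an increasing subsequence $(\nu_1,\ldots,\nu_m) \subset \{1,\ldots,k\}$ selecting the ``pivot'' indices whose images form a $\Q$-basis of the span, a common denominator $q$, and a matrix $D \in \textnormal{Mat}_{m,k}(\Z)$ expressing all coordinates in terms of the pivots after rescaling by $q$. Applying a suitable element of $\Gamma_d$ normalises $D$ to satisfy $d_{i\nu_j} = q\delta_{ij}$ and $d_{i\mu_j}=0$ for $\mu_j < \nu_i$, and the coprimality condition defining $\Phi((\nu;\mu),q)$ is exactly what is needed to make the representative unique. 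The orbit $\{\mathbf{0}\}$ contributes $\rho(\mathbf{0})$; the orbits of full rank $m=k$ aggregate via the standard Siegel-style unfolding (a linearly independent $k$-tuple together with $\Gamma_d$ tiles $G_d$) into the Lebesgue integral $\int_{(\R^d)^k}\rho(x_1,\ldots,x_k)\,dx_1\cdots dx_k$.

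For each intermediate orbit parameterised by $((\nu;\mu),q,D)$ with $1 \le m \le k-1$, use left-$G_d$-invariance of Haar measure to convert the integral over $F$ summed over the $\Gamma_d$-orbit into an integral over $G_d$ of the single canonical representative, and then change variables $x_i = g u_{\nu_i}$ for $1 \le i \le m$. This yields an integral of $\rho\bigl(\sum_i (d_{i1}/q)x_i,\ldots,\sum_i (d_{ik}/q)x_i\bigr)$ over $(\R^d)^m$ weighted by the Jacobian $(e_1/q\cdots e_m/q)^d$, where $e_i = \gcd(\epsilon_i,q)$ records the index of the sublattice spanned by the canonical representative inside its primitive envelope. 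Summing the orbit contributions produces the right-hand side of \eqref{Rogers}.

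The hard part will be the orbit classification: showing that every nontrivial $k$-tuple has exactly one representative of the stated canonical form, and that the coprimality condition on $D$ is precisely what enforces uniqueness of both the rescaling $q$ and the normalised matrix. This requires combining the Smith normal form applied to the integer $m\times k$ coefficient matrix with a Hermite-style row reduction in $\SL_d(\Z)$ to achieve the lower-triangular shape $d_{i\mu_j}=0$ for $\mu_j < \nu_i$, and tracking how the elementary divisors $\epsilon_i$ interact with $q$ to produce the factor $e_i/q$. Once the parameterisation is established, the change-of-variables Jacobian and the $G_d$-invariance steps are routine, and assembling the terms against \eqref{Rogers} is essentially bookkeeping.
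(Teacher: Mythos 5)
First, a point of reference: the paper does not prove Theorem \ref{RMVT} at all --- it is quoted from Rogers \cite{rogersMeanValuesSpace1955} and used as a black box --- so there is no internal proof to compare against. Your outline does follow the strategy of Rogers' original argument and of its modern expositions: unfold the integral over $X_d$ to a fundamental domain, decompose $(\Z^d)^k$ into orbits of the diagonal $\SL_d(\Z)$-action, classify orbits by rank, pivot indices, denominator $q$ and coefficient matrix $D$, and convert each orbit sum into a Euclidean integral by invariance of Haar measure. The architecture is the right one.

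As written, however, this is a plan rather than a proof, and the deferred steps are exactly where the content of the theorem lies. (i) The orbit classification --- existence and uniqueness of a canonical representative $(q,D)$ with $D\in\Phi((\nu;\mu),q)$ --- is only asserted; you correctly identify it as the hard part, but nothing is actually established. (ii) The factor $(e_1\cdots e_m/q^m)^d$ is not a change-of-variables Jacobian in the usual sense: it arises as the reciprocal index of the sublattice of $(\Z^d)^m$ consisting of pivot tuples $(u_1,\dots,u_m)$ for which every combination $\sum_i (d_{ij}/q)u_i$ is integral, and identifying that index with $\prod_i (q/e_i)^d$ via the Smith normal form is a genuine computation, not bookkeeping. (iii) The hypotheses $d\geq 3$ and $k\leq d-1$ never appear in your sketch, yet they are essential: $m\leq k\leq d-1$ is what makes $\SL_d(\R)$ act transitively on linearly independent $m$-tuples, so that the pushforward of Haar measure under $g\mapsto(gv_1,\dots,gv_m)$ is a multiple of Lebesgue measure on $(\R^d)^m$; without this the unfolding step fails, and indeed the formula is false for $d=2$, which is precisely why the paper needs the separate analysis of Section \ref{2d}. (iv) The parenthetical ``a linearly independent $k$-tuple together with $\Gamma_d$ tiles $G_d$'' is not a correct statement; what is true is that the orbit map factors through $G_d/\mathrm{Stab}(v)$, and that summing over \emph{all} full-rank orbits (again indexed by Smith-normal-form data) telescopes, via identities of the type $\zeta(d)^{-1}\sum_{q\geq1}q^{-d}=1$, into the single Lebesgue term with coefficient $1$. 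Until (i)--(iv) are carried out, the proposal should be regarded as a correct road map to Rogers' proof rather than a proof.
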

	In particular, this theorem allows us to calculate the integral of $\widehat{f}^k$ over $X_d$, for any $d\geq 3$ and $1 \leq k \leq d-1$.
	\subsection{Second Moments of Characteristic Functions on the Space of Unimodular Lattices}\label{setup}
	Suppose $(X,\mu)$ is a probability space and let $\phi$ be an ergodic transformation on $X$. For $f \in L^2(X,\mu)$, the $n^\text{th}$ correlation coefficient is defined as
	\begin{equation*}
		b_n(f,\phi) = \int_X (f\circ \phi^n)f\, d\mu 
	\end{equation*}
	We will calculate the decay of these correlations and employ theorem \ref{Gaposhkin} to obtain the rate of convergence of an ergodic sum. Notice that 
	\begin{equation*}
		\norm{\sum_{k=0}^{N-1} f\circ\phi^k}_2^2 = Nb_0(f,\phi)+2\sum_{k=1}^{N-1} (N-k)b_k(f,\phi) \in O(N^2 b_N(f,\phi))
	\end{equation*}
	so it suffices to calculate the asymptotic behaviour in $n$ of the second moment of an ergodic sum. 
	\begin{lemma}
		\label{lemma}
		Let $d\geq 3$ and $B \subset \R^{d}\setminus\{0\}$ be measurable. Then 
		\begin{equation*}
			\int_{X_{d}} \widehat{\mathbbm{1}}_{B}(\Lambda)^2 \,d\mu(\Lambda) \leq \left(\textnormal{vol}_d(B) \right)^2 + O_d\big(\textnormal{vol}_d(B)\big)
		\end{equation*}
		\begin{proof}
			We apply theorem \ref{RMVT} with $k=2$ and $\rho(x,y) = \mathbbm{1}_{B}(x)\mathbbm{1}_B(y)$. Note that, since $\mathbf{0}\notin B$,
			\begin{equation*}
				\sum_{x,y \in \Lambda} \mathbbm{1}_{B}(x) \mathbbm{1}_{B}(y) = \bigg(\sum_{x \in \Lambda} \mathbbm{1}_{B}(x)\bigg)\bigg( \sum_{y \in \Lambda} \mathbbm{1}_{B}(y)\bigg) =  \widehat{ \mathbbm{1}}_B(\Lambda) ^2
			\end{equation*}Since $k=2$, we have $m=1$ and $\mathcal{S} = \{ (1;2), (2;1) \}$. For each element of $\mathcal{S}$, any matrix $D$ is of size $1\times 2$ with coprime entries. For any $q \in \N$, if  $D \in \Phi((1;2),q)$ then $D = (q,r)$ for some $r\in\Z$ with $\gcd(q,r) =1$. To compute the elementary divisor of $D$, note that equivalent matrices have the same elementary divisors \cite{newmanSmithNormalForm1997}. Since $\gcd(q,r) =1$, $D$ is equivalent to $(1,0)$, so $1$ is the elementary divisor of $D$.
			Suppose now that $D \in \Phi((2;1),q)$. Writing $D=(d_{11},d_{22})$, we have $d_{12}=q$ and $d_{11}=0$, but $\gcd(\gcd(0,q),q)=q$, so $\Phi((2;1),q)$ is empty. We can bound equation \ref{Rogers} by 
			\begin{align*}
				\int_{X_{d}} \widehat{\mathbbm{1}}_B(\Lambda)^2\,d\mu(\Lambda) &=  \Bigg(\int_{\R^{d}} \mathbbm{1}_B(x)\,dx \Bigg)^2+ 
				\sum_{q=1}^\infty \sum_{\substack{r \in \Z \\ \gcd(q,r)=1}} \bigg(\frac{1}{q}\bigg)^{d}\int_{\R^{d}} {\mathbbm{1}}_B( x){\mathbbm{1}}_B\Big(\frac{rx}{q}\Big) \,dx \\
				&=  \left(\textnormal{vol}_d(B)\right)^2+
				\sum_{q=1}^\infty \sum_{\substack{r \in \Z \\ \gcd(q,r)=1}} \int_{\R^{d}} {\mathbbm{1}}_B( qx){\mathbbm{1}}_B(rx) \,dx \\
				&\leq \left(\textnormal{vol}_d(B)\right)^2 + \sum_{q=1}^\infty \sum_{r\in \Z\setminus\{0\}} \int_{\R^{d}} \mathbbm{1}_B(qx)\mathbbm{1}_B(rx)\,dx \\
			\end{align*}
			Finally, using H\"older's inequality, 
			\begin{gather*}
				\sum_{q=1}^\infty \sum_{r\in \Z\setminus\{0\}} \int_{\R^{d}} \mathbbm{1}_B(qx)\mathbbm{1}_B(rx)\,dx\\\leq  \sum_{q=1}^\infty \Bigg( \int_{\R^{d}} \mathbbm{1}_{B}(qx) \,dx\Bigg)^\frac{1}{2}\sum_{r\in \Z\setminus\{0\}} \Bigg( \int_{\R^{d}} \mathbbm{1}_{B}(rx) \,dx\Bigg)^\frac{1}{2} \\
				= \sum_{q=1}^\infty \Bigg( \frac{1}{q^{d}}\int_{\R^{d}} \mathbbm{1}_{B}(x)\,dx \Bigg)^\frac{1}{2}\sum_{r\in \Z\setminus\{0\}}  \Bigg( \frac{1}{|r|^{d}}\int_{\R^{d}} \mathbbm{1}_{B}(x)\,dx \Bigg)^\frac{1}{2}  \\
				= 2\zeta\Big(\frac{d}{2}\Big)^2 \text{vol}_d(B)
			\end{gather*}
		\end{proof}
	\end{lemma}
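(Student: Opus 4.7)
The natural strategy is to apply Rogers' theorem (Theorem \ref{RMVT}) with $k=2$ and the product test function $\rho(x,y)=\mathbbm{1}_B(x)\mathbbm{1}_B(y)$. Because $\mathbf{0}\notin B$, the double sum over $\Lambda\times\Lambda$ factors as $\widehat{\mathbbm{1}}_B(\Lambda)^2$, so the left-hand side of the claim is exactly what Rogers' formula evaluates. The first two terms on the right of (\ref{Rogers}) produce $\rho(\mathbf{0})=0$ and the square of the Lebesgue integral, which yields the desired main term $(\textnormal{vol}_d(B))^2$.

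The real work lies in controlling the ``diagonal'' contributions parametrised by $\mathcal{S}$, $q$, and $\Phi(\mathcal{S},q)$. With $k=2$ one must have $m=1$, so $\mathcal{S}=\{(1;2),(2;1)\}$ and each $D$ is a $1\times 2$ integer matrix. I would unpack the definitions: for $(1;2)$ the constraint $d_{11}=q$ together with primitivity forces $D=(q,r)$ with $\gcd(q,r)=1$; since this $D$ is equivalent to $(1,0)$, its unique elementary divisor is $1$, giving $e_1=1$. For $(2;1)$ the constraints $d_{12}=q$ and $d_{11}=0$ make the rows fail primitivity (their gcd with $q$ is $q$), so $\Phi((2;1),q)$ is empty and contributes nothing. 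This reduces Rogers' sum to
\begin{equation*}
\sum_{q=1}^{\infty}\sum_{\substack{r\in\Z\\\gcd(q,r)=1}} q^{-d}\int_{\R^d}\mathbbm{1}_B(x)\mathbbm{1}_B(rx/q)\,dx.
\end{equation*}

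To finish, I would drop the coprimality constraint (bounding above), substitute $x\mapsto x/q$ to rewrite the integrand as $\mathbbm{1}_B(qx)\mathbbm{1}_B(rx)$ without the $q^{-d}$ prefactor, and then apply Cauchy--Schwarz on each integral to decouple the $q$-sum from the $r$-sum. Each factor scales as $q^{-d/2}$ or $|r|^{-d/2}$ times $\textnormal{vol}_d(B)^{1/2}$, so the sums assemble to $2\zeta(d/2)^2\,\textnormal{vol}_d(B)$, finite precisely because $d\geq 3$ makes $d/2>1$.

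The main conceptual obstacle is the bookkeeping of Rogers' theorem for $k=2$: correctly identifying $\mathcal{S}$, ruling out $(2;1)$ via the primitivity condition, and extracting the elementary divisor for $(1;2)$. Once that combinatorial reduction is made, the estimate is a routine application of Cauchy--Schwarz together with the convergence of $\zeta(d/2)$ for $d\geq 3$; the hypothesis $d\geq 3$ is used both to invoke Rogers' theorem and to make the resulting arithmetic sum converge.
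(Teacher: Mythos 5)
Your proposal is correct and follows essentially the same route as the paper's proof: Rogers' theorem with $k=2$ and $\rho(x,y)=\mathbbm{1}_B(x)\mathbbm{1}_B(y)$, the same combinatorial reduction showing $\Phi((2;1),q)=\emptyset$ and $e_1=1$ for $(1;2)$, followed by dropping coprimality, rescaling, and Cauchy--Schwarz to obtain the bound $2\zeta(d/2)^2\,\textnormal{vol}_d(B)$. No substantive differences to report.
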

	\section{Effective Ergodic Theorems}\label{ergThm}
	\subsection{Effective Theorem for Siegel Transforms}
	Our method of proving theorems \ref{main}, \ref{prim}, \ref{effective2} and \ref{effective3} uses the following result derived from a theorem of Gaposhkin.
	\begin{theorem}[{{Gaposhkin \cite[Theorem 3(iv)]{gaposhkinDependenceConvergenceRate1982}, Kachurovskii \cite[Theorem 15(ii)]{kachurovksiiRateCongergenceErgodic}}}]
		\label{Gaposhkin}
		Let $f\in L^2(X,\mu)$ and let $\phi$ be an ergodic transformation of $X$. If
		\begin{equation}
			\label{kaccond}
			\int_{X} \Bigg( \sum_{n=0}^{N-1}\bigg(f\circ \phi^n - \int_X f\,d\mu\bigg) \Bigg)^2 \, d\mu \in O_{f,\phi}(N) 
		\end{equation}
		Then for any $\epsilon >0$, and $\mu$-a.e. $x \in X$,
		\begin{equation}
			\label{effective}
			\frac{1}{N} \sum_{n=0}^{N-1} f\big( \phi^n(x)\big) = \int_X f \,d\mu + o_{f,\phi}\big(N^{-\frac{1}{2}}(\log N)^{\frac{3}{2}} (\log \log N)^{\frac{1}{2}+\epsilon}\big)
		\end{equation}
	\end{theorem}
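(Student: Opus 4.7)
The plan is to apply Chebyshev's inequality and the Borel--Cantelli lemma along a dyadic subsequence, combined with a maximal inequality of Menshov--Rademacher type. After replacing $f$ by $f - \int_X f\,d\mu$, I may assume $\int_X f\,d\mu = 0$ and write $S_N(x) := \sum_{n=0}^{N-1} f(\phi^n x)$, so that the hypothesis (\ref{kaccond}) becomes $\|S_N\|_2^2 \leq C_{f,\phi}\,N$. The key structural feature is that $\phi$-invariance of $\mu$ upgrades this into a uniform block bound: for $0 \leq a \leq b$,
$$\|S_b - S_a\|_2^2 \;=\; \|S_{b-a}\circ\phi^a\|_2^2 \;=\; \|S_{b-a}\|_2^2 \;\leq\; C_{f,\phi}(b-a).$$

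Next I would invoke the M\'oricz--Serfling variant of the Rademacher--Menshov maximal inequality, which applies to any square-integrable sequence whose partial-sum blocks satisfy such an additive $L^2$-bound uniformly in the starting index, and yields, for each $M \geq 1$,
$$\Bigl\|\max_{1 \leq N \leq M}|S_N|\Bigr\|_2^2 \;\leq\; C_{f,\phi}\,(\log M)^2 \cdot M.$$
Applying this with $M = 2^k$ and using Chebyshev gives
$$\mu\Bigl(\max_{N \leq 2^k}|S_N| > 2^{k/2}\,k\,\tilde\psi(k)\Bigr) \;\leq\; C_{f,\phi}\,\tilde\psi(k)^{-2},$$
and the choice $\tilde\psi(k) = k^{1/2}(\log k)^{1/2+\epsilon/2}$ renders the right-hand side summable in $k$. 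The Borel--Cantelli lemma then yields, for $\mu$-a.e.\ $x$ and all sufficiently large $k$, $\max_{N \leq 2^k}|S_N(x)| \ll_{f,\phi,x} 2^{k/2}\,k^{3/2}(\log k)^{1/2+\epsilon/2}$.

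For arbitrary $N$, choosing $k$ with $2^{k-1} \leq N < 2^k$ gives $k \asymp \log N$ and $\log k \asymp \log\log N$, whence $|S_N(x)| \ll N^{1/2}(\log N)^{3/2}(\log\log N)^{1/2+\epsilon/2}$; dividing by $N$ then produces the claimed rate (\ref{effective}). The principal obstacle is the maximal inequality in the second step: since $\{f\circ\phi^n\}$ need not be an orthogonal sequence, one cannot invoke the classical Rademacher--Menshov theorem for orthogonal series and instead needs a chaining argument for uniformly quasi-orthogonal sequences. This chaining costs exactly the factor $\log M$ in $L^2$, which is precisely what inflates the expected $(\log N)^{1/2}$ law-of-iterated-logarithm rate to the $(\log N)^{3/2}$ appearing in the statement, while the $(\log\log N)^{1/2+\epsilon}$ refinement is extracted by the optimal tuning of $\tilde\psi$ inside the Borel--Cantelli sum.
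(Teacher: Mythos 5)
The paper does not prove this statement; it is quoted verbatim from Gaposhkin and Kachurovskii and used as a black box, so there is no internal proof to compare against. Your argument is a correct, self-contained derivation, and it follows the same skeleton as the proofs in the cited references (Gaposhkin's Lemma~7 is precisely a Rademacher--Menshov-type maximal estimate applied along dyadic scales). The three load-bearing steps all check out: the stationarity identity $\norm{S_b-S_a}_2=\norm{S_{b-a}}_2$ is exactly where measure-preservation enters and upgrades the hypothesis $\norm{S_N}_2^2=O(N)$ to the uniform-in-$a$ block bound needed for the M\'oricz--Serfling inequality with the (super)additive majorant $g(a,b)=C(b-a)$; the resulting $\bigl\lVert\max_{N\le M}|S_N|\bigr\rVert_2^2\le C(\log M)^2M$ combined with Chebyshev at level $2^{k/2}k\,\tilde\psi(k)$ gives tails $O(\tilde\psi(k)^{-2})$, summable for $\tilde\psi(k)=k^{1/2}(\log k)^{1/2+\epsilon/2}$; and the interpolation $2^{k-1}\le N<2^k$ converts the dyadic bound into $|S_N|\ll N^{1/2}(\log N)^{3/2}(\log\log N)^{1/2+\epsilon/2}$, which is $o$ of the stated rate since $\epsilon/2<\epsilon$. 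Your accounting of where the exponent $3/2$ comes from (one power of $\log$ from the maximal inequality, one half from the Borel--Cantelli tuning) matches the second remark in the paper, which notes that any $\Psi$ with $\int_1^\infty (t\Psi(t))^{-1}dt<\infty$ yields an error term $o(\sqrt{\Psi(T)/T}\,\log T)$ --- exactly the structure your $\tilde\psi$ realises. The only point worth making explicit in a written version is the precise statement of the M\'oricz--Serfling inequality you invoke, since the classical Rademacher--Menshov theorem for orthogonal series does not apply directly here, as you correctly note.
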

	This theorem was originally stated by Gaposhkin for mean-square continuous wide-sense stationary random processes, and later stated by Kachurovskii for functions $f\in L^2(X,\mu)$ satisfying $\int_Xf d\mu =0$. In the context of homogeneous spaces, this theorem has been used by Kleinbock, Shi and Weiss to give error terms for the convergence of the ergodic integral
	\begin{equation*}
	\frac{1}{T}\int_0^T \phi\left(g_t u(\vartheta)\Lambda \right)dt = \int_X \phi d\mu + o\left(T^{-\frac{1}{2}}\log^{\frac{3}{2}+\epsilon} T \right)
	\end{equation*}
	for any smooth, compactly supported functions on $X_d$ for $d\geq 2$ and for almost every $\vartheta \in \text{Mat}_{m,n}(\R)$ (to be contrasted with the measure used in theorem \ref{effective2}), leading to a generalisation of theorem \ref{APT1} in the non-equal weight case  \cite{kleinbockPointwiseEquidistributionError2017}.
	\begin{proof}[Proof of theorem \ref{main}]
		For $t \in \R$, define 
		\begin{equation*}
			g_t = \begin{pmatrix}
				e^t \text{Id}_{d-1} & \mathbf{0} \\ \mathbf{0}^\textnormal{T} & e^{-t(d-1)}
			\end{pmatrix}
		\end{equation*}
		Then $\{g_t\}_{t\in\R}$ defines a flow on $X_{d}$ via left translation: $g_t\cdot g\Gamma_d = (g_tg)\Gamma_d$. By Moore's Ergodicity Theorem \cite[Theorem 3]{mooreErgodicityFlowsHomogeneous1966} (see also \cite[\S3.2 Theorem 2.1]{bekkaErgodicTheoryTopological2000}), since the closure of $\{g_t\}_{t\in\R}$ is non-compact in $G_d$, this flow is ergodic.
		
		Let $T>1$, $N \in \N$ and set $s = \frac{\log T}{d-1}$. For any distinct integers $n>m\geq 1$, $g_s^n P_{T,c} \cap g_s^m P_{T,c} = \emptyset$ and 
		\begin{equation*}
			\bigcup_{k=m}^n g_s^{-k}P_{T,c} = P_{T^{n+1},c}\setminus P_{T^m,c}
		\end{equation*}
		In particular, we have
		\begin{gather*}
			\bigcup_{k=0}^{N-1} g_s^{-k} P_{T,c} = P_{T^N,c}\\ \sum_{k=0}^{N-1} \widehat{\mathbbm{1}}_{P_{T,c}} (g_s^k\Lambda) = \widehat{\mathbbm{1}}_{P_{T^N,c}}(\Lambda) 
		\end{gather*}
		By Siegel's Mean Value Theorem (Theorem \ref{SMVT}), the function $\widehat{\mathbbm{1}}_{P_{T,c}}(g_s^k\Lambda) - \text{vol}_d(P_{T,c})$ is integrable for all $k\in\N$, and moreover,
		\begin{equation*}
			\int_{X_d}\widehat{\mathbbm{1}}_{P_{T,c}}(g_s^k\Lambda) - \text{vol}_d(P_{T,c})\,d\mu = 0
		\end{equation*}
		Rogers' theorem (theorem \ref{RMVT}) implies that $\left(\widehat{\mathbbm{1}}_{P_{T,c}}(g_s^k\Lambda) - \text{vol}_d(P_{T,c})\right)^2$ is integrable, so we can calculate 
		\begin{gather*}
			\int_{X_{d}} \Bigg( \sum_{k=0}^{N-1} \widehat{\mathbbm{1}}_{P_{T,c}} (g_s^k\Lambda) - N\text{vol}_d(P_{T,c}) \Bigg)^2\,d\mu \\ = \int_{X_{d}} \Bigg[\Bigg( \sum_{k=0}^{N-1} \widehat{\mathbbm{1}}_{P_{T,c}} (g_s^k\Lambda)\Bigg)^2 - 2N\text{vol}_d(P_{T,c})  \sum_{k=0}^{N-1} \widehat{\mathbbm{1}}_{P_{T,c}} (g_s^k\Lambda) + N^2\text{vol}_d(P_{T,c})^2 \Bigg]\,d\mu \\
			= \int_{X_{d}} \Bigg( \sum_{k=0}^{N-1} \widehat{\mathbbm{1}}_{P_{T,c}} (g_s^k\Lambda)\Bigg)^2 \, d\mu- N^2\text{vol}_d(P_{T,c})^2
		\end{gather*}
		Lemma \ref{lemma} implies that this can be bounded as
		\begin{align*}
			&\int_{X_{d}} \Bigg( \sum_{k=0}^{N-1} \widehat{\mathbbm{1}}_{P_{T,c}} (g_s^k\Lambda)\Bigg)^2 \, d\mu- N^2\text{vol}_d(P_{T,c})^2 \\\leq& \Bigg(\int_{\R^{d}} \sum_{k=0}^{N-1}\mathbbm{1}_{P_{T,c}}(\mathbf{x})\, d\mathbf{x} \Bigg)^2 + O_d\big(N\text{vol}_d(P_{T,c})\big) - N^2\text{vol}_d(P_{T,c})^2 \\ =& O_{T,c,d}(N)
		\end{align*}
		This shows condition (\ref{kaccond}) in theorem \ref{Gaposhkin} to be true, so for any $\epsilon >0$,
		\begin{equation} \label{thiscanvary}
			\frac{1}{N}\sum_{k=0}^{N-1}\widehat{\mathbbm{1}}_{P_{T,c}}(g_s^k\Lambda) = \frac{1}{N}\widehat{\mathbbm{1}}_{P_{T^N,c}}(\Lambda) = \text{vol}_d(P_{T,c}) + o_{T,c,\Lambda,N,d}\big(N^{-\frac{1}{2}}\left(\log N\right)^\frac{3}{2}(\log \log N)^{\frac{1}{2}+\epsilon}\big)
		\end{equation}
		Given any $\tau >2$ there exists unique $T\in[2,4)$ and $N \in \N$ with $\tau=T^N$. Write $\tau = T^N >1$. Then for any $\epsilon>0$,
		\begin{equation}
			\frac{1}{\log \tau} \widehat{\mathbbm{1}}_{P_{\tau,c}}(\Lambda) = cB_d + o_{T,c,\Lambda,d}\Big( (\log \tau)^{-\frac{1}{2}}(\log \log \tau)^\frac{3}{2}(\log\log\log\tau)^{\frac{1}{2}+\epsilon}\Big)
		\end{equation}
		Similarly, for any measurable $A \subset \mathbb{S}^{d-2}$, 
		\begin{equation*}
			\frac{1}{\log \tau} \widehat{\mathbbm{1}}_{P_{\tau,c,A}}(\Lambda) =cB_d\text{vol}_{\mathbb{S}^{d-2}}(A) + o_{A,T,c,\Lambda,d}\Big( (\log \tau)^{-\frac{1}{2}}(\log \log \tau)^\frac{3}{2}(\log\log\log\tau)^{\frac{1}{2}+\epsilon}\Big)
		\end{equation*}
		By varying $T$, we conclude
		\begin{equation}
			\label{ssss}
			\frac{\widehat{\mathbbm{1}}_{P_{\tau,c,A}}(\Lambda)}{\widehat{\mathbbm{1}}_{P_{\tau,c}}(\Lambda)} = \text{vol}_{\mathbb{S}^{d-2}}(A)  +o_{A,c,\Lambda,d}\big( (\log \tau)^{-\frac{1}{2}}(\log \log \tau)^\frac{3}{2}(\log\log\log\tau)^{\frac{1}{2}+\epsilon}\big)
		\end{equation}
	\end{proof}
	\begin{remark}
		In 1960, Schmidt studied a similar counting problem for lattice points of a (not necessarily unimodular) lattice $\Lambda$ contained in a family of nested finite measurable sets $\Phi \ni S$ which contain arbitrarily large volumes \cite{schmidtMETRICALTHEOREMGEOMETRY}. He found that in dimensions $3$ and greater, for any non-decreasing $\psi:[0,\infty)\rightarrow (0,\infty)$ such that $\int_0^\infty\psi(s)^{-1}ds$ exists, that
		\begin{equation*}
			\#(\Lambda \cap S) = \text{vol}_d(S) + O(\text{vol}_d(S)^{\frac{1}{2}} \log \text{vol}_d(S) \psi^\frac{1}{2}(\log \text{vol}_d(S)))
		\end{equation*}
		Noting that $\text{vol}_d(P_{\tau,c,A}) = c \text{vol}_d\big(B_d\big)\text{vol}_d(A) \log \tau$, we may rewrite (\ref{ssss}) as
		\begin{equation}
			\label{newphi}
			\#(\Lambda \cap P_{\tau,c,A}) = \text{vol}_d(P_{\tau,c,A}) + o_{\Lambda,c,d}\big( (\log \tau)^\frac{1}{2}(\log\log \tau)^\frac{3}{2} (\log \log \tau)^{\frac{1}{2}+\epsilon} \big)
		\end{equation}
		Then setting $\psi$ to be $f(s)=(\log s)^3 (\log\log s)^{1+2\epsilon}$ in Schmidt's theorem would give (\ref{newphi}), but notice that $\int_1^\infty (f(s))^{-1} ds$ does not converge. The result of theorem \ref{main} is therefore a slight improvement to Schmidt's result in the specific case of $\Phi=\{P_{T,c,A}\,|\,T>1\}$ for any $c>0$ and $A \subset \mathbb{S}^{d-2}$. Additionally, Schmidt found that, in two dimensions,
		\begin{equation*}
			\#(\Lambda \cap S) = \text{vol}_2(S) + O(\text{vol}_2(S)^{\frac{1}{2}} \log^2 \text{vol}_2(S) \psi^\frac{1}{2}(\log \text{vol}_2(S)))
		\end{equation*}
		We will see in section \ref{2d} that, in the case of $\Phi=\{P_{T,c,A}\,|\,T>1\}$, this can be improved to the same bound as in the higher-dimensional case.
	\end{remark}
	\begin{remark}
		As Gaposhkin states in his work, any number of iterated logarithms of the form $(N)^{-\frac{1}{2}} ( \log N)^\frac{3}{2} \cdots (\log \cdots \log N)^{\frac{1}{2}+\epsilon}$ may be used in \ref{thiscanvary} \cite[\S 3]{gaposhkinDependenceConvergenceRate1982}. In fact, any function $\Psi(t)$ that satisfies 
		\begin{equation*}
			\int_1^\infty \frac{1}{t\Psi(t)}\,dt < \infty
		\end{equation*}
		can be used to derive an error term. In \ref{effective}, the function
		\begin{equation*}
			\Psi(t) =\begin{cases}
				1 & 1 \leq t \leq e+\delta \\
				(\log  t) (\log \log t)^{1+\epsilon} &  t> e + \delta 
			\end{cases}
		\end{equation*}
		was used, for any $\delta >0$. Given such a function $\Psi$, the effective term will be of the form
		\begin{equation*}
			o\bigg( \sqrt{\frac{\Psi(T)}{T}} \log T \bigg)
		\end{equation*}
		(see Lemma 7 in \cite{gaposhkinDependenceConvergenceRate1982}).
	\end{remark}
	\subsection{Effective Theorem for Primitive Siegel Transforms}\label{primthm}
	Recall that the primitive lattice $\Lambda^\text{pr}$ is the set of all points $\mathbf{v}\in \Lambda$ such that $\alpha\mathbf{v}\notin \Lambda$ for all non-zero $|\alpha| \neq 1$. Define the primitive Siegel transform of a function $f:\R^d \rightarrow \R$ by 
	\begin{equation*}
		\widetilde{f}:X_d \rightarrow \R \, , \quad \widetilde{f}(\Lambda) = \sum_{\text{v}\in\Lambda^\text{pr}} f(\mathbf{v})
	\end{equation*}
	We are interested in obtaining similar theorems to those obtained above with this different transform. Siegel provided the following formula for the primitive Siegel transform.
	\begin{theorem}[{{\cite[Siegel]{siegelMeanValueTheorem1945}, see also \cite{macbeathSiegelMeanValue1958}}}]\label{PSMVT}
			Let $f \in L^1(\R^{d},d\mathbf{x})$. Then $\widetilde{f} \in L^1(X_d,\mu)$ and
			\begin{equation*}
				\frac{1}{\zeta(d)}\int_{\R^d} f(\mathbf{x})\,d\mathbf{x} = \int_{X_d} \widetilde{f}(\Lambda)\,d\mu(\Lambda)
			\end{equation*}
		\end{theorem}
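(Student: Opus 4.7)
The approach is to deduce Theorem \ref{PSMVT} from the ordinary Siegel Mean Value Theorem (Theorem \ref{SMVT}) via Möbius inversion. The key combinatorial input is the disjoint decomposition
\begin{equation*}
\Lambda\setminus\{\mathbf{0}\}=\bigsqcup_{k\geq 1}k\Lambda^{\textnormal{pr}},
\end{equation*}
where, for $\mathbf{v}\in\Lambda\setminus\{\mathbf{0}\}$, $\mathbf{w}\in\Lambda^{\textnormal{pr}}$ is the generator of the rank-one lattice $\Lambda\cap\R\mathbf{v}$ pointing in the same direction as $\mathbf{v}$, and $k\in\N$ is defined by $\mathbf{v}=k\mathbf{w}$. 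Uniqueness follows because if $k\mathbf{w}=k'\mathbf{w}'$ with $0<k<k'$ and $\mathbf{w}\in\Lambda^{\textnormal{pr}}$, then $\Lambda\cap\R\mathbf{w}=\Z\mathbf{w}$ forces $\mathbf{w}'=n\mathbf{w}$ for some $n\in\Z$, i.e.\ $n=k/k'\in(0,1)$, which is impossible.

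Writing $f_k(\mathbf{x}):=f(k\mathbf{x})$, this decomposition yields the pointwise identity
\begin{equation*}
\widehat{f_j}(\Lambda)=\sum_{k=1}^{\infty}\widetilde{f_{jk}}(\Lambda)\qquad(j\geq 1).
\end{equation*}
For $f\geq 0$ every term is non-negative, so I would apply Tonelli's theorem to integrate termwise, then combine with Theorem \ref{SMVT} and the scaling $\int_{\R^d}f(j\mathbf{x})\,d\mathbf{x}=j^{-d}\int_{\R^d}f\,d\mathbf{x}$ to obtain
\begin{equation*}
j^{-d}\int_{\R^d}f(\mathbf{x})\,d\mathbf{x}=\sum_{k=1}^{\infty}I(f_{jk}),\qquad I(g):=\int_{X_d}\widetilde{g}(\Lambda)\,d\mu(\Lambda)\in[0,\infty].
\end{equation*}
In particular $I(f)\leq\int_{\R^d}f\,d\mathbf{x}<\infty$, so $\widetilde{f}\in L^1(X_d,\mu)$. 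Since $d\geq 2$, the series $\sum_{k\geq 1}|\mu(k)|(jk)^{-d}$ converges absolutely, and the standard Möbius inversion applied to the relation $v_j=\sum_k u_{jk}$ with $u_j=I(f_j)$ and $v_j=j^{-d}\int_{\R^d}f\,d\mathbf{x}$ yields
\begin{equation*}
I(f_j)=\sum_{k=1}^{\infty}\mu(k)\,v_{jk}=\frac{j^{-d}}{\zeta(d)}\int_{\R^d}f(\mathbf{x})\,d\mathbf{x},
\end{equation*}
using the identity $\sum_{k\geq 1}\mu(k)/k^d=\zeta(d)^{-1}$. Setting $j=1$ gives the claim for $f\geq 0$.

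For general $f\in L^1(\R^d)$, the pointwise bound $|\widetilde{f}|\leq\widetilde{|f|}$ combined with the non-negative case applied to $|f|$ shows $\widetilde{f}\in L^1(X_d,\mu)$, and splitting $f=f_+-f_-$ then recovers the full identity by linearity of both sides. The main obstacle is the legitimacy of the Möbius inversion step, since the functional sum $\sum_k\mu(k)\widehat{f_k}(\Lambda)$ need not converge absolutely pointwise. I would circumvent this by applying Tonelli at the functional level (for non-negative $f$) and only invoking Möbius inversion on the absolutely convergent numerical series of integrals, thereby avoiding any divergent series manipulation.
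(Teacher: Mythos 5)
Your argument is correct, but note that the paper does not prove Theorem \ref{PSMVT} at all: it is quoted from Siegel (with Macbeath--Rogers as a secondary reference), so there is no internal proof to compare against. Your derivation is the classical deduction of the primitive mean value formula from the non-primitive one, run in the opposite direction to Siegel's original argument (which establishes the primitive case first by unfolding over the stabiliser of a primitive vector and then sums over multiples to get Theorem \ref{SMVT}). The decomposition $\Lambda\setminus\{\mathbf{0}\}=\bigsqcup_{k\geq 1}k\Lambda^{\textnormal{pr}}$ and the resulting identity $\widehat{f_j}=\sum_k\widetilde{f_{jk}}$ are right, as is the reduction to non-negative $f$ via $|\widetilde{f}|\leq\widetilde{|f|}$ and $f=f_+-f_-$. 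The one step you should make explicit is the justification of the interchange of summation hidden in the M\"obius inversion: to pass from $v_j=\sum_k u_{jk}$ to $u_j=\sum_k\mu(k)v_{jk}$ you substitute $v_{jk}=\sum_l u_{jkl}$ and reorder the double series, which requires $\sum_{k,l}|\mu(k)|\,u_{jkl}<\infty$. This follows from the pointwise bound $u_{jk}=I(f_{jk})\leq v_{jk}=(jk)^{-d}\int_{\R^d}f\,d\mathbf{x}$ (valid for $f\geq 0$ since $u_{jk}$ is one term of the non-negative series summing to $v_{jk}$), giving $\sum_{k,l}|\mu(k)|\,u_{jkl}\leq j^{-d}\zeta(d)^2\int_{\R^d}f\,d\mathbf{x}<\infty$ for $d\geq 2$. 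You have all the ingredients for this bound in hand; stating it closes the only gap. As a minor remark, your argument also exposes that the paper's literal definition of $\Lambda^{\textnormal{pr}}$ (``$\alpha\mathbf{v}\notin\Lambda$ for all non-zero $|\alpha|\neq 1$'') should be read as excluding proper divisors, i.e.\ $\mathbf{v}/k\notin\Lambda$ for integers $k\geq 2$, which is the notion your decomposition uses.
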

	In the same paper that \cite[Theorem 4]{rogersMeanValuesSpace1955} was proved, Rogers also provided a moment formula for the primitive Siegel transform.
	\begin{theorem}[{{\cite[Theorem 5]{rogersMeanValuesSpace1955}}}] \label{Rogerprim}
		Let $d\geq 3$ and suppose $f:X_{d}\rightarrow \R_{\geq 0}$ is measurable. Then
		\begin{align*}
			\int_{X_{d}} \widetilde{f}(\Lambda)\, d\mu(\Lambda) =& \left(\frac{1}{\zeta(d)} \int_{\R^{d}} f(\mathbf{x})\, d\mathbf{x}\right)^2 + \frac{1}{\zeta(d)} \int_{\R^{d}} \left(f(\mathbf{x})\right)^2\,d\mathbf{x}\\&+ \frac{1}{\zeta(d)}\int_{\R^{d}} f(\mathbf{x})f(-\mathbf{x})\,d\mathbf{x}
		\end{align*}
	\end{theorem}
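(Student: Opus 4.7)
The right-hand side makes clear this is a second-moment identity, so the displayed left-hand side $\int_{X_d}\widetilde{f}(\Lambda)\,d\mu$ should be read as $\int_{X_d}\widetilde{f}(\Lambda)^2\,d\mu$. My plan is to expand the square as a double sum over primitive vectors and decompose by $\SL_d(\Z)$-orbit type on ordered pairs:
\begin{equation*}
\widetilde{f}(\Lambda)^2 = \sum_{\mathbf{v}\in\Lambda^{\text{pr}}}f(\mathbf{v})^2 + \sum_{\mathbf{v}\in\Lambda^{\text{pr}}}f(\mathbf{v})f(-\mathbf{v}) + \sum_{\substack{\mathbf{v},\mathbf{w}\in\Lambda^{\text{pr}}\\ \text{lin.\ indep.}}}f(\mathbf{v})f(\mathbf{w}),
\end{equation*}
corresponding to the diagonal, antipodal, and linearly-independent orbits. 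Since negation preserves primitivity, the first and second sums are $\widetilde{f^2}(\Lambda)$ and $\widetilde{g}(\Lambda)$ for $g(\mathbf{x})=f(\mathbf{x})f(-\mathbf{x})$, so the primitive Siegel formula (Theorem \ref{PSMVT}) immediately produces the last two terms on the right-hand side.

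The linearly-independent piece is the main step. I would apply Möbius inversion $\mathbbm{1}[\mathbf{v}\in\Lambda^{\text{pr}}] = \sum_{k\mid \gcd_\Lambda(\mathbf{v})}\mu(k)$ twice to rewrite it as
\begin{equation*}
\sum_{k,l\ge 1}\mu(k)\mu(l)\int_{X_d}\sum_{\substack{\mathbf{v},\mathbf{w}\in\Lambda\setminus\{\mathbf{0}\}\\ \text{lin.\ indep.}}}f(k\mathbf{v})f(l\mathbf{w})\,d\mu,
\end{equation*}
and evaluate the inner integral by Rogers' mean value theorem (Theorem \ref{RMVT}) with $k=2$ and $\rho(\mathbf{x},\mathbf{y})=f(k\mathbf{x})f(l\mathbf{y})$. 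The only admissible partition $m=1$ generates, as in the computation of Lemma \ref{lemma}, terms supported on pairs $\mathbf{w}=(r/q)\mathbf{v}$ — precisely the rank-$1$ dependent pairs we have just excluded. What remains is the product term $\int f(k\mathbf{x})\,d\mathbf{x}\cdot\int f(l\mathbf{y})\,d\mathbf{y}=(kl)^{-d}\left(\int f\right)^2$, and summing via $\sum_k \mu(k)/k^d=\zeta(d)^{-1}$ yields the first term $\zeta(d)^{-2}\left(\int f\right)^2$.

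The principal obstacle is rigorously justifying the double Möbius--Fubini--Rogers interchange. For nonnegative $f$ this follows from monotone convergence combined with the absolute summability $\sum_{k\ge 1}|\mu(k)|/k^d<\infty$ for $d\ge 3$, and for general $f\in L^2$ one reduces to $|f|$. The conceptual subtlety is verifying that the $(1;2)$-orbit correction in Rogers' formula corresponds exactly to the dependent-pair restriction — this is a short $\Z^d$-orbit computation of the type carried out in Lemma \ref{lemma}. A cleaner alternative, bypassing Möbius entirely, would be to unfold $\int_{X_d}\sum_{\text{indep.\ primitive}}$ directly on $G_d=\SL_d(\R)$ using the transitivity of $\SL_d(\Z)$ on primitive rank-$2$ sublattices of $\Z^d$; either route, the three orbit contributions assemble into the claimed identity.
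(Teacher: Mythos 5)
You should first note that the paper does not prove this statement at all: Theorem \ref{Rogerprim} is quoted as an external result, namely Rogers' Theorem 5 in \cite{rogersMeanValuesSpace1955}, so there is no in-paper proof to compare against. You are right that the statement as printed contains typos — the left-hand side must be $\int_{X_d}\widetilde{f}(\Lambda)^2\,d\mu$ and the hypothesis should read $f:\R^d\to\R_{\geq 0}$ — and your reading is the correct one.

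On its own merits your reconstruction is sound and follows the standard orbit-decomposition route (essentially Rogers' own). The splitting of $\widetilde{f}^2$ into diagonal, antipodal and linearly independent pairs is exact, because two proportional primitive vectors in a lattice must satisfy $\mathbf{w}=\pm\mathbf{v}$; Theorem \ref{PSMVT} then disposes of the first two pieces. The double M\"obius inversion with $\sum_k\mu(k)k^{-d}=\zeta(d)^{-1}$ and Tonelli (everything is nonnegative) correctly produces $\zeta(d)^{-2}(\int f)^2$ from the independent pairs, \emph{provided} one knows that $\int_{X_d}\sum_{\text{lin.\ indep.}}\rho_1(\mathbf{v})\rho_2(\mathbf{w})\,d\mu=\int\rho_1\int\rho_2$. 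This is the one step you should not wave at: Theorem \ref{RMVT} as stated sums over \emph{all} pairs in $\Lambda^2$, so you must show that the $\rho(\mathbf{0},\mathbf{0})$ term together with the entire $(\nu;\mu)$-correction sum equals $\int_{X_d}\sum_{\text{lin.\ dep.\ or zero}}\rho\,d\mu$, not merely that the corrections are ``supported on'' dependent pairs. This can be done by parametrising the nonzero dependent pairs as $(a\mathbf{u},b\mathbf{u})$ with $\mathbf{u}$ primitive and $a,b\in\Z\setminus\{0\}$, applying Theorem \ref{PSMVT} to $h(\mathbf{x})=\sum_{a,b\neq 0}\rho(a\mathbf{x},b\mathbf{x})$, and matching the result against the $(1;2)$-orbit sum computed in Lemma \ref{lemma}; alternatively your suggested unfolding over $\SL_d(\Z)$-orbits of rank-$2$ integer pairs avoids the bookkeeping. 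With that step filled in, the three contributions assemble into the claimed identity, so the proposal is a correct proof sketch of the cited theorem rather than a divergence from anything in the paper.
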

	\begin{proof}[Proof of theorem \ref{prim} for $d\geq 3$]
		We have, for $s=\frac{\log T}{d-1}$, and using theorem \ref{PSMVT},
		\begin{align*}
			&\int_{X_{d}} \left( \sum_{k=0}^{N-1}\widetilde{\mathbbm{1}}_{P_{T,c}}(g_s^k \Lambda) - \frac{N\textnormal{vol}(P_{T,c})}{\zeta(d)} \right)^2\, d\mu \\
			=&\int_{X_{d}} \left( \sum_{k=0}^{N-1}\widetilde{\mathbbm{1}}_{P_{T,c}}(g_s^k \Lambda)\right)^2 \,d\mu - \frac{2N\textnormal{vol}(P_{T,c})}{\zeta(d)} \int_{X_{d}}\sum_{k=0}^{N-1}\widetilde{\mathbbm{1}}_{P_{T,c}}(g_s^k \Lambda)\,d\mu + \left( \frac{N\textnormal{vol}(P_{T,c})}{\zeta(d)} \right)^2 \\
			=& \int_{X_{d}} \left( \sum_{k=0}^{N-1}\widetilde{\mathbbm{1}}_{P_{T,c}}(g_s^k \Lambda)\right)^2 \,d\mu - \left( \frac{N\textnormal{vol}(P_{T,c})}{\zeta(d)} \right)^2 \\
			=& \int_{X_{d}}\left( \widetilde{\mathbbm{1}}_{P_{T^N,c}}(\Lambda)\right)^2\, d\mu - \left( \frac{\textnormal{vol}(P_{T^N,c})}{\zeta(d)} \right)^2
		\end{align*}
		Using Theorem \ref{Rogerprim}, 
		\begin{align*}
			& \int_{X_{d}}\left( \widetilde{\mathbbm{1}}_{P_{T^N,c}}(\Lambda)\right)^2 \,d\mu - \left( \frac{\textnormal{vol}(P_{T^N,c})}{\zeta(d)} \right)^2 \\
			=& \frac{1}{\zeta(d)^2} \left(\int_{\R^{d}} {\mathbbm{1}}_{P_{T^N,c}}(\mathbf{x})\,d\mathbf{x} \right)^2 + \frac{1}{\zeta(d)} \int_{\R^{d}} ( {\mathbbm{1}}_{P_{T^N,c}}(\mathbf{x}))^2\,d\mathbf{x} \\&+ \frac{1}{\zeta(d)} \int_{\R^{d}}  {\mathbbm{1}}_{P_{T^N,c}}(\mathbf{x}) {\mathbbm{1}}_{P_{T^N,c}}(-\mathbf{x}) \,d\mathbf{x} - \left( \frac{\textnormal{vol}(P_{T^N,c})}{\zeta(d)} \right)^2\\
			=& \frac{\text{vol}(P_{T^N,c})^2}{\zeta(d)^2} + \frac{\text{vol}(P_{T^N,c})}{\zeta(d)}- \left( \frac{\textnormal{vol}(P_{T^N,c})}{\zeta(d)} \right)^2 \\
			=& \frac{\text{vol}(P_{T^N,c})}{\zeta(d)}\\
			=& O_{T,c,d}\left( N \right)
		\end{align*}
		A similar argument to that of theorem \ref{main} yields, for any $A \subset \mathbb{S}^{d-2}$ with measurable boundary,
		\begin{equation}\label{primcount}
			\widetilde{ \mathbbm{1}}_{P_{T,c,A}}(\Lambda) = cB_{d-1}\text{vol}(A)\log T + +o_{A,c,\Lambda,d}\big( (\log T)^{-\frac{1}{2}}(\log \log T)^\frac{3}{2}(\log\log\log T)^{\frac{1}{2}+\epsilon}\big)
		\end{equation}
		\begin{equation}\label{primsprial}
			\frac{\widetilde{\mathbbm{1}}_{P_{T,c,A}}(\Lambda)}{\widetilde{\mathbbm{1}}_{P_{T,c,A}}(\Lambda)} = \text{vol}(A)  +o_{A,c,\Lambda,d}\big( (\log T)^{-\frac{1}{2}}(\log \log T)^\frac{3}{2}(\log\log\log T)^{\frac{1}{2}+\epsilon}\big)
		\end{equation}
	\end{proof}
	\section{The Two-Dimensional Case}\label{2d}
	\subsection{Outline of Proof}
	We wish to find a bound, asymptotic in $T$, for the expression
	\begin{align*}
		&\int_{X_2} \left( \widetilde{\mathbbm{1}}_{P_{T,c}}(\Lambda) - \frac{\textnormal{area}(P_{T,c})}{\zeta(2)}\right)^2\, d\mu(\Lambda) \\
		=& \norm{\widetilde{\mathbbm{1}}_{P_{T,c}}}_2^2 - 2\frac{\textnormal{area}(P_{T,c})}{\zeta(2)} \int_{X_{2}}\widetilde{\mathbbm{1}}_{P_{T,c}}(\Lambda) \, d\mu(\Lambda) + \frac{\textnormal{area}(P_{T,c})^2}{\zeta(2)^2} \\
		=& \norm{\widetilde{\mathbbm{1}}_{P_{T,c}}}_2^2 - \frac{4c^2}{\zeta(2)^2}\log(T)
	\end{align*}
	Rogers' theorems (theorems \ref{RMVT} and \ref{Rogerprim}) cannot be used to calculate this expression in either the full or primitive case of the Siegel transform, since these theorems do not hold in the case $d=2$. In its stead, we will use the following theorem. 
	\begin{theorem}[{{\cite[Theorem 2.1]{kleinbockDynamicalBorelCantelliLemma2020}}}]\label{KleinbockYu}
		Let $\mathcal{S}$ be a measurable and bounded subset of $\R^2$, let $f = \mathbbm{1}_\mathcal{S}$, and let $-{\mathcal{S}} = \{\mathbf{x} \in \R^2\, | \, -\mathbf{x}\in\mathcal{S}\}$. Then
		\begin{equation*}
			\norm{\widetilde{f}}_2^2 = \frac{1}{\zeta(2)} \left(\textnormal{area}(\mathcal{S}) + \textnormal{area}(\mathcal{S}\cap -{\mathcal{S}}) + \sum_{n\neq 0}\frac{\varphi(|n|)}{|n|} \iint\limits_{\mathcal{S}}\left| \mathcal{I}_{(x,y)}^n(\mathcal{S}) \right| \,dx\,dy\right)
		\end{equation*}
		where
		\begin{equation*}
			\left|\mathcal{I}_{(x,y)}^n(\mathcal{S}) \right| = \left\{ t \in \R \, \Bigg| \, \frac{n}{x^2+y^2}(-y,x) + t(x,y) \in \mathcal{S} \right\}.
		\end{equation*}
	\end{theorem}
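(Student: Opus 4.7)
The plan is to compute $\norm{\widetilde{f}}_2^2 = \int_{X_2}\widetilde{f}(\Lambda)^2\,d\mu(\Lambda)$ by expanding the square as a double sum over $\Lambda^{\mathrm{pr}}\times\Lambda^{\mathrm{pr}}$ and splitting it into three ranges: (i) the diagonal $\mathbf{v}=\mathbf{w}$; (ii) the antidiagonal $\mathbf{v}=-\mathbf{w}$; and (iii) pairs with $\det(\mathbf{v},\mathbf{w})\neq 0$. Ranges (i) and (ii) are handled directly by the primitive Siegel formula (Theorem \ref{PSMVT}), since the diagonal sum is $\widetilde{f^{2}}(\Lambda)$, contributing $\frac{1}{\zeta(2)}\mathrm{area}(\mathcal{S})$, and the antidiagonal sum equals $\widetilde{h}(\Lambda)$ for $h(\mathbf{v})=f(\mathbf{v})f(-\mathbf{v})=\mathbbm{1}_{\mathcal{S}\cap-\mathcal{S}}(\mathbf{v})$, contributing $\frac{1}{\zeta(2)}\mathrm{area}(\mathcal{S}\cap-\mathcal{S})$.

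For the linearly independent pairs I would parameterise $\Lambda=g\Z^{2}$ with $g\in G_{2}=\SL_{2}(\R)$ and unfold the integral over $X_{2}$ as a sum over $\SL_{2}(\Z)$-orbits of ordered pairs $(\mathbf{a},\mathbf{b})\in(\Z^{2})^{\mathrm{pr}}\times(\Z^{2})^{\mathrm{pr}}$ with $\det(\mathbf{a},\mathbf{b})\neq 0$. A Smith-normal-form / column-reduction argument shows that every such orbit has a unique representative $\mathbf{a}=e_{1}$, $\mathbf{b}=(k,n)^{\mathrm{T}}$ with $n\in\Z\setminus\{0\}$ and $0\leq k<|n|$, $\gcd(k,|n|)=1$; the stabiliser of $(e_{1},(k,n)^{\mathrm{T}})$ in $\SL_{2}(\Z)$ is trivial, so for each $n\neq 0$ there are exactly $\varphi(|n|)$ orbits.

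For each orbit representative I would change variables $g\mapsto(u,w):=(ge_{1},g(k,n)^{\mathrm{T}})$. Writing $u=(x,y)$ and $v=ge_{2}$, we have $w=ku+nv$, so $\det(u,w)=n$ and the map identifies $\SL_{2}(\R)$ with the level set $\{\det=n\}\subset(\R^{2})^{2}$. Parameterising this level set by $w=t(x,y)+\frac{n}{x^{2}+y^{2}}(-y,x)$ with fibre coordinate $t\in\R$, Haar measure on $\SL_{2}(\R)$ pushes forward to $dx\,dy\,dt$ up to a Jacobian factor $1/|n|$ arising from the linear relation $w=ku+nv$. The inner $t$-integral of $\mathbbm{1}_{\mathcal{S}}(w)$ is by construction $\bigl|\mathcal{I}_{(x,y)}^{n}(\mathcal{S})\bigr|$, and since the integrand is independent of $k$, summing over the $\varphi(|n|)$ admissible residues produces the coefficient $\varphi(|n|)/|n|$ in front of $\iint_{\mathcal{S}}|\mathcal{I}_{(x,y)}^{n}(\mathcal{S})|\,dx\,dy$.

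The main obstacle will be careful bookkeeping of the normalisations: one must verify that the overall prefactor $1/\zeta(2)$ in front of the whole expression is consistent, which amounts to pinning down the ratio between Haar measure $dg$ on $G_{2}$ and the projected probability measure $\mu$ on $X_{2}$ compatibly with Theorem \ref{PSMVT}. A secondary, less delicate point is absolute convergence of $\sum_{n\neq 0}\frac{\varphi(|n|)}{|n|}\iint_{\mathcal{S}}|\mathcal{I}_{(x,y)}^{n}(\mathcal{S})|\,dx\,dy$, which follows because $\mathcal{S}$ is bounded: $\mathcal{I}_{(x,y)}^{n}(\mathcal{S})=\emptyset$ once $|n|/(x^{2}+y^{2})$ exceeds twice the diameter of $\mathcal{S}$, so for each fixed $(x,y)$ only finitely many $n$ contribute.
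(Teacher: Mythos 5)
The paper does not prove this statement: it is imported verbatim from Kleinbock--Yu and used as a black box in Section \ref{2d}, so there is no in-paper argument to compare against. Your sketch is, as far as I can tell, essentially the standard (and the cited paper's) unfolding proof, and it is correct in outline. The trichotomy diagonal/antidiagonal/linearly-independent is exhaustive because two primitive vectors of a planar lattice are dependent only if they are equal up to sign, and the two degenerate ranges do reduce to Theorem \ref{PSMVT} exactly as you say. For the nondegenerate range, your orbit analysis is right: the stabiliser of $e_1$ in $\SL_2(\Z)$ is the integer unipotent upper-triangular subgroup, which shifts $k$ by multiples of $n=\det(\mathbf{a},\mathbf{b})$, giving $\varphi(|n|)$ orbits with trivial pairwise stabiliser. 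The one computation you should actually carry out rather than assert is the Jacobian: writing $v=s\,u^{\perp}/\norm{u}^2+tu$ with $s=\det(u,v)$ shows $du\,dv=dx\,dy\,ds\,dt$, so the fibre measure on $\{\det=1\}$ is $dx\,dy\,dt$ and its pushforward under $(u,v)\mapsto(u,ku+nv)$ onto $\{\det=n\}$ is $\tfrac{1}{|n|}dx\,dy\,dt'$, which is where the $1/|n|$ comes from; the inner $dt'$-integral is then $\left|\mathcal{I}_{(x,y)}^{n}(\mathcal{S})\right|$ by construction. Your plan to pin the overall $1/\zeta(2)$ by consistency with Theorem \ref{PSMVT} (equivalently, the $\Gamma_2$-covolume in this normalisation is $\zeta(2)$) closes the argument, and convergence is immediate since for bounded $\mathcal{S}$ only finitely many $n$ give a nonempty $\mathcal{I}_{(x,y)}^{n}(\mathcal{S})$ (the line lies at distance $|n|/\norm{(x,y)}$ from the origin, so $|n|$ is bounded by $R^2$ if $\mathcal{S}\subset B(0,R)$ --- your stated cutoff is slightly off but the conclusion stands).
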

	Using this theorem, we will show that $\widetilde{\mathbbm{1}}_{P_{T,c}}\in L^2(X_2,\mu)$, in which case we may apply theorem \ref{Gaposhkin}. To calculate $\norm{\widetilde{\mathbbm{1}}_{P_{T,c}}}_2^2$, we first find an expression for $|\mathcal{I}_{(x,y)}^n(P_{T,c})|$ in terms of the y-coordinates of the intersections of the line $\left\{\left(\frac{n}{x^2+y^2}(-y,x)+t(x,y)\right) \, \Big| \, t \in \R\right\}$ with $\partial P_{T,c}$. The integral $\iint\limits_{\,\,P_{T,c}} \Big|\mathcal{I}_{(x,y)}^n(P_{T,c})\Big|\,dx\,dy$ can be decomposed into a sum of integrals whose integrands are related to these intersection's y-coordinates. Finally, the sum $\sum\limits_{n\neq0} \iint\limits_{\,\,P_{T,c}} \Big|\mathcal{I}_{(x,y)}^n(P_{T,c})\Big|\,dx\,dy$ is calculated by finding a suitable power series for $ \Big|\mathcal{I}_{(x,y)}^n(P_{T,c})\Big|$ and using \cite[Theorem 421]{hardyIntroductionTheoryNumbers2009} to find the asymptotic growth of the sum.
	\subsection{Setting up the Integrals}
	Let $P_{T,c} = \left\{ (x,y)\in\R^2 \, \big| \, |x|y<c , \, 1<y\leq T\right\}$. First, we wish to calculate, for any $n\in \Z\setminus \{0\}$, $T>1$, $c>0$ and $(x,y) \in P_{T,c}$, the value of
	\begin{equation*}
		\Big|\mathcal{I}_{(x,y)}^n(P_{T,c})\Big| = \bigg|\bigg\{ t \in \R \, \bigg| \, \left( \frac{-ny}{x^2+y^2}+tx, \frac{nx}{x^2+y^2}+ty\right) \in P_{T,c} \bigg\} \bigg|
	\end{equation*}
	where $|\cdot|$ is the one-dimensional Lebesgue measure. For any $n\neq0$, $(x,y) \in P_{T,c}$, let 
	\begin{equation*}
		L(n,x,y,t)=\left(\frac{-ny}{x^2+y^2}+tx, \frac{nx}{x^2+y^2}+ty\right), \quad L(n,x,y) = \left\{ L(n,x,y,t)\, \bigg| \, t \in \R\right\}
	\end{equation*}
	Then $L(n,x,y)$ is an infinite line parameterised by $t\in\R$ intersecting the point $\frac{n}{x^2+y^2}\left(-y,x\right)$ with gradient $\frac{y}{x}$ (or a vertical line when $x=0$). Suppose $n>0$. Then
	\begin{equation*}
		\frac{(-n)\pm \sqrt{(-n)^2\pm 4xyc}}{2x} = \frac{n \mp \sqrt{n^2\pm 4xyc}}{-2x} = \frac{n \mp \sqrt{n^2\mp 4(-x)yc}}{2(-x)}
	\end{equation*}
	Furthermore,
	\begin{gather*}
		\left(\frac{-(-n)y}{x^2+y^2} + t(-x), \frac{(-n)(-x)}{x^2+y^2} + ty\right) = \left(-\Big(\frac{-ny}{x^2+y^2} + tx\Big), \frac{nx}{x^2+y^2} + ty\right)
	\end{gather*}
	so, by the symmetry of $P_{T,c}$ around the $y$-axis, $\Big|\mathcal{I}_{(x,y)}^n(P_{T,c})\Big| =\Big|\mathcal{I}_{(x,y)}^{-n}(P_{T,c})\Big|  $. Therefore, we need only examine the case that $n>0$. 
	
	Fix $n\in\N^+$ and $c>0$. Let $Q_{c} = \{(x,y) \in \R^2 \, | \, |xy| \leq c\}$. The line $L$ must intersect $\partial Q_c$ at either 2 or 4 points (the case that $L$ intersects at 2 points and is tangent at a third point is covered by the latter case). The $y$-coordinate of these intersections, for $x\neq 0 $, is given by
	\begin{equation*}
		\frac{n+ (-1)^a \sqrt{n^2 + (-1)^b 4xyc}}{2x}
	\end{equation*}
	where $a, b \in \{0,1\}$. Note that when $x= 0$, there are two intersections with $y$-coordinate $\pm\frac{cy}{n}$. For ease of notation, define
	\begin{gather*}
		y_1 = \frac{n-\sqrt{n^2+4xyc}}{2x} \\
		y_2 = \frac{n-\sqrt{n^2-4xyc}}{2x} \\
		y_3 = \frac{n+\sqrt{n^2-4xyc}}{2x} \\
		y_4 = \frac{n+\sqrt{n^2+4xyc}}{2x} \\
	\end{gather*}
	For $x>0$, and when they exist, these can be ordered as
	\begin{equation*}
		y_1 < 0 < y_2<y_3<y_4
	\end{equation*}
	Let $t_i$ be the value of $t$ such that the y-coordinate of $L(n,x,y,t_i)$ is $y_i$. When $n<2c$, for $t \in [t_1,t_4]$, $L(n,x,y,t)\in Q_c$. When $n\geq2c$, for $t \in [t_1,t_2]\cup [t_3,t_4]$, $L(n,x,y,t) \in Q_c$.
	When $x<0$, 
	\begin{equation*}
		y_3 < y_4 < y_1 < 0 < y_2.
	\end{equation*}
	Similarly, $L(n,x,y,t)\in Q_c$ when $t\in [t_3,t_2]$ and $n<2c$ or when $t\in[t_3,t_4]\cup[t_1,t_2]$ and $n\geq 2c$.
	
	We now wish to find the intersections of $L(n,x,y)$ with $\partial P_{T,c}$. Notice that if $(x,y) \in P_{T,c}$ then $y>1$, so any $y_i<0$ will not correspond to an intersection of $L(n,x,y)$ with $\partial P_{T,c}$. In particular, $y_1<0$ for all $x$, so we need not consider it in these calculations. 
	In view of the above, the line $L(n,x,y)$ can intersect $\partial P_{T,c}$ in the following ways:
	\begin{enumerate}
		\item through the lines $\{y=1\}$ and $\{y=T\}$, 
		\item through the line $\{y=1\}$ and intersecting once more at a point with $y$-coordinate $y_2 \in [1,T]$,
		\item through the line $\{y=1\}$ and intersecting once more at a point with $y$-coordinate $y_4\in [1,T]$,
		\item through the line $\{y=T\}$ and intersecting once more at a point with $y$-coordinate $  y_3  \in [1,T]$,
		\item intersecting twice at points with $y$-coordinates $ y_3 , y_4 \in [1,T]$,
		\item through the line $\{y=1\}$ and intersecting three more times at points with $y$-coordinates $y_2 , y_3 , y_4 \in [1,T]$,
		\item through the lines $\{y=1\}$, $\{y=T\}$ and intersecting twice more at points with $y$-coordinates $ y_2 , y_3\in [1,T]$,
		\item no intersection with $\partial P_{T,c}$.
	\end{enumerate}
	Suppose we have $n,x,y$ such that $L(n,x,y)$ intersects $\partial P_{T,c}$ at two points $(\alpha_1,\beta_1),(\alpha_2,\beta_2)$ with $\beta_2>\beta_1$. Then
	\begin{equation}\label{length}
		\Big|\mathcal{I}_{(x,y)}^n(P_{T,c})\Big|=\left(\frac{\beta_2}{y} - \frac{nx}{yx^2+y^3} \right)- \left(\frac{\beta_1}{y} - \frac{nx}{yx^2+y^3} \right)  = \frac{\beta_2-\beta_1}{y}
	\end{equation}
	Similarly, if $L(n,x,y)$ intersects $\partial P_{T,c}$ at $(\alpha_i,\beta_i)$ for $i=1,2,3,4$ with $\beta_1<\beta_2<\beta_3<\beta_4$, then
	\begin{equation*}
		\big|\mathcal{I}_{(x,y)}^n(P_{T,c})\big|=\frac{\beta_4-\beta_3+\beta_2-\beta_1}{y}
	\end{equation*}
	For each of the 8 statements about intersection above, denote by $B_i$ the set of all $(x,y) \in P_{T,c}$ such that $L(n,x,y)$ intersects $\partial P_{T,c}$ as in statement $i$ above. The collection $\{B_i\}$ is a partition of $P_{T,c}$. The integral we wish to calculate can therefore be written as
	\begin{equation}\label{fulleq}
		\iint\limits_{P_{T,c}}\big| \mathcal{I}_{(x,y)}^n(P_{T,c})\big| = \sum_{i=1}^7 \iint\limits_{B_i} \frac{1}{y}\left( \delta_{T,i} T + \delta_{y_4,i}y_4 - \delta_{y_3,i}y_3 + \delta_{y_2,i}y_2 - \delta_{1,i}\right)  \,dx\,dy
	\end{equation}
	where $\delta_{\alpha,i}=1$ when statement $i$ includes an intersection of type $\alpha\in\{1,y_2,y_3,y_4,T\}$ and is $0$ otherwise. For example, $\delta_{T,1}=\delta_{T,4}=\delta_{T,7}=1$, and $\delta_{\alpha, 8}=0$ for all intersection types.
	To ease the calculations to come, we may use the linearity of the integral to split (\ref{fulleq}) into a sum of five integrals, each with integrand of the form $\frac{\alpha}{y}$. The domains of these integrals are given by $A_\alpha=\bigcup_{i:\delta_{\alpha,i}=1}B_i$. Using the example of $T$ as before, $A_T = B_1\cup B_4 \cup B_7$. Therefore, 
	\begin{align}\label{decomp}
		\iint\limits_{P_{T,c}}\left| \mathcal{I}_{(x,y)}^n(P_{T,c})\right| \,dx\,dy =& \iint\limits_{A_T} \frac{T}{y} \,dx\,dy
		+ \iint\limits_{A_{y_4}}\frac{y_4}{y} \,dx\,dy
		- \iint\limits_{A_{y_3}}\frac{y_3}{y} \,dx\,dy
		\\&+ \iint\limits_{A_{y_2}}\frac{y_2}{y} \,dx\,dy- \iint\limits_{A_1}\frac{1}{y} \,dx\,dy.\nonumber
	\end{align}
	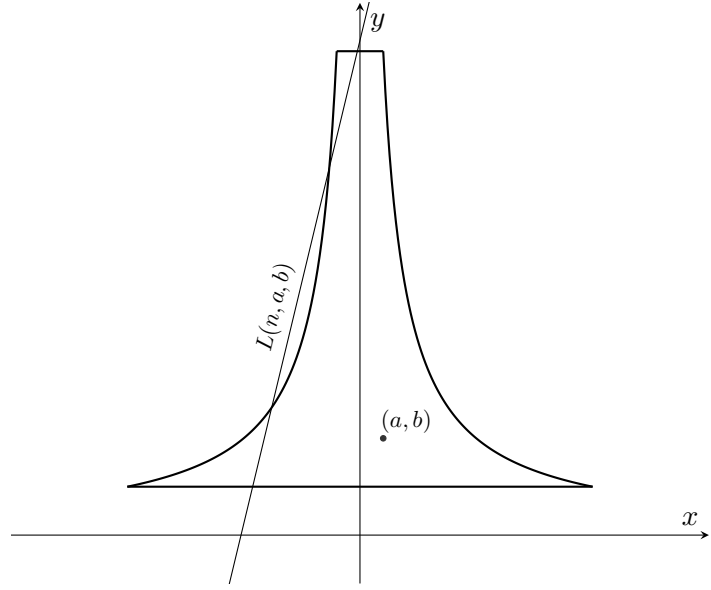
\begin{figure}[h!]
		\centering
		\begin{tikzpicture}
			\begin{axis}[axis y line=middle,axis x line=middle,
				xtick=\empty,ytick=\empty,
				ymin=-1, ymax=11,
				ylabel=$y$, 
				xmin=-1.5,xmax=1.5,xlabel=$x$,
				samples=100,
				width=0.69\textwidth
				]
				\addplot [black,thick,domain=-1:-0.1] {-1/x};
				\addplot [name path = posPT, black,thick,domain=0.1:1] {1/x};
				\addplot [name path = T, black,thick,domain=-0.1:0.1] {10} ;
				\addplot [name path=1,black,thick,domain=-1:1] {1};
				\node[scale=0.8] at (0.2,2.35) {$(a,b)$};
				\node[draw=black!80,fill=black!80,circle,scale=0.2] at (0.1,2) {};
				\addplot [black,thin,domain=-0.6:0.04] {20*x+4100/401} node[black,above,sloped,pos=0.5,scale=0.8] {$L(n,a,b)$};
			\end{axis}
		\end{tikzpicture} 
		\caption{An example of a point $(a,b)\in P_{T,c}$ and a line $L(n,a,b)$. For this line, we have $y_1<1<y_2<y_3<T<y_4$, so $(a,b)\in A_1 \cap A_{y_2} \cap A_{y_3} \cap A_T$.}
	\end{figure}
	Notice also that $A_\alpha$ is the subset of $P_{T,c}$ such that $L(n,x,y)$ has an intersection with $\partial P_{T,c}$ of type $\alpha$. Explicitly, these sets are:
	\begin{enumerate}
		\item[$A_1$:] all $(x,y)$ such that $L(n,x,y)$ intersects the line segment $\left\{\left(x,1\right)\,| \, \left|x\right|<c\right\}\subset \partial P_{T,c}$,
		\item[$A_{y_2}$:] all $(x,y)$ such that $y_2$ exists and $1<y_2<T$,
		\item[$A_{y_3}$:] all $(x,y)$ such that $y_3$ exists and $1<y_3<T$,
		\item[$A_{y_4}$:] all $(x,y)$ such that $y_4$ exists and $1<y_4<T$,
		\item[$A_T$:] all $(x,y)$ such that $L(n,x,y)$ intersects the line segment $\left\{\left(x,T\right)\,|\, \left|x\right|<\frac{c}{T}\right\}\subset \partial P_{T,c}$.
	\end{enumerate}
	Some cases will contain sub-cases; whether or not $n<2c$ affects the boundary conditions for all sets except $A_{y_4}$. To help calculate these areas, we calculate that the derivatives of the $y_i$ with respect to $x$ are
	\begin{gather*}
		\frac{\partial}{\partial x} y_1 =  \frac{\frac{n^2+2xyc}{\sqrt{n^2+4xyc}}-n}{2x^2} \geq 0\\
		\frac{\partial}{\partial x} y_2 =  \frac{\frac{n^2-2xyc}{\sqrt{n^2-4xyc}}-n}{2x^2} \geq 0\\
		\frac{\partial}{\partial x} y_3 = -\frac{\frac{n^2-2xyc}{\sqrt{n^2-4xyc}}+n}{2x^2} \leq 0\\
		\frac{\partial}{\partial x} y_4 = -\frac{\frac{n^2+2xyc}{\sqrt{n^2+4xyc}}+n}{2x^2} \leq 0\\
	\end{gather*}
	\begin{enumerate}
		\item[$A_1$:] Suppose $x>0$. If $x>\frac{n^2}{4yc}$, then $y_2,y_3$ do not exist, therefore the line $L$ intersects $Q_c$ at two points, each point with $y$-coordinate $y_1<0<y_4$. In this case, $L$ intersects the line $\left\{\left(x,1\right)\,| \, \left|x\right|<c\right\}$ when $y_4\geq 1$, so $x<n+cy$. Note that the line $x=n+cy$ lies outside of $P_{T,c}$.
		When $0<x \leq \frac{n^2}{4yc}$, both $y_2$ and $y_3$ exist. In this case, the line $L$ leaves $P_{T,c}$ at $y_2$ and re-enters at $y_3$. So $L$ intersects $\left\{\left(x,1\right)\,| \, \left|x\right|<c\right\}$ when $y_2>1$ or when $y_3<1$, hence $x>n-cy$.
		Finally, when $x<0$, only $y_2>0$, so we require that $y_2>1$, hence $x>n-cy$.
		If $n \leq 2c$, the integral is
		\begin{equation*}
			\iint\limits_{A_1}\frac{1}{y} \,dx\,dy= \int_1^T \int_{\max\{-\frac{c}{y},n-cy\}}^{\frac{c}{y}} \frac{1}{y} \,dx\,dy
		\end{equation*}
		otherwise, 
		\begin{equation*}
			\iint\limits_{A_1}\frac{1}{y} \,dx\,dy= \int_{\frac{n+\sqrt{n^2-4c^2}}{2c}}^T \int_{\max \left\{-\frac{c}{y},n-cy\right\}}^{\frac{c}{y}} \frac{1}{y} \,dx\,dy
		\end{equation*}
		This integral is only valid for $n\leq\frac{c(T^2+1)}{T}$.
		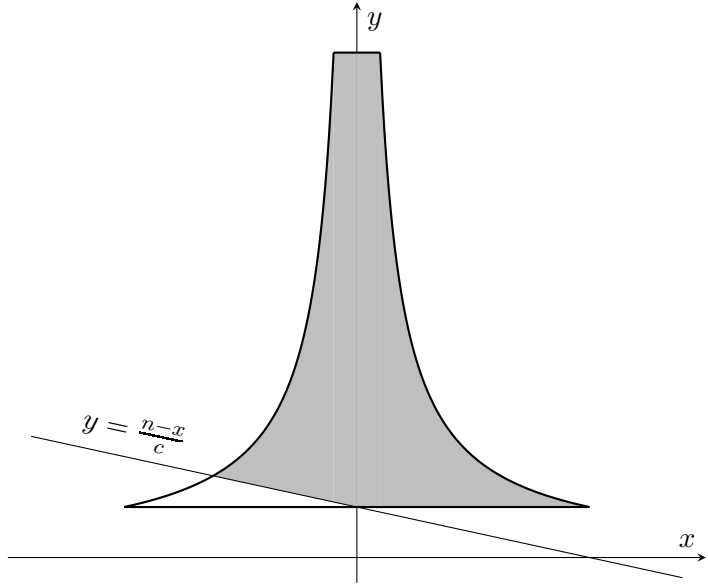
\begin{figure}[H]
			\centering
			\begin{tikzpicture}
				\begin{axis}[axis y line=middle,axis x line=middle,
					xtick=\empty,ytick=\empty,
					ymin=-0.5, ymax=11,
					ylabel=$y$, 
					xmin=-1.5,xmax=1.5,xlabel=$x$,
					samples=100,
					width=0.69\textwidth
					]
					\addplot [name path=minuesPT,black,thick,domain=-1:-0.1] {-1/x};
					\addplot [name path=boundary,black,thick,domain=0.1:1] {1/x};
					\addplot [name path =T,black,thick,domain=-0.1:0.1] {10};
					\addplot [name path=1,black,thick,domain=-1:1] {1};
					\addplot [name path = y2boundary,black,thin, domain =-1.4:1.4] {1-x} node[above,sloped,pos=0.15] {$y=\frac{n-x}{c}$};
					\addplot [lightgray] fill between[
					of = y2boundary and minuesPT, soft clip={domain=-0.618:-0.1}
					];
					\addplot [lightgray] fill between[
					of = y2boundary and T, soft clip={domain=-0.1:0}
					];
					\addplot [lightgray] fill between[
					of = 1 and T, soft clip={domain=0:0.1}
					];
					\addplot [lightgray] fill between[
					of = 1 and boundary, soft clip={domain=0.1:1}
					];
				\end{axis}
			\end{tikzpicture} 
			\caption{The area $A_1\subset P_{T,c}$ such that any $(x,y)\in A_1$ has $L(x,n,y)$ intersecting $\partial P_{T,c}\cap \{y=1\}$. The parameters $n=c=1$ and $T=10$ are used.}
		\end{figure} 
		\item[$A_{y_2}$:] $y_2$ exists when $x<\frac{n^2}{4yc}$. Then
		\begin{gather*}
			\frac{n-\sqrt{n^2-4xyc}}{2x} = 1 \\
			\sqrt{n^2-4xyc} = n-2x \\
			n^2-4xyc = n^2-4xn + 4x^2 \\
			x=n-cy
		\end{gather*}
		Substituting this into the expression for $y_2$ gives
		\begin{equation*}
			\frac{n-\sqrt{n^2-4x\left(\frac{n-x}{c}\right)c}}{2x} = \frac{n-\left|n-2x\right|}{2x} = \begin{cases}
				1 & x\leq \frac{n}{2} \\
				1-\frac{n}{x} & x> \frac{n}{2}
			\end{cases}
		\end{equation*}
		The point with $x$-coordinate $\frac{n}{2}$ on the line $x=n-cy$ has $y$-coordinate $\frac{n}{2c}$, and $\left(\frac{n}{2},\frac{n}{2c}\right) \in P_{T,c}$ if and only if $n=2c$, corresponding to the point $(c,1)\in\partial P_{T,c}$. 
		The line $x=n-cy$ intersects $x=\frac{c}{y}$ at $y = \frac{n\pm \sqrt{n^2-4c^2}}{2c}$, which only exist when $n\geq 2c$.
		\begin{gather*}
			\frac{n-\sqrt{n^2-4xyc}}{2x} = T \\
			\sqrt{n^2-4xyc} = n-2Tx \\
			n^2-4xyc = n^2- 4Txn + 4T^2x^2 \\
			x = \frac{Tn-cy}{T^2} 
		\end{gather*}
		Substituting this into the expression for $y_2$ gives 
		\begin{equation*}
			\frac{n-\sqrt{n^2-4x\left(\frac{Tn-T^2x}{c}\right)}c}{2x} = \frac{n-\left|n-2Tx\right|}{2x} = \begin{cases}
				T & x\leq \frac{n}{2T} \\
				\frac{n}{x} -T &x \geq \frac{n}{2T}
			\end{cases}
		\end{equation*}
		The point with $x$-coordinate $\frac{n}{2T}$ on the line $x=\frac{Tn-cy}{T^2}$ has $y$-coordinate $\frac{nT}{2c}$. Since $\frac{\partial}{\partial x}y_2\geq 0$, and for $n\leq 2c$, this integral is of the form
		\begin{align*}
			\iint\limits_{A_{y_2}} \frac{n-\sqrt{n^2-4xyc}}{2xy}\,dx\,dy =&\int_1^\frac{nT}{2c} \int_{\max\left\{n-cy,-\frac{c}{y}\right\}}^{\frac{n^2}{4yc}}  \frac{n-\sqrt{n^2-4xyc}}{2xy}\,dx\,dy  \\&+ \int_\frac{nT}{2c}^T \int_{-\frac{c}{y}}^{\frac{Tn-cy}{T^2}} \frac{n-\sqrt{n^2-4xyc}}{2xy} \,dx\,dy
		\end{align*}
		If $n>2c$, then $\left(\frac{n}{2T},\frac{nT}{2c}\right) \notin P_{T,c}$, so the integral is of the form
		\begin{equation*}
			\iint\limits_{A_{y_2}} \frac{n-\sqrt{n^2-4xyc}}{2xy}\,dx\,dy =\int_{\frac{n+\sqrt{n^2-4c^2}}{2c}}^T \int_{\max\left\{n-cy,-\frac{c}{y}\right\}}^{\frac{c}{y}}  \frac{n-\sqrt{n^2-4xyc}}{2xy}\,dx\,dy  
		\end{equation*}
		This integral is only valid for $n\leq\frac{c(T^2+1)}{T}$.
		\begin{figure}[H]
			\centering
			\begin{tikzpicture}
				\begin{axis}[axis y line=middle,axis x line=middle,
					xtick=\empty,ytick=\empty,
					ymin=-1, ymax=11,
					ylabel=$y$, 
					xmin=-1.5,xmax=1.5,xlabel=$x$,
					samples=100,
					width=0.69\textwidth
					]
					\addplot [name path=minusPT,black,thick,domain=-1:-0.1] {-1/x};
					\addplot [name path=boundary,black,thick,domain=0.1618:1] {1/x};
					\addplot [black,thick,domain=0.1:0.1618] {1/x};
					\addplot [name path =T,black,thick,domain=-0.1:0.1] {10};
					\addplot [name path=1,black,thick,domain=-1:1] {1};
					\addplot [name path = n24yc,black,thin,domain=0:1.4] {1/(4*x)} node[above,sloped,pos=0.98] {$y=\frac{n^2}{4xc}$};
					\addplot [name path=y3boundary,black,thin,domain= 0:0.115] {10-100*x} node[right,pos=0.91] {$y=\frac{Tn-T^2x}{c}$};
					\addplot [name path = y2boundary,black,thin, domain =-1.4:1.4] {1-x} node[above,sloped,pos=0.15] {$y=\frac{n-x}{c}$};
					\addplot [lightgray] fill between[
					of = y3boundary and 1, soft clip = {domain=0:0.09}
					];
					\addplot [lightgray] fill between[
					of = T and y2boundary, soft clip = {domain = -0.1:0}
					];
					\addplot [lightgray] fill between[
					of = minusPT and y2boundary, soft clip = {domain = -0.618:-0.1}
					];
					\addplot [lightgray] fill between[
					of = n24yc and 1, soft clip = {domain = 0.05:0.25}
					];
				\end{axis}
			\end{tikzpicture} 
			\caption{The area $A_{y_2}\subset P_{T,c}$ such that any $(x,y)\in A_{y_2}$ has $1\leq y_2 \leq T$. The parameters $n=c=1$ and $T=10$ are used.}
		\end{figure} 
		\item[$A_{y_3}$:] $y_3$ exists when $x\neq 0$, $x<\frac{n^2}{4yc}$ and $y_3>0$ only when $x>0$. Then 
		\begin{gather*}
			\frac{n+\sqrt{n^2-4xyc}}{2x} = 1 \\
			\sqrt{n^2-4xyc}=2x-n \\
			n^2-4xyc = 4x^2-4xn+n^2 \\
			x = n-cy
		\end{gather*}
		Substituting this into the expression for $y_3$ gives
		\begin{gather*}
			\frac{n+\sqrt{n^2-4x\left(\frac{n-x}{c}\right)c}}{2x} = \frac{n+\left|n-2x\right|}{2x} = \begin{cases}
				\frac{n}{x}-1 & x\leq\frac{n}{2} \\
				1 & x>\frac{n}{2}
			\end{cases}
		\end{gather*}
		On the other hand,
		\begin{gather*}
			\frac{n+\sqrt{n^2-4xyc}}{2x} = T \\
			\sqrt{n^2-4xyc} = 2Tx-n \\
			n^2-4xyc = n^2 - 4Tnx + 4T^2x^2 \\
			x = \frac{Tn-cy}{T^2}
		\end{gather*}
		and substituting this into the expression for $y_3$ gives
		\begin{gather*}
			\frac{n+\sqrt{n^2-4x\left(\frac{Tn-T^2x}{c}\right)c}}{2x} = \frac{n+\left|n-2Tx\right|}{2x} = \begin{cases}
				\frac{n}{x}-T & x \leq \frac{n}{2T}\\
				T & x > \frac{n}{2T}
			\end{cases}
		\end{gather*}
		When $n\leq 2c$, this integral is of the form 
		\begin{equation*}
			\iint\limits_{A_{y_3}}\frac{n+\sqrt{n^2-4xyc}}{2xy}\,dx\,dy=\int_1^{\frac{nT}{2c}} \int_{\frac{Tn-cy}{T^2}}^{\frac{n^2}{4yc}} \frac{n+\sqrt{n^2-4xyc}}{2xy} \,dx\,dy
		\end{equation*}
		\begin{figure}[H]
			\centering
			\begin{tikzpicture}
				\begin{axis}[axis y line=middle,axis x line=middle,
					xtick=\empty,ytick=\empty,
					ymin=-1, ymax=11,
					ylabel=$y$, 
					xmin=-1.5,xmax=1.5,xlabel=$x$,
					samples=100,
					width=0.69\textwidth
					]
					\addplot [black,thick,domain=-1:-0.1] {-1/x};
					\addplot [name path=boundary,black,thick,domain=0.1618:1] {1/x};
					\addplot [black,thick,domain=0.1:0.1618] {1/x};
					\addplot [black,thick,domain=-0.1:0.1] {10};
					\addplot [name path=1,black,thick,domain=-1:1] {1};
					\addplot [name path=Tboundary,black,thin,domain= 0:0.115] {10-100*x} node[right,pos=0.91] {$y=\frac{Tn-T^2x}{c}$};
					\addplot [name path = n24yc,black,thin,domain=0:1.4] {1/(4*x)} node[above,sloped,pos=0.985] {$y=\frac{n^2}{4xc}$};
					\addplot [lightgray] fill between[
					of = Tboundary and n24yc, soft clip = {domain=0.05:9/100}
					];
					\addplot [lightgray] fill between[
					of = n24yc and 1, soft clip = {domain=9/100:0.25}
					];
				\end{axis}
			\end{tikzpicture}
			\caption{The area $A_{y_3}\subset P_{T,c}$ such that any $(x,y)\in A_{y_3}$ has $1\leq y_3 \leq T$. The parameters $n=c=1$ and $T=10$ are used.}
		\end{figure}  
	\noindent When $n>2c$, the line $x = \frac{Tn-cy}{T^2}$ intersects $x=\frac{c}{y}$ at $y=\frac{T(n-\sqrt{n^2-4c^2})}{2c}$, so the integral becomes
		\begin{equation*}
			\iint\limits_{A_{y_3}}\frac{n+\sqrt{n^2-4xyc}}{2xy}\,dx\,dy= \int_{1}^{\frac{T(n-\sqrt{n^2-4c^2})}{2c}} \int_{\frac{Tn-cy}{T^2}}^{\frac{c}{y}} \frac{n+\sqrt{n^2-4xyc}}{2xy}\,dx\,dy
		\end{equation*}
		This integral is only valid for $n\leq\frac{c\left(T^2+1\right)}{T}$.
		\item[$A_{y_4}$:] $y_4$ is only positive when $x>0$.
		\begin{gather*}
			\frac{n+\sqrt{n^2+4xyc}}{2x} = 1\\
			\sqrt{n^2+4xyc} = 2x - n \\
			n^2+4xyc = 4x^2 - 4xn + n^2 \\
			x = n+cy
		\end{gather*}
		For $c,n>0$, this line does not intersect $P_{T,c}$.
		\begin{gather*}
			\frac{n+\sqrt{n^2+4xyc}}{2x}=T \\
			\sqrt{n^2+4xyc} = 2Tx - n \\
			n^2+4xyc = 4T^2x^2 - 4Txn + n^2 \\
			x= \frac{Tn+yc}{T^2}
		\end{gather*}
		Substituting this into the expression for $y_4$ gives
		\begin{equation*}
			\frac{n+\sqrt{n^2+4x\left(\frac{xT^2-Tn}{c}\right)c}}{2x} = \frac{n+\left|n-2xT\right|}{2x} = \begin{cases}
				T & x\geq \frac{n}{2T}\\
				\frac{n}{x}-T & x < \frac{n}{2T}
			\end{cases}
		\end{equation*}
		The line $x= \frac{Tn+cy}{T^2}$ intersects the line $x=\frac{c}{y}$ at the point $(\frac{n+\sqrt{n^2+4c^2}}{2T}, \frac{T(-n+\sqrt{n^2+4c^2})}{2c})$. Therefore this integral is of the form
		\begin{equation*}
			\iint_{A_{y_4}}\frac{n+\sqrt{n^2+4xyc}}{2xy} \,dx\,dy=\int_1^{\frac{T(-n+\sqrt{n^2+4c^2})}{2c}} \int_{\frac{nT+yc}{T^2}}^{\frac{c}{y}} \frac{n+\sqrt{n^2+4xyc}}{2xy} \,dx\,dy
		\end{equation*}
		This integral is only valid for $n\leq\frac{c(T^2-1)}{T}$.
		\begin{figure}[H]
			\centering
			\begin{tikzpicture}
				\begin{axis}[axis y line=middle,axis x line=middle,
					xtick=\empty,ytick=\empty,
					ymin=-0.5, ymax=11,
					ylabel=$y$, 
					xmin=-1.5,xmax=1.5,xlabel=$x$,
					samples=100,
					width=0.69\textwidth
					]
					\addplot [black,thick,domain=-1:-0.1] {-1/x};
					\addplot [name path=boundary,black,thick,domain=0.1618:1] {1/x};
					\addplot [black,thick,domain=0.1:0.1618] {1/x};
					\addplot [black,thick,domain=-0.1:0.1] {10};
					\addplot [name path=1,black,thick,domain=-1:1] {1};
					\addplot [name path = y4boundary,black,thin,domain=0:0.3] {100*x-10} node[right,pos=0.6] {$y=\frac{T^2x-Tn}{c}$};
					\addplot [lightgray] fill between[
					of = y4boundary and 1, soft clip = {domain=0.11:0.1618}
					];
					\addplot [lightgray] fill between[
					of = boundary and 1, soft clip = {domain=0.1618:1}
					];
				\end{axis}
			\end{tikzpicture}
			\caption{The area $A_{y_4}\subset P_{T,c}$ such that any $(x,y)\in A_{y_4}$ has $1\leq y_4 \leq T$. The parameters $n=c=1$ and $T=10$ are used.}
		\end{figure} 
		\item[$A_T$:] If $n\leq 2c$, then the integral is 
		\begin{equation*}
			\iint\limits_{A_T} \frac{T}{y}\,dx\,dy = \int_1^T \int_{\frac{Tn-cy}{T^2}}^{\min\{\frac{c}{y},\frac{Tn+cy}{T^2}\}} \frac{T}{y} \,dx\,dy
		\end{equation*}
		otherwise,
		\begin{equation*}
			\iint\limits_{A_T} \frac{T}{y}\,dx\,dy = \int_1^{\frac{T(n-\sqrt{n^2-4c^2})}{2c}} \int_{\frac{Tn-cy}{T^2}}^{\min\{\frac{c}{y},\frac{Tn+cy}{T^2}\}} \frac{T}{y} \,dx\,dy
		\end{equation*}
		This integral is only valid for $n\leq \frac{c(T^2+1)}{T}$.
	\end{enumerate}
	\begin{figure}[H]
		\centering
		\begin{tikzpicture}
			\begin{axis}[axis y line=middle,axis x line=middle,
				xtick=\empty,ytick=\empty,
				ymin=-1, ymax=11,
				ylabel=$y$, 
				xmin=-1.5,xmax=1.5,xlabel=$x$,
				samples=100,
				width=0.69\textwidth
				]
				\addplot [black,thick,domain=-1:-0.1] {-1/x};
				\addplot [name path = posPT, black,thick,domain=0.1:1] {1/x};
				\addplot [name path = T, black,thick,domain=-0.1:0.1] {10} ;
				\addplot [name path = 1, black,thick,domain=-1:1] {1};
				\addplot [name path = y4boundary,black,thin,domain=0:0.3] {100*x-10} node[right,pos=0.6] {$y=\frac{T^2x-Tn}{c}$};
				\addplot [name path = Tboundary,black,thin,domain=-0.07:15/100] {10-100*x} node[right,pos=0.8] {$y=\frac{Tn-T^2x}{c}$};
				\addplot [name path = Tsmall,black,thin,domain=0.09:0.1] {10};
				\addplot [name path = 1small,black,thin,domain=0.09:0.1] {1};
				\addplot [lightgray] fill between[of = Tboundary and T, soft clip = {domain=0:0.0905}];
				\addplot [lightgray] fill between[of = Tsmall and 1small, soft clip = {domain=0.09:0.1}];
				\addplot [lightgray] fill between[of = 1 and posPT, soft clip = {domain=0.1:0.112}];
				\addplot [lightgray] fill between[of = posPT and y4boundary, soft clip = {domain=11/100:0.1618}];
			\end{axis}
		\end{tikzpicture} 
		\caption{The area $A_T\subset P_{T,c}$ such that any $(x,y)\in A_T$ has $L(n,x,y)$ intersecting $P_{T,c}\cap \{y=T\}$. The parameters $n=c=1$ and $T=10$ are used.}
	\end{figure}
	\subsection{Calculating the Integrals} The following calculations have been verified using Wolfram Mathematica.
	\begin{enumerate}
		\item[$A_1$]	\begin{enumerate}
			\item[] Case $n\leq 2c$
			\begin{gather*}
				\int_1^T \int_{\max \left\{-\frac{c}{y},n-cy\right\}}^{\frac{c}{y}} \frac{1}{y} \,dx\,dy \\
				=- \frac{2c}{T} + \sqrt{n^2+4c^2}+n \log \left(\frac{\sqrt{n^2+4c^2}-n}{2c}\right)\\
				=O\left(1\right)
			\end{gather*} 
			\item[] Case $n>2c$
			\begin{gather*}
				\int_{\frac{n+\sqrt{n^2-4c^2}}{2c}}^T \int_{\max \left\{-\frac{c}{y},n-cy\right\}}^{\frac{c}{y}} \frac{1}{y} \,dx\,dy \\
				= -\frac{2c}{T} + \sqrt{n^2+4c^2}-  \sqrt{n^2-4c^2} + n\log \left( \frac{\sqrt{n^2+4c^2}-n}{n-\sqrt{n^2-4c^2}} \right)
			\end{gather*}
		\end{enumerate}
		\item[$A_{y_2}$] \begin{enumerate}
		\item[] Case $n\leq 2c$
		\begin{gather*}
			\int_1^\frac{nT}{2c} \int_{\max\left\{n-cy,-\frac{c}{y}\right\}}^{\frac{n^2}{4yc}}  \frac{n-\sqrt{n^2-4xyc}}{2xy}\,dx\,dy +\int_{\frac{nT}{2c}}^T \int_{-\frac{c}{y}}^{\frac{Tn-cy}{T^2}}  \frac{n-\sqrt{n^2-4xyc}}{2xy}\,dx\,dy\\
			=  \sqrt{n^2+4c^2}-\sqrt{n^2-4c^2}+\frac{n}{2}\log\left(n+\sqrt{n^2-4c^2}\right)^2 - \frac{n}{2}\log\left(n+\sqrt{n^2+4c^2}\right)^2 \\- \left(n+\sqrt{n^2-4c^2}-n\log\left(n+\sqrt{n^2-4c^2}\right)\right)\log\left(\frac{n+\sqrt{n^2+4c^2}}{n+\sqrt{n^2-4c^2}}\right) \\+ \left(\sqrt{n^2+4c^2}-\sqrt{n^2-4c^2}+n\log\left(\frac{n+\sqrt{n^2-4c^2}}{n+\sqrt{n^2+4c^2}}\right)\right)\log\left(\frac{T\left(\sqrt{n^2+4c^2}-n\right)}{2c}\right) \\
			=O\left(\log T\right)
		\end{gather*}
		\item[] Case $n>2c$
		\begin{gather*}
			\int_{\frac{n+\sqrt{n^2-4c^2}}{2c}}^T \int_{\max\left\{n-cy,-\frac{c}{y}\right\}}^{\frac{c}{y}} \frac{n-\sqrt{n^2-4xyc}}{2xy}   \,dx\,dy\\
			= \left(n\log\left(n+\sqrt{n^2-4c^2}\right) -\sqrt{n^2-4c^2}-n\right) \log \left(\frac{n+\sqrt{n^2+4c^2}}{n+\sqrt{n^2-4c^2}}\right) \\- \frac{n}{2}\log^2\left( n+\sqrt{n^2+4c^2}\right)  +\frac{n}{2}\log^2\left( n+\sqrt{n^2-4c^2}\right) + \left(\sqrt{n^2+4c^2}-\sqrt{n^2-4c^2}\right) \\+\left( n\log\left(\frac{n+\sqrt{n^2-4c^2}}{n+\sqrt{n^2+4c^2}}\right) +\sqrt{n^2+4c^2}- \sqrt{n^2-4c^2}\right)\left(\log T - \log\left(\frac{n+\sqrt{n^2+4c^2}}{2c}\right) \right) 
		\end{gather*}
	\end{enumerate}
		\item[$A_{y_3}$] 
		\begin{enumerate}
			\item[] Case $n\leq 2c$
			\begin{align*}
				&\int_{1}^{\frac{nT}{2c}} \int_{\frac{Tn-cy}{T^2}}^{\frac{n^2}{4yc}} \frac{n+\sqrt{n^2-4xyc}}{2xy}\,dx\,dy \\
				=&\frac{n}{2} \log^2\left(\frac{nT}{2c}\right) + O\left(\log T\right)
			\end{align*}
			\item[] Case $n>2c$
			\begin{align*}
				&\int_{1}^{\frac{T\left(n-\sqrt{n^2-4c^2}\right)}{2c}} \int_{\frac{Tn-cy}{T^2}}^{\frac{c}{y}} \frac{n+\sqrt{n^2-4xyc}}{2xy}\,dx\,dy \\
				=& \left(n \log \left(n-\sqrt{n^2-4c^2}\right) + \sqrt{n^2-4c^2} - n \right)\log \left(\frac{T(n-\sqrt{n^2-4c^2})}{2c}\right) \\&- \frac{n}{2}\log^2\left(n-\sqrt{n^2-4c^2}\right) +n -\sqrt{n^2-4c^2} + \frac{n}{2}\log^2\left( \frac{2c}{T}\right) - \frac{2c}{T} \\
			\end{align*}
		\end{enumerate}
	\item[$A_{y_4}$] 
	\begin{align*}
		&\int_{1}^{\frac{T(\sqrt{n^2+4c^2}-n)}{2c}} \int_{\frac{nT+yc}{T^2}}^{\frac{c}{y}} \frac{n+\sqrt{n^2+4xyc}}{2xy}\,dx\,dy \\
		=& \left(n\log\left(\sqrt{n^2+4c^2}-n\right) +\sqrt{n^2+4c^2}-n \right) \log \left(\frac{T(\sqrt{n^2+4c^2}-n)}{2c}\right) \\&+n - \sqrt{n^2+4c^2} - \frac{1}{2}n\log^2\left(\sqrt{n^2+4c^2}-n\right) +\frac{2c}{T}+ \frac{n}{2}\log^2\left(\frac{2c}{T}\right) \\
		=& \frac{n}{2}\log^2\left(\frac{2c}{T}\right) + O\left(\log T\right)
	\end{align*}
		\item[$A_T$] \begin{enumerate}
			\item[] Case $n\leq 2c$ 
			\begin{align*}
				&\int_1^T \int_{\frac{Tn-cy}{T^2}}^{\min\left\{\frac{c}{y},\frac{Tn+cy}{T^2}\right\}} \frac{T}{y}  \,dx\,dy \\
				=& \sqrt{n^2+4c^2} - \frac{2c}{T} + n\log\left(\frac{\sqrt{n^2+4c^2}-n}{2c}\right) \\
				=& O(1) 
			\end{align*}
			\item[] Case $n>2c$
			\begin{align*}
				&\int_1^{\frac{T\left(n-\sqrt{n^2-4c^2}\right)}{2c}} \int_{\frac{Tn-cy}{T^2}}^{\min\left\{\frac{c}{y},\frac{Tn+cy}{T^2}\right\}} \frac{T}{y}  \,dx\,dy \\
				=& \sqrt{n^2+4c^2}-\sqrt{n^2-4c^2} - \frac{2c}{T} + n\log \left(\frac{\sqrt{n^2+4c^2}-n}{n-\sqrt{n^2-4c^2}}\right)
			\end{align*}
		\end{enumerate}
	\end{enumerate}
	We now calculate $\iint\limits_{P_{T,c}}\left| \mathcal{I}_{(x,y)}^n(P_{T,c})\right| \,dx\,dy$ using equation (\ref{decomp}) and the calculations of the previous section.
	
	When $n\leq2c$ we can bound this integral by
		\begin{gather*}
			O\left( 1\right) + \frac{n}{2}\log^2\left(\frac{2c}{T}\right) + O\left(\log T\right) - \frac{n}{2} \log^2\left(\frac{nT}{2c}\right) + O\left(\log T\right) + O\left(\log T\right) - O(1) \\
			= O\left(\log T \right)
		\end{gather*}
	When $n>2c$, the summand in this case is equal to
		\begin{gather*}
			\log T \Bigg( n \log \left(\frac{\left(\sqrt{n^2+4c^2}-n\right)\left(n+\sqrt{n^2-4c^2}\right)}{\left(n+\sqrt{n^2+4c^2}\right)\left(n-\sqrt{n^2-4c^2}\right)}\right)+ 2\left( \sqrt{n^2+4c^2}-\sqrt{n^2-4c^2}\right) \Bigg) \\
			+ n\left(1+\log \left(2c\right)\right) \log \left( \frac{\left(n+\sqrt{n^2-4c^2}\right)\left(n-\sqrt{n^2-4c^2}\right)}{\left(\sqrt{n^2+4c^2}-n\right)\left(n+\sqrt{n^2+4c^2}\right)} \right) \\
			+ \sqrt{n^2+4c^2} \log \left( \frac{\sqrt{n^2+4c^2}-n}{\sqrt{n^2+4c^2}+n}\right) - \sqrt{n^2-4c^2} \log \left(\frac{n-\sqrt{n^2-4c^2}}{n+\sqrt{n^2-4c^2}}\right)+ \frac{4c}{T}  \\
			+ \frac{n}{2} \Bigg(\log^2\left(\sqrt{n^2+4c^2}-n\right)+\log^2\left(\sqrt{n^2+4c^2}+n\right)\\-\log^2\left(n+\sqrt{n^2-4c^2}\right)-\log^2\left(n-\sqrt{n^2-4c^2}\right) \Bigg)
		\end{gather*}
		Notice that
		\begin{equation*}
			\frac{\left(n+\sqrt{n^2-4c^2}\right)\left(n-\sqrt{n^2-4c^2}\right)}{\left(\sqrt{n^2+4c^2}-n\right)\left(n+\sqrt{n^2+4c^2}\right)} = 1
		\end{equation*}
		so the summand is equal to
		\begin{gather*}
			\log T \Bigg( n \log \left(\frac{\left(\sqrt{n^2+4c^2}-n\right)\left(n+\sqrt{n^2-4c^2}\right)}{\left(n+\sqrt{n^2+4c^2}\right)\left(n-\sqrt{n^2-4c^2}\right)}\right)+ 2\left( \sqrt{n^2+4c^2}-\sqrt{n^2-4c^2}\right) \Bigg) \\
			+ \sqrt{n^2+4c^2} \log \left( \frac{\sqrt{n^2+4c^2}-n}{\sqrt{n^2+4c^2}+n}\right) - \sqrt{n^2-4c^2} \log \left(\frac{n-\sqrt{n^2-4c^2}}{n+\sqrt{n^2-4c^2}}\right) + \frac{4c}{T}\\
			+ \frac{n}{2} \Bigg(\log^2\left(\sqrt{n^2+4c^2}-n\right)+\log^2\left(\sqrt{n^2+4c^2}+n\right)\\-\log^2\left(n+\sqrt{n^2-4c^2}\right)-\log^2\left(n-\sqrt{n^2-4c^2}\right) \Bigg)
		\end{gather*}
	\subsection{Calculating the Second Moment}
	Recall that
	\begin{equation*}
		\norm{\widetilde{\mathbbm{1}}_{P_{T,c}}}_2^2 = \frac{1}{\zeta(2)}\left(\textnormal{area}(P_{T,c}) + \sum_{n\neq 0} \frac{\varphi(\left|n\right|)}{|\left|n\right|} \iint\limits_{P_{T,c}}\left| \mathcal{I}_{(x,y)}^n(P_{T,c})\right| \,dx\,dy\right)
	\end{equation*}
	Since $|\mathcal{I}_{(x,y)}^n(P_{T,c})| =|\mathcal{I}_{(x,y)}^{-n}(P_{T,c})|$, we can rewrite the sum as
	\begin{equation*}\label{simp}
		\sum_{n\neq 0} \frac{\varphi(\left|n\right|)}{|\left|n\right|} \iint\limits_{P_{T,c}}\left| \mathcal{I}_{(x,y)}^n(P_{T,c})\right| \,dx\,dy =2 \sum_{n\in \N\setminus\{0\}} \frac{\varphi(\left|n\right|)}{\left|n\right|} \iint\limits_{P_{T,c}}\left| \mathcal{I}_{(x,y)}^n(P_{T,c})\right| \,dx\,dy
	\end{equation*}
	Therefore,
	\begin{equation}\label{simplified2norm}
		\norm{\widetilde{\mathbbm{1}}_{P_{T,c}}}_2^2 = \frac{1}{\zeta(2)}\textnormal{area}(P_{T,c}) + \frac{2}{\zeta(2)}\sum_{n\in \N\setminus\{0\}} \frac{\varphi(\left|n\right|)}{\left|n\right|} \iint\limits_{P_{T,c}}\left| \mathcal{I}_{(x,y)}^n(P_{T,c})\right| \,dx\,dy
	\end{equation}
	We split the sum in (\ref{simplified2norm}) into the cases $n\leq 2c$ and $n>2c$.
	\begin{gather*}
		\sum_{n\in \N\setminus\{0\}} \frac{\varphi(\left|n\right|)}{\left|n\right|} \iint\limits_{P_{T,c}}\left| \mathcal{I}_{(x,y)}^n(P_{T,c})\right| \,dx\,dy \\= \sum_{0<n\leq2c: n \in \N} \frac{\varphi(\left|n\right|)}{\left|n\right|} \iint\limits_{P_{T,c}}\left| \mathcal{I}_{(x,y)}^n(P_{T,c})\right| \,dx\,dy + \sum_{2c<n: n \in \N} \frac{\varphi(\left|n\right|)}{\left|n\right|} \iint\limits_{P_{T,c}}\left| \mathcal{I}_{(x,y)}^n(P_{T,c})\right| \,dx\,dy
	\end{gather*}
	Then the first of these sums can be bounded as
	\begin{gather*}
		\sum_{0<n\leq2c: n \in \N} \frac{\varphi(\left|n\right|)}{\left|n\right|} \iint\limits_{P_{T,c}}\left| \mathcal{I}_{(x,y)}^n(P_{T,c})\right| \,dx\,dy \\
		\leq C+\sum_{n=1}^{\lceil 2c \rceil} \frac{\varphi(n)}{n} O_c\left(\log T\right) \\
		= O_c (\log T)
	\end{gather*}
	The second sum is as follows.
	\begin{gather*}
		\sum_{n=\lceil 2c \rceil }^{\lfloor cT + \frac{c}{T}\rfloor} \frac{\varphi(n)}{|n|} \Bigg[\log T  \Bigg(  n \log \left(\frac{\left(\sqrt{n^2+4c^2}-n\right)\left(n+\sqrt{n^2-4c^2}\right)}{\left(n+\sqrt{n^2+4c^2}\right)\left(n-\sqrt{n^2-4c^2}\right)}\right)\\ + \frac{4c}{T}+ 2\left( \sqrt{n^2+4c^2}-\sqrt{n^2-4c^2}\right) \Bigg) + \sqrt{n^2+4c^2} \log \left( \frac{\sqrt{n^2+4c^2}-n}{n+\sqrt{n^2+4c^2}}\right) \\
		+ \sqrt{n^2-4c^2} \log \left(\frac{n+\sqrt{n^2-4c^2}}{n-\sqrt{n^2-4c^2}}\right)+ \frac{n}{2} \Bigg(\log^2\left(\sqrt{n^2+4c^2}-n\right)+\log^2\left(\sqrt{n^2+4c^2}+n\right)\\-\log^2\left(n+\sqrt{n^2-4c^2}\right)-\log^2\left(n-\sqrt{n^2-4c^2}\right) \Bigg)\Bigg] - \epsilon_{y_4}
	\end{gather*}
	Here, $\epsilon_{y_4}$ is equal to $\iint\limits_{A_{y_4}} \frac{y_4}{y} \, dx\, dy$ evaluated at $n = \lfloor cT+\frac{c}{T}\rfloor$ and appears in the expression since this particular integral does not validly contribute to the value of $\norm{\widetilde{\mathbbm{1}}_{P_{T,c}}}_2^2$ for this value of $n$ (as shown above, it is only valid for $n\leq\frac{c(T^2-1)}{T}$), hence we deduct its value from this sum. We may calculate, for $n=\lfloor cT+\frac{c}{T}\rfloor$, that
	\begin{equation*}
		\lim_{T\rightarrow\infty} \iint\limits_{A_{y_4}} \frac{y_4}{y} \, dx\, dy = 0
	\end{equation*}
	hence $\epsilon_{y_4}\rightarrow 0$. Since $\epsilon_{y_4}$ does not diverge as $T\rightarrow\infty$, we will ignore it when calculating the growth of the second sum.
	\subsection{Power Series for the Summands}
	We now wish to find suitable power series for each of the summands of the second sum which will allow us to calculate its asymptotic growth. Using 
	\begin{equation*}
		\sqrt{z+1} = \sum_{k=0}^\infty \begin{pmatrix} \frac{1}{2}\\ k\end{pmatrix} \frac{1}{z^{k-\frac{1}{2}}} \, , \quad x>1
	\end{equation*}
	\begin{equation*}
		\sqrt{z-1} = \sum_{k=0}^\infty (-1)^k\begin{pmatrix} \frac{1}{2}\\ k\end{pmatrix} \frac{1}{z^{k-\frac{1}{2}}} \, , \quad x>1
	\end{equation*}
	we calculate:
	\begin{equation*}
		\sqrt{n^2-4c^2} =  2c\cdot \sqrt{\frac{n^2}{4c^2}-1}  = \sum_{k=0}^\infty (-1)^k\begin{pmatrix}
			\frac{1}{2} \\ k
		\end{pmatrix} \frac{4^kc^{2k}}{n^{2k-1}}\, , \quad n>2c
	\end{equation*}
	\begin{equation*}
		\sqrt{n^2+4c^2} = 2c\cdot \sqrt{\frac{n^2}{4c^2}+1} = \sum_{k=0}^\infty\begin{pmatrix}
			\frac{1}{2} \\ k
		\end{pmatrix}  \frac{4^kc^{2k}}{n^{2k-1}} \, , \quad n>2c
	\end{equation*}
	To find a power series for the $\log\left(\sqrt{n^2+4c^2}-n\right)$, consider
	\begin{equation*}
		\frac{\partial }{\partial n} \left( \log \left(\sqrt{n^2+4c^2}-n\right)- \log\left( \frac{2c^2}{n}\right) \right)= \frac{1}{n} - \frac{1}{\sqrt{n^2+4c^2}}
	\end{equation*}
	Recall that
	\begin{equation}\label{Laurent1}
		\frac{1}{\sqrt{1-x^2}}= \sum_{k=0}^\infty \begin{pmatrix} 2k \\k\end{pmatrix} \frac{1}{4^k} \frac{1}{x^{2k+1}}\, , \quad x>1
	\end{equation}
	\begin{equation}\label{Laurent2}
		\frac{1}{\sqrt{1+x^2}}= \sum_{k=0}^\infty \begin{pmatrix} 2k \\k\end{pmatrix} \frac{(-1)^k}{4^k} \frac{1}{x^{2k+1}}\, , \quad x>1
	\end{equation}
	To calculate the power series for $n\log\left(\sqrt{n^2+4c^2}-n\right)$, notice that
	\begin{equation*}
		\frac{\partial }{\partial n} \log\left( \frac{\sqrt{n^4+4c^2n^2}-n^2}{2c^2} \right) = \frac{1}{n}-\frac{1}{\sqrt{n^2+4c^2}}
	\end{equation*}
	By equation (\ref{Laurent2}), 
	\begin{equation*}
		\frac{1}{n}-\frac{1}{\sqrt{n^2+4c^2}} = \frac{2c^2}{n^3}-\frac{6c^4}{n^5}+\frac{20c^6}{n^7} - \frac{70c^8}{n^9} \cdots
	\end{equation*}
	Integrating with respect to $n$ yields
	\begin{equation*}
		\log\left( \frac{\sqrt{n^4+4c^2n^2}-n^2}{2c^2} \right) = \lim_{n\rightarrow\infty}\log\left( \frac{\sqrt{n^4+4c^2n^2}-n^2}{2c^2} \right) - \frac{c^2}{n^2}+\frac{3c^4}{2n^4} - \frac{10c^6}{3n^6} + \frac{35c^8}{4n^8}-\cdots
	\end{equation*}
	Since $\lim_{n\rightarrow\infty}\log\left( \frac{\sqrt{n^4+4c^2n^2}-n^2}{2c^2} \right)=0$, rearranging this equation and multiplying by $n$ gives the power series
	\begin{equation*}
		n \log\left(\sqrt{n^2+4c^2}-n\right) = n\log\left(\frac{2c^2}{n}\right)- \frac{c^2}{n}+\frac{3c^4}{2n^3} - \frac{10c^6}{3n^5} + \frac{35c^8}{4n^7}- \cdots
	\end{equation*}
	Similarly, using (\ref{Laurent1}) and (\ref{Laurent2}),
	\begin{equation*}
		n\log\left(n+\sqrt{n^2-4c^2} \right) = \log\left(2n\right) - \frac{c^2}{n}-\frac{3c^4}{2n^3}-\frac{10c^6}{3n^5}-\frac{35c^8}{4n^7}+\cdots \, , \quad n>2c
	\end{equation*}
	\begin{equation*}
		n\log\left(n+\sqrt{n^2+4c^2}\right) = \log\left(2n\right) + \frac{c^2}{n}-\frac{3c^4}{2n^3}+\frac{10c^6}{3n^5}-\frac{35c^8}{4n^7}+\cdots \, , \quad n>2c
	\end{equation*}
	\begin{equation*}
		n\log\left( n-\sqrt{n^2-4c^2}\right) = \log\left(\frac{2c^2}{n}\right) +\frac{c^2}{n}+\frac{3c^4}{2n^3}+\frac{10c^6}{3n^5}+\frac{35c^8}{4n^7}+\cdots \, , \quad n>2c
	\end{equation*}
	Using these equations, we find the following power series:
	\begin{equation*}
		n\log\left(\frac{(\sqrt{n^2+4c^2}-n)(n+\sqrt{n^2-4c^2})}{(n+\sqrt{n^2+4c^2})(n-\sqrt{n^2-4c^2})}\right)= -\frac{4c^2}{n}-\frac{40c^6}{3n^5}+\cdots
	\end{equation*}
	\begin{equation*}
		\sqrt{n^2+4c^2}-\sqrt{n^2-4c^2}= \frac{4c^2}{n}+\frac{8c^6}{n^5}+\cdots
	\end{equation*}
	\begin{equation*}
		\sqrt{n^2+4c^2} \log \left( \frac{\sqrt{n^2+4c^2}-n}{n+\sqrt{n^2+4c^2}}\right) = 2n\left(\log c - \log n \right) + \frac{4c^2\left(\log c -\log n - \frac{1}{2}\right)}{n}+\cdots
	\end{equation*}
	\begin{equation*}
		\sqrt{n^2-4c^2} \log \left(\frac{n+\sqrt{n^2-4c^2}}{n-\sqrt{n^2-4c^2}}\right) = -2n\left(\log c - \log n \right)+ \frac{4c^2\left(\log c - \log n - \frac{1}{2}\right)}{n}+\cdots
	\end{equation*}
	\begin{gather*}
		\frac{n}{2}\Bigg( \log^2\left(\sqrt{n^2+4c^2}-n\right)+\log^2\left(\sqrt{n^2+4c^2}+n\right)\\-\log^2\left(n+\sqrt{n^2-4c^2}\right)-\log^2\left(n-\sqrt{n^2-4c^2}\right) \Bigg) = \frac{4c^2(\log n -\log c)}{n} + \frac{80c^6\log n}{3n^6} + \cdots
	\end{gather*}	
	Substituting these into the sum gives the following bound on the second sum.
	\begin{gather*}
		\sum_{n=\lceil 2c \rceil }^{\lfloor cT + \frac{c}{T}\rfloor} \frac{\varphi(n)}{|n|} \Bigg[\log T   \Bigg\{  \left(-\frac{4c^2}{n}-\frac{40c^6}{3n^5} + \cdots\right) + 
		2\left(\frac{4c^2}{n}+\frac{8c^6}{n^5} + \cdots \right) \Bigg\} 
		\\+  \left(2n\left(\log c - \log n \right) + \frac{4c^2\left(\log c -\log n - \frac{1}{2}\right)}{n}+\cdots\right)\\
		+\left(-2n\left(\log c - \log n \right)+ \frac{4c^2\left(\log c - \log n - \frac{1}{2}\right)}{n}+\cdots\right) \\+\frac{4c}{T} 
		+ \left(\frac{4c^2(\log n -\log c)}{n} + \frac{80c^6\log n}{3n^6} + \cdots\right)   \Bigg]-\epsilon_{y_4}
	\end{gather*}
	\begin{gather*}
		=\sum_{n=\lceil 2c \rceil }^{\lfloor cT + \frac{c}{T}\rfloor} \frac{\varphi(n)}{n} \Bigg[ \frac{4c^2\log T}{n} - \frac{4c^2\log n }{n} + \frac{4c^2(\log c -1)}{n} + \cdots \Bigg] -\epsilon_{y_4}\\
		= \sum_{n=\lceil 2c \rceil }^{\lfloor cT + \frac{c}{T}\rfloor} \frac{\varphi(n)}{n} \frac{4c^2}{n} \log \left(\frac{T}{n}\right) + O\left(\log T\right)
	\end{gather*}
	We will make use of the following theorem.
	\begin{theorem}[{{\cite[Theorem 421]{hardyIntroductionTheoryNumbers2009}}}]\label{sumint}
		Let $\{c_i\}_{i=1}^\infty$ be a sequence of real numbers and $f:\R\rightarrow \R$. Suppose there exists $s\in \R$ such that $c_j=0$ whenever $j< s$ and $f(t)$ has continuous derivative at all $t\geq s$. Then
		\begin{equation*}
			\sum_{n\leq x} c_nf(n) = \left( \sum_{n\leq x} c_n \right) f(x) - \int_s^x \left( \sum_{n\leq t} c_n \right) \cdot f'(t) dt
		\end{equation*}
	\end{theorem}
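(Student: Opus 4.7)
The plan is to prove this by Abel summation by parts, the discrete analogue of integration by parts. Introduce the partial-sum step function $C(t) := \sum_{n\leq t} c_n$, which is right-continuous, vanishes for $t<s$, and is constant on each half-open interval $[n,n+1)$. The key algebraic identity is $c_n = C(n) - C(n-1)$ at integer $n$, which allows rewriting $\sum_{n\leq x} c_n f(n)$ as a telescoping expression involving first differences of $f$, which can then be converted to an integral of $f'$ using the fundamental theorem of calculus.

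Concretely, I would first reduce to the case where $s$ is a positive integer (if not, replace $s$ by $\lceil s\rceil$ without changing any of the relevant sums). Set $N = \lfloor x\rfloor$. Substituting $c_n = C(n)-C(n-1)$ into $\sum_{n=s}^N c_n f(n)$ and reindexing one of the two resulting sums by a shift of one produces the Abel identity
\begin{equation*}
\sum_{n=s}^{N} c_n f(n) \;=\; C(N)f(N) \;-\; \sum_{n=s}^{N-1} C(n)\bigl(f(n+1)-f(n)\bigr),
\end{equation*}
where the boundary contribution at the lower end disappears because $C(s-1)=0$. Using continuous differentiability of $f$ on $[s,\infty)$, I would replace each difference by $f(n+1)-f(n) = \int_n^{n+1} f'(t)\,dt$, and since $C(t)$ is the constant $C(n)$ on $[n,n+1)$, it can be absorbed into the integrand and the intervals concatenated to yield $\sum_{n=s}^{N-1} C(n)(f(n+1)-f(n)) = \int_s^N C(t)f'(t)\,dt$.

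The final step is to pass from the integer cutoff $N$ to the real cutoff $x$. On $[N,x]$ there are no jumps of $C$, so $C(t) = C(N) = C(x)$ on this interval, and therefore $\int_N^x C(t)f'(t)\,dt = C(x)(f(x)-f(N))$. Adding this to both sides and rearranging replaces $C(N)f(N)$ by $C(x)f(x)$ and extends the integral to the upper limit $x$, giving exactly the stated formula.

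There is no serious obstacle in this argument; it is routine bookkeeping. The only care needed is in three places: verifying that the boundary term at the lower limit $s$ vanishes (which is precisely why the hypothesis $c_j = 0$ for $j<s$ is imposed), choosing a consistent one-sided continuity convention for the step function $C$ so that its value at integer jumps is correctly identified when evaluating at $n$ versus $n-1$, and treating the non-integer endpoint $x$ via the auxiliary interval $[N,x]$. The assumption of a continuous derivative on $[s,\infty)$ enters only to justify writing $f(n+1)-f(n)$ as a Riemann integral of $f'$, and in fact continuous differentiability on any interval containing $[s,x]$ would suffice.
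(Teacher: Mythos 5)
Your argument is correct: the telescoping substitution $c_n = C(n)-C(n-1)$, the vanishing of the boundary term via $C(s-1)=0$, the conversion of $f(n+1)-f(n)$ to $\int_n^{n+1}f'(t)\,dt$ with $C$ constant on $[n,n+1)$, and the final passage from the integer cutoff $N=\lfloor x\rfloor$ to the real cutoff $x$ using $C(t)=C(N)$ on $[N,x]$ all check out, and the reduction to integer $s$ is harmless because $C\equiv 0$ on $[s,\lceil s\rceil)$. Note, however, that the paper offers no proof of this statement at all --- it is imported by citation as Theorem 421 of Hardy and Wright --- so there is nothing to compare against; what you have written is the standard Abel partial-summation proof, which is exactly how the cited source establishes it.
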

	We also have the following:
	\begin{theorem}[{{\cite[Chapter 3]{Walfisz}}}]\label{walf}
		\begin{equation*}
			\sum_{n=1}^N \frac{\varphi(n)}{n} = \frac{N}{\zeta(2)} + O\left(\left(\log N \right)^\frac{2}{3}\left(\log \log N\right)^\frac{4}{3}\right)
		\end{equation*}
	\end{theorem}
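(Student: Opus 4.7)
The plan is to reduce Theorem \ref{walf} to a bound on the tail of $\sum \mu(n)/n^2$, which in turn follows from Walfisz's sharp estimate for the Mertens function via the Vinogradov--Korobov zero-free region of $\zeta(s)$. First I would use the identity $\varphi(n)/n = \sum_{d \mid n} \mu(d)/d$ and swap the order of summation to obtain
\begin{equation*}
    \sum_{n=1}^N \frac{\varphi(n)}{n} = \sum_{d \leq N} \frac{\mu(d)}{d} \Big\lfloor \frac{N}{d} \Big\rfloor = N \sum_{d \leq N} \frac{\mu(d)}{d^2} - \sum_{d \leq N} \frac{\mu(d)}{d} \Big\{ \frac{N}{d} \Big\},
\end{equation*}
where the second sum is $O(\log N)$ by the trivial bound $|\mu(d)| \leq 1$.

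Next I would use the Dirichlet series identity $\sum_{d=1}^{\infty} \mu(d)/d^2 = 1/\zeta(2)$ to produce the main term,
\begin{equation*}
    N \sum_{d \leq N} \frac{\mu(d)}{d^2} = \frac{N}{\zeta(2)} - N \sum_{d > N} \frac{\mu(d)}{d^2},
\end{equation*}
reducing the problem to proving $\sum_{d > N} \mu(d)/d^2 = O\bigl((\log N)^{2/3}(\log\log N)^{4/3}/N\bigr)$. This would be established by partial summation applied to the Mertens function $M(x) = \sum_{n \leq x} \mu(n)$: writing
\begin{equation*}
    \sum_{N < d \leq X} \frac{\mu(d)}{d^2} = \frac{M(X)}{X^2} - \frac{M(N)}{N^2} + 2\int_N^X \frac{M(t)}{t^3}\,dt
\end{equation*}
and letting $X \to \infty$ converts everything into estimates on $M(t)$.

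The heart of the argument, and where I would expect the main obstacle to lie, is the analytic input bounding $M(t)$. The strength of this bound depends on a zero-free region for $\zeta(s)$: Vinogradov's method for exponential sums, via Korobov's refinement, yields a zero-free region of shape $\sigma > 1 - c/((\log|t|)^{2/3}(\log\log|t|)^{1/3})$, and a Perron-type contour shift gives
\begin{equation*}
    M(t) = O\!\bigl(t \exp\bigl(-c (\log t)^{3/5}(\log\log t)^{-1/5}\bigr)\bigr).
\end{equation*}
Substituting this into the partial summation above and optimising over the contour parameters produces precisely the exponents $2/3$ and $4/3$ in Theorem \ref{walf}. The elementary M\"obius-inversion steps are routine; the nontrivial ingredient is the Vinogradov--Korobov bound, which is very much a classical but deep result of analytic number theory. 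Since this estimate is already available in a clean form in \cite{Walfisz}, in the context of the present paper I would simply cite it rather than reconstruct the exponential-sum machinery.
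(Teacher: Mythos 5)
The paper does not prove this statement; it is quoted verbatim from Walfisz, so the only question is whether your sketch would actually establish the stated error term. It would not, and the gap is in the step you dismiss as routine. After writing
\begin{equation*}
\sum_{n\le N}\frac{\varphi(n)}{n}=N\sum_{d\le N}\frac{\mu(d)}{d^2}-\sum_{d\le N}\frac{\mu(d)}{d}\left\{\frac{N}{d}\right\},
\end{equation*}
you bound the fractional-part sum trivially by $O(\log N)$. But $(\log N)^{2/3}(\log\log N)^{4/3}=o(\log N)$, so the trivial bound is strictly \emph{weaker} than the error term you are trying to prove; your argument only recovers the classical estimate $N/\zeta(2)+O(\log N)$. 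Meanwhile the tail $N\sum_{d>N}\mu(d)/d^2$, which you identify as the heart of the matter, is in fact the easy part: even the older de la Vall\'ee Poussin type bound $M(t)=O\left(t\exp\left(-c\sqrt{\log t}\right)\right)$ already makes that contribution $o(1)$, far below the target. The exponents $2/3$ and $4/3$ do not come from optimising the Mertens tail; they come from the genuinely hard term $\sum_{d\le N}\frac{\mu(d)}{d}\left\{\frac{N}{d}\right\}$, which Walfisz handles by expanding the sawtooth $\left\{x\right\}-\frac{1}{2}$ in a Fourier series and controlling the resulting exponential sums over the M\"obius function with the Vinogradov--Korobov method. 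That is the deep input, and it sits exactly where your sketch applies only the trivial bound.

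Since the paper itself simply cites Walfisz and never reproves the theorem, citing it (as you suggest at the end) is the right move in context; but as a proof sketch of the stated bound, the argument as written cannot reach the claimed exponents.
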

	As a consequence of theorem \ref{walf},
	\begin{equation*}
		\sum_{n =\lceil 2c \rceil}^{\lfloor t \rfloor} \frac{\varphi(n)}{n} = \frac{\lfloor t\rfloor- \lceil 2c \rceil +1}{\zeta(2)} + O\left(\left(\log t \right)^\frac{2}{3}\left(\log \log t\right)^\frac{4}{3}\right)
	\end{equation*}
	Take $c_n = \frac{\varphi(n)}{n}$ for $n \geq \lceil 2c \rceil$, $c_n=0$ otherwise, and $f(n) = \frac{1}{n}\log \left( \frac{T}{n}\right)$. These satisfy the assumptions of theorem \ref{sumint}, therefore for large enough $T$,
	\begin{align}
		&\sum_{n=\lceil 2c \rceil}^{\lfloor cT+\frac{c}{T} \rfloor} \frac{\varphi(n)}{n} \frac{1}{n} \log \left(\frac{T}{n}\right)\nonumber\\ 
		=&  \left[ \frac{1}{\zeta(2)} \left( \lfloor cT+\frac{c}{T} \rfloor - \lceil 2c \rceil \right) +O\left( (\log T)^{\frac{2}{3}}(\log \log T)^\frac{4}{3}\right)\right] \frac{1}{\lfloor cT+\frac{c}{T} \rfloor}\log \left(\frac{T}{\lfloor cT+\frac{c}{T} \rfloor}\right) \nonumber\\
		&+ \int_{\lceil 2c \rceil}^{\lfloor cT+\frac{c}{T} \rfloor}\left( \frac{\lfloor t \rfloor -\lceil 2c +1 \rceil }{\zeta(2)} + O\left(\left(\log t \right)^\frac{2}{3}\left(\log \log t\right)^\frac{4}{3} \right) \right)\frac{\log \left( \frac{T}{t} \right)+1}{t^2} dt \nonumber \\
		=&\left( \frac{1}{\zeta(2)} - \frac{\lceil 2c \rceil}{\zeta(2)\lfloor cT+\frac{c}{T}\rfloor}+ \frac{O\left(\left(\log T\right)^\frac{2}{3}\left(\log \log T\right)^\frac{4}{3}\right)}{\lfloor cT + \frac{c}{T}\rfloor} \right) \log \left( \frac{T}{\lfloor cT+\frac{c}{T}\rfloor}\right) \nonumber \\
		&+ \frac{1}{\zeta(2)} \int_{\lceil 2c \rceil}^{\lfloor cT+\frac{c}{T} \rfloor} \frac{\lfloor t \rfloor}{t^2} \left( \log\left(\frac{T}{t}\right)+1\right)  \,dt- \frac{\lceil 2c+1\rceil}{\zeta(2)} \int_{\lceil 2c \rceil}^{\lfloor cT+\frac{c}{T} \rfloor} \frac{1}{t^2}\left( \log\left(\frac{T}{t}\right)+1\right) \,dt \nonumber \\
		&+ \frac{1}{\zeta(2)} \int_{\lceil 2c \rceil}^{\lfloor cT+\frac{c}{T} \rfloor} \frac{O\left(\left(\log t\right)^\frac{2}{3}\left(\log \log t\right)^\frac{4}{3}\right)}{t^2}\left( \log\left(\frac{T}{t}\right)+1\right)  \,dt \nonumber \\
		:=& \frac{1}{\zeta(2)} \int_{\lceil 2c \rceil}^{\lfloor cT+\frac{c}{T}\rfloor} \frac{\lfloor t \rfloor}{t^2} \log\left(\frac{T}{t}\right) dt + E'(T) \nonumber \\
		:=&  \frac{1}{\zeta(2)}\int_{1}^T \frac{\log \left(\frac{T}{t}\right)}{t} dt +E''(T)+E'(T)\nonumber\\
		=& \frac{1}{2\zeta(2)}  \log^2(T) +E''(T)+E'(T) \label{finalcalc}
	\end{align}
	Let $E(T):=E''(T)+E'(T)$. Then equations (\ref{simplified2norm}) and (\ref{finalcalc}) imply
	\begin{align*}
		&\int_{X^2} \left(\widetilde{\mathbbm{1}}_{P_{T,c}} - \frac{\textnormal{area}(P_{T,c})}{\zeta(2)}\right)^2\,d\mu \\=& \norm{\widetilde{\mathbbm{1}}_{P_{T,c}}}_2^2 - \frac{4c^2}{\zeta(2)} \log^2(T) \\
		=& \frac{1}{\zeta(2)}\left(\text{area}(P_{T,c}) + 2\sum_{n\in \N}\frac{\varphi(|n|)}{|n|}\iint\limits_{P_{T,c}}\bigg|\mathcal{I}_{(x,y)}^n(P_{T,c}) \bigg|\,dx\,dy\right) - \frac{4c^2}{\zeta(2)}\log^2(T) \\
		=& O\left(\log T\right)+\frac{2}{\zeta(2)}\frac{4c^2}{2\zeta(2)}\log^2(T)  +E(T)- \frac{4c^2}{\zeta(2)^2} \log^2(T)  \\
		=& O(\log(T) + E(T))
	\end{align*}
	It remains to calculate the asymptotic growth of $E(T)$.
	\begin{align*}
		E(T) =& \frac{1}{\zeta(2)}\left(\left( 1- \frac{\lceil 2c \rceil}{ \lfloor cT+\frac{c}{T} \rfloor}\right) \log \left(\frac{T}{\lfloor cT+\frac{c}{T}\rfloor}\right)\right) + O\left(T^{-1}(\log T)^{\frac{2}{3}}(\log \log T)^\frac{4}{3}\right)\\
		&+ \int_{\lceil 2c \rceil}^{\lfloor cT+\frac{c}{T} \rfloor}\Bigg[ O\left( \frac{\log\left(\frac{T}{t}\right)}{t^2} \right) + O\left(t^{-2} \left(\log t \right)^\frac{2}{3} \left(\log \log t\right)^\frac{4}{3} \right) \\ &\qquad\qquad\qquad+ O\left( t^{-2} \left(\log t\right)^\frac{5}{3} \left( \log \log t\right)^\frac{4}{3} \right)+ O\left( t^{-2}\right)\Bigg]dt \\
		=& O(1) + O\left( \frac{1}{T}\right)+ O\left(T^{-1}(\log T)^{\frac{2}{3}}(\log \log T)^\frac{4}{3}\right) + O\left( \frac{\log T}{T} \right) + O\left(\frac{(\log T)^\frac{10}{3}}{T}\right) \\
		=& O(1)
	\end{align*}
	Therefore $O\left(E(T)\right) +O\left(\log(T)\right) = O\left(\log T\right)$. 
	
	Since $\int_{X^2} \left(\widetilde{\mathbbm{1}}_{P_{T^N,c}} - \frac{\textnormal{area}(P_{T^N,c})}{\zeta(2)}\right)^2\,d\mu=O(N)$, a similar argument to theorem \ref{main} yields, for any measurable $A\subset \mathbb{S}^0$,
	\begin{equation*}
		\frac{\widetilde{\mathbbm{1}}_{P_{\tau,c,A}}(\Lambda)}{\widetilde{\mathbbm{1}}_{P_{\tau,c}}(\Lambda)} = \text{vol}(A)  +o_{A,c,\Lambda,d}\big( (\log \tau)^{-\frac{1}{2}}(\log \log \tau)^\frac{3}{2}(\log\log\log\tau)^{\frac{1}{2}+\epsilon}\big)
	\end{equation*}
	\begin{remark}
		Let $\mathcal{H}_1(0)$ be the stratum of all unit area translation surfaces with one marked point. Then $\mathcal{H}_1(0)\cong \SL(2,\R)/\SL(2,\Z)$. Given a unimodular lattice $\Lambda \subset \C \cong \R^2$, we may find a translation surface $(X,\omega) \in \mathcal{H}_1(0)$ for which $\Lambda$ is its set of holonomy vectors. The number of saddle connections with holonomy vector lying in $P_{T,c}$ can then be counted using the result of section \ref{2d}, giving an error term for theorem 1.1 of \cite{athreyaErgodicTheoryDiophantine2016} in the case of the stratum $\mathcal{H}(0)$. See \cite{hasselblattHandbookDynamicalSystems2002,zorichFlatSurfaces2006a} for more details on translation surfaces.
	\end{remark}
	\section{Further Results}\label{further}
	\subsection{Effective Counting for Linear Forms}
	Recall the definition of $R_{T,c}$ from section \ref{SoR} and the calculation $\textnormal{vol}_d(R_{T,c}) = cB_mC_n\log T$.
	\begin{proof}[Proof of theorem \ref{effective2}]
		Define the flow $g_t:\R \rightarrow X_d$ by
		\begin{equation*}
			g_t=\begin{pmatrix}
				e^{\frac{n}{m}t} \text{Id}_m & 0 \\
				0 & e^{-t} \text{Id}_n
			\end{pmatrix} 
		\end{equation*}
		Then by Moore's Ergodicity Theorem \cite[Theorem 3]{mooreErgodicityFlowsHomogeneous1966}, this flow is ergodic on $X_d$. For $T\geq 1$ and for any $N\in\N$,
		\begin{equation*}
			\sum_{i=0}^{N-1} \widehat{\mathbbm{1}}_{R_{T,c}}\left( g_{\log T}^i \Lambda\right) = \widehat{\mathbbm{1}}_{R_{T^{N},c}}(\Lambda)
		\end{equation*}
		By lemma \ref{lemma}, 
		\begin{align*}
			\int_{X_{d}} \sum_{i=0}^{N-1} \widehat{\mathbbm{1}}_{R_{T,c}}\left( g_{\log T}^i \Lambda\right)\, d\mu &\leq \left( \int_{\R^d} \mathbbm{1}_{R_{T^{N},c}}(\mathbf{x}) d\mathbf{x}\right)^2 + O_{d}\left(\text{vol}_d(R_{T^{N},c})\right) \\
			&\leq c^2k^2B_m^2C_n^2 \log^2 T + O_d(\log T)
		\end{align*}
		Therefore,
		\begin{align*}
			&\int_{X_d} \left(\sum_{i=0}^{N-1} \widehat{\mathbbm{1}}_{R_{T,c}}\left( g_{\log T}^i \Lambda\right) - \text{vol}_{d}\left(R_{T^{N},c}\right)\right)^2 \,d\mu(\Lambda) \\
			=& \int_{X_d}\left( \widehat{\mathbbm{1}}_{R_{T^N,c}}\left( \Lambda\right)\right)^2\,d\mu(\Lambda) - 2\text{vol}_d(R_{T,c}) \int_{X_d}\widehat{\mathbbm{1}}_{R_{T^N,c}}\left(\Lambda\right)\,d\mu(\Lambda) + \text{vol}_d(R_{T^{N},c})^2 \\
			\leq& N^2c^2B_m^2C_n^2 \log^2 T + O_d(\log T^{N}) -  N^2c^2B_m^2C_n^2 \log^2 T  \\=& O_{T,d}(N) 
		\end{align*}
		The result follows by theorem \ref{Gaposhkin} similarly to the proof of theorem \ref{main}.
	\end{proof}
	\subsection{Effective Counting for Affine Lattices}
	To calculate the second moment of a function over the space of affine lattices, we use the following lemma.
	\begin{lemma}[{{\cite[Proposition 14]{el-bazDistributionDirectionsAffine2013}}}]\label{EBMV}
		Let $d\geq 2$ be a natural number and $F\in L^1(\R^{2d})$. Then
		\begin{equation*}
			\int_{Y_d}\sum_{\mathbf{v}\neq\mathbf{w}\in\Lambda+\mathbf{\xi}} F(\mathbf{v},\mathbf{w})d\nu(\Lambda,\nu) = \int_{\R^{2d}}F(\mathbf{x})d\mathbf{x}
		\end{equation*}
	\end{lemma}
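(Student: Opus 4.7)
The plan is to prove Lemma \ref{EBMV} by an unfolding argument combined with Siegel's mean value theorem (Theorem \ref{SMVT}). First, I would parametrise pairs $(\mathbf{v},\mathbf{w})$ of distinct points of the affine lattice $\Lambda+\xi$ by writing $\mathbf{v}=\mathbf{u}+\xi$ and $\mathbf{w}=\mathbf{u}+\mathbf{z}+\xi$ where $\mathbf{u}\in\Lambda$ and $\mathbf{z}:=\mathbf{w}-\mathbf{v}\in\Lambda$; the constraint $\mathbf{v}\neq\mathbf{w}$ is precisely $\mathbf{z}\neq\mathbf{0}$. This transforms the inner double sum into a sum over $\mathbf{u}\in\Lambda$ and $\mathbf{z}\in\Lambda\setminus\{\mathbf{0}\}$.

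Next, I would exploit the identification $Y_d\cong \SL_d(\R)/\SL_d(\Z)\ltimes \R^d/\Lambda$ under which the Haar measure disintegrates as $d\nu(\Lambda,\xi)=d\mu(\Lambda)\,d\xi$, with $\xi$ ranging over a fundamental domain $\mathcal{F}_\Lambda$ of Lebesgue measure $1$ (by unimodularity of $\Lambda$). Applying Fubini, the left-hand side of the lemma equals
\begin{equation*}
\int_{X_d}\int_{\mathcal{F}_\Lambda}\sum_{\mathbf{u}\in\Lambda}\sum_{\mathbf{z}\in\Lambda\setminus\{\mathbf{0}\}} F(\mathbf{u}+\xi,\mathbf{u}+\xi+\mathbf{z})\,d\xi\,d\mu(\Lambda).
\end{equation*}
The crucial unfolding observation is that as $\mathbf{u}$ ranges over $\Lambda$ and $\xi$ over $\mathcal{F}_\Lambda$, the point $\mathbf{y}:=\mathbf{u}+\xi$ sweeps out all of $\R^d$ bijectively (up to a null set) with Lebesgue measure. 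So after interchanging sum and integral the display becomes $\int_{X_d}\widehat{G}(\Lambda)\,d\mu(\Lambda)$, where
\begin{equation*}
G(\mathbf{z}):=\int_{\R^d}F(\mathbf{y},\mathbf{y}+\mathbf{z})\,d\mathbf{y},
\end{equation*}
and $\widehat{G}$ is the ordinary Siegel transform of $G$ on $X_d$ (note that $G(\mathbf{0})$ does not appear since we restricted to $\mathbf{z}\neq\mathbf{0}$).

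Finally, I would apply Siegel's mean value theorem (Theorem \ref{SMVT}) to obtain $\int_{X_d}\widehat{G}(\Lambda)\,d\mu(\Lambda)=\int_{\R^d}G(\mathbf{z})\,d\mathbf{z}$, and then the substitution $\mathbf{w}=\mathbf{y}+\mathbf{z}$ (at fixed $\mathbf{y}$) converts this to $\int_{\R^{2d}}F(\mathbf{y},\mathbf{w})\,d\mathbf{y}\,d\mathbf{w}=\int_{\R^{2d}}F(\mathbf{x})\,d\mathbf{x}$, which is the desired identity. The main obstacle is the bookkeeping needed to justify the interchanges of sums and integrals: the cleanest route is to run the entire calculation first with $|F|$ in place of $F$, which produces a chain of equalities terminating in $\int_{\R^{2d}}|F|<\infty$ by hypothesis; this yields absolute convergence everywhere and licenses Fubini for the general signed or complex-valued $F$. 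Apart from this integrability check, the argument is a direct reduction to Siegel's theorem in dimension $d$.
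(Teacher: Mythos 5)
The paper does not prove this lemma at all: it is imported verbatim as Proposition 14 of El-Baz--Marklof--Vinogradov, so there is no internal proof to compare against. Your argument is a correct and self-contained substitute, and it is the standard one for Siegel-type identities on the space of affine lattices: reparametrise distinct pairs by $(\mathbf{u},\mathbf{z})$ with $\mathbf{z}=\mathbf{w}-\mathbf{v}\in\Lambda\setminus\{\mathbf{0}\}$, disintegrate $\nu$ as $d\mu(\Lambda)\,d\xi$ over the torus fibre $\R^d/\Lambda$, unfold the $\mathbf{u}$-sum against the $\xi$-integral to replace the fundamental domain by all of $\R^d$, and reduce to Siegel's mean value theorem applied to $G(\mathbf{z})=\int_{\R^d}F(\mathbf{y},\mathbf{y}+\mathbf{z})\,d\mathbf{y}$, which lies in $L^1(\R^d)$ with $\norm{G}_1\leq\norm{F}_1$. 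The only points worth making explicit are (i) that the identity holds with constant exactly $1$ only because the ``natural Haar measure'' $\nu$ is normalised so that the fibre over each $\Lambda$ is the unit-covolume torus and $\mu$ is the probability Haar measure on $X_d$ --- this is a normalisation convention rather than a gap, but it is where the constant comes from; and (ii) your device of running the computation first with $|F|$ is exactly what is needed to license the interchanges via Tonelli and then Fubini. What your approach buys over the paper's bare citation is a proof that visibly reduces the affine second-moment formula to the ordinary first-moment Siegel theorem already stated as Theorem \ref{SMVT}, making the section self-contained; it also makes transparent why no ``diagonal'' or arithmetic correction terms appear in the affine case, in contrast to Rogers' formula on $X_d$.
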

	\begin{proof}[proof of theorem \ref{effective3}]
		Using theorems \ref{SMVT} and lemma \ref{EBMV}, we deduce that, for any $f\in L^1(\R^d)$,
		\begin{equation*}
			\int_{Y_d}\left(\widehat{f}(\Lambda+\xi) \right)^2d\nu(\Lambda,\mathbf{\xi}) = \left(\int_{\R^d}f(\mathbf{x})d\mathbf{x}\right)^2 + \int_{\R^d}f(\mathbf{x})^2d\mathbf{x}
		\end{equation*}
		By setting $f=\mathbbm{1}_B$, we have
		\begin{equation*}
			\int_{Y_d} \widehat{\mathbbm{1}}_B(\Lambda+\mathbf{\xi})d\nu(\Lambda,\mathbf{\xi}) = \left(\textnormal{vol}_d(B)\right)^2 + \textnormal{vol}_d(B)
		\end{equation*}
	The result then follows using a similar argument to theorem \ref{effective2}.
		\end{proof} 
	\bibliographystyle{abbrv}
	\typeout{}
	\bibliography{./Bibliography}

\begin{thebibliography}{10}

\bibitem{athreyaSpiralingApproximationsSpherical2014}
J.~S. Athreya, A.~Ghosh, and J.~Tseng.
\newblock {{Spiraling of approximations and spherical averages of Siegel
  transforms}}.
\newblock {\em Journal of the London Mathematical Society}, 91(2):383--404,
  2015.

\bibitem{athreyaErgodicTheoryDiophantine2016}
J.~S. Athreya, A.~Parrish, and J.~Tseng.
\newblock {{Ergodic Theory and {{Diophantine}} Approximation for Translation
  Surfaces and Linear Forms}}.
\newblock {\em Nonlinearity}, 29(8):2173--2190, Aug. 2016.

\bibitem{bekkaErgodicTheoryTopological2000}
M.~B. Bekka and M.~Mayer.
\newblock {\em Ergodic {{Theory}} and {{Topological Dynamics}} of {{Group
  Actions}} on {{Homogeneous Spaces}}.}
\newblock {Cambridge University Press}, {Cambridge}, 2000.

\bibitem{Dirichlet}
L.~Dirichlet.
\newblock {{Verallgemeinerung eines Satzes aus der Lehre von den
  Kettenbr{\"u}chen nebst einigen Anwendungen auf die Theorie der Zahlen}}.
\newblock {\em SB Preuss. Akad. Wiss}, 1842:93--95, 1842.

\bibitem{el-bazDistributionDirectionsAffine2013}
D.~El-Baz, J.~Marklof, and I.~Vinogradov.
\newblock {The Distribution of Directions in an Affine Lattice: Two-Point
  Correlations and Mixed Moments}.
\newblock {\em International Mathematics Research Notices}, 2015(5):1371--1400.

\bibitem{gaposhkinDependenceConvergenceRate1982}
V.~F. Gaposhkin.
\newblock On the {{Dependence}} of the {{Convergence Rate}} in the {{Strong
  Law}} of {{Large Numbers}} for {{Stationary Processes}} on the {{Rate}} of
  {{Decay}} of the {{Correlation Function}}.
\newblock {\em Theory of Probability \& Its Applications}, 26(4):706--720, Jan.
  1982.

\bibitem{hardyIntroductionTheoryNumbers2009}
G.~H. Hardy, E.~M. Wright, D.~R. {Heath-Brown}, and J.~H. Silverman.
\newblock {\em An {{Introduction}} to the Theory of Numbers}.
\newblock {Posts \& Telecom Press}, {P\'ekin}, 2009.

\bibitem{kachurovksiiRateCongergenceErgodic}
A.~Kachurovskii.
\newblock {{The rate of convergence in ergodic theorems}}.
\newblock {\em Russian Mathematical Surveys}, 51:653--703, 1996.

\bibitem{kleinbockChapter11Dynamics2002}
D.~Y. Kleinbock, N.~Shah, and A.~Starkov.
\newblock {Dynamics of Subgroup Actions on Homogeneous Spaces of Lie Groups and
  Applications to Number Theory}.
\newblock In {\em Handbook of {{Dynamical Systems}}}, volume~1, chapter~11,
  pages 813--930. {Elsevier}, 2002.

\bibitem{kleinbockPointwiseEquidistributionError2017}
D.~Y. Kleinbock, R.~Shi, and B.~Weiss.
\newblock Pointwise equidistribution with an error rate and with respect to
  unbounded functions.
\newblock {\em Mathematische Annalen}, 367(1-2):857--879, Feb. 2017.

\bibitem{kleinbockDynamicalBorelCantelliLemma2020}
D.~Y. Kleinbock and S.~Yu.
\newblock A dynamical {{Borel}}-{{Cantelli}} lemma via improvements to
  {{Dirichlet}}'s theorem.
\newblock {\em Moscow Journal of Combinatorics and Number Theory},
  9(2):101--122, Februrary 2020.

\bibitem{macbeathSiegelMeanValue1958}
A.~M. Macbeath and C.~A. Rogers.
\newblock {Siegel's Mean Value Theorem in the Geometry of Numbers}.
\newblock {\em Mathematical Proceedings of the Cambridge Philosophical
  Society}, 54(2):139--151, Apr. 1958.

\bibitem{hasselblattHandbookDynamicalSystems2002}
H.~Masur.
\newblock {{Ergodic Theory of Translation Surfaces}}.
\newblock In B.~Hasselblatt and A.~B. Katok, editors, {\em Handbook of
  Dynamical Systems}. {N.H. North Holland : Elsevier}, {Amsterdam; New York},
  2002.

\bibitem{mooreErgodicityFlowsHomogeneous1966}
C.~C. Moore.
\newblock Ergodicity of {{Flows}} on {{Homogeneous Spaces}}.
\newblock {\em American Journal of Mathematics}, 88(1):154, Jan. 1966.

\bibitem{newmanSmithNormalForm1997}
M.~Newman.
\newblock {The {{Smith}} Normal Form}.
\newblock {\em Linear Algebra and its Applications}, 254(1-3):367--381, Mar.
  1997.

\bibitem{rogersMeanValuesSpace1955}
C.~A. Rogers.
\newblock {Mean Values over the Space of Lattices}.
\newblock {\em Acta Mathematica}, 94(0):249--287, 1955.

\bibitem{schmidtMETRICALTHEOREMGEOMETRY}
W.~Schmidt.
\newblock {{A Metrical Theorem In Geometry of Numbers}}.
\newblock {\em Transactions of the American Mathematical Society},
  95(3):516--529, 1960.

\bibitem{schmidtMetricalTheoremDiophantine1960}
W.~Schmidt.
\newblock A {{Metrical Theorem}} in {{Diophantine Approximation}}.
\newblock {\em Canadian Journal of Mathematics}, 12:619--631, 1960.

\bibitem{siegelMeanValueTheorem1945}
C.~L. Siegel.
\newblock A {{Mean Value Theorem}} in {{Geometry}} of {{Numbers}}.
\newblock {\em The Annals of Mathematics}, 46(2):340, Apr. 1945.

\bibitem{Walfisz}
A.~Walfisz.
\newblock {\em {{Weylsche Exponentialsummen in der neueren Zahlentheorie}}}.
\newblock VEB Deutscher Verlag der Wissenschaften, Berlin, 1963.

\bibitem{zorichFlatSurfaces2006a}
A.~Zorich.
\newblock Flat {{Surfaces}}.
\newblock In {\em Frontiers in Number Theory, Physics, and Geometry I}, pages
  437 -- 583. Springer, Berlin, 2006.

\end{thebibliography}
\end{document}